\documentclass[reqno]{amsart}
 \newtheorem{thm}{Theorem}[section]
 
 \newtheorem{lem}[thm]{Lemma}
 
 \theoremstyle{definition}
 
 \theoremstyle{remark}
 \newtheorem{rem}[thm]{Remark}
 
 \numberwithin{equation}{section}

\usepackage{verbatim}
\usepackage{xcolor}
\usepackage{graphicx}
\usepackage{amssymb, amsmath, amsthm}
\usepackage{amsfonts}
\usepackage{amssymb}
\usepackage{float}
\usepackage{hyperref}
\usepackage{mathrsfs}
\usepackage{enumitem}

\newtheorem{theorem}{Theorem}

\newtheorem{lemma}[theorem]{Lemma}

\newcommand{\N}{\mathbb N}

\newcommand{\R}{\mathbb R}

\newcommand{\sgn}{\text{\rm sign}}

\newcommand{\PV}{\texttt{PV}}

\numberwithin{equation}{section}
\numberwithin{theorem}{section}

\numberwithin{figure}{section}
\usepackage{scalerel,stackengine}
\stackMath
\newcommand\reallywidehat[1]{%
\savestack{\tmpbox}{\stretchto{%
  \scaleto{%
    \scalerel*[\widthof{\ensuremath{#1}}]{\kern.1pt\mathchar"0362\kern.1pt}%
    {\rule{0ex}{\textheight}}
  }{\textheight}%
}{2.4ex}}%
\stackon[-6.9pt]{#1}{\tmpbox}%
}
\begin{document}

%
%
%
%
%
%
%
%
%

\title[Generalised 3D Muskat problem]
 {Existence, asymptotic behaviour and convergence of a generalised 3D Muskat problem in stable regime}

\author{Qasim Khan}

\address{Department of Mathematics and Information Technology\\
The Education University of Hong Kong}

\email{qasimkhan@s.eduhk.hk}

\author{Anthony Suen}

\address{Department of Mathematics and Information Technology\\
The Education University of Hong Kong}

\email{acksuen@eduhk.hk}

\author{Bao Quoc Tang}

\address{Department of Mathematics and Scientific Computing \\
University of Graz, Graz, Austria}

\email{quoc.tang@uni-graz.at}


\date{}

\keywords{Muskat problem, incompressible porous media equation, active scalar equations.}

\subjclass{76D03, 35Q35, 76W05}

\begin{abstract}
We address a generalised three-dimensional $\alpha$-Muskat model that comes from the fluid interface problem given by two incompressible fluids with different densities in the stable regime. We establish local-in-time wellposedness when $\alpha\in[0,1)$ and also prove global-in-time existence for strong solutions when $\alpha\in[0,\frac{1}{2})$ with initial data controlled by explicit constants. We obtain maximum principles for the $L^{\infty}$-norms of both the solutions and their gradients, and we further acquire the corresponding decay rates of these $L^{\infty}$-norms. Finally, some convergence results for strong solutions as $\alpha\to0^+$ are also proved. 
\end{abstract}

\maketitle
\section{Introduction}\label{introduction}

This paper studies a generalised model for a free boundary problem in three dimensional space. Such model is motivated by the original Muskat problem which describes the evolution of the interface between two fluids under the influence of gravity and capillarity \cite{Musk37}. More precisely, we address the following generalised system for modelling the motion of fluids:
\begin{align}
\rho_t + \nabla \cdot  (u\rho) &= 0, \label{general abstract active scalar eqn nondiffusive modified 1}\\
u + \nabla P &= - (0,0,(-\Delta)^\frac{\alpha}{2}\rho), \label{general abstract active scalar eqn nondiffusive modified 2}\\
\nabla \cdot u &= 0 \label{general abstract active scalar eqn nondiffusive modified 3},
\end{align}
where $u$ is the velocity, $P$ is the pressure, $\rho$ is the density and $\alpha\in[0,1)$ is a non-negative parameter. Equations \eqref{general abstract active scalar eqn nondiffusive modified 1} and \eqref{general abstract active scalar eqn nondiffusive modified 3} are respectively the conservation of mass and incompressibility condition on the fluid, while equation \eqref{general abstract active scalar eqn nondiffusive modified 2} is known as the Darcy's law when $\alpha=0$. 

Denoting the interface
that separates the space in two domains $\Omega_1$ and $\Omega_2$ by $x_3 = f (x_1, x_2, t)$, we consider the density $\rho = \rho(x_1, x_2, x_3, t)$ to be the following step function
\begin{equation}\label{step function for rho}
\rho(x, x_1, x_2, x_3, t) =
\begin{cases}
\rho_1 , & x \in \Omega_1(t) = \{ x_3 > f(x_1, x_2, t) \} \\
\rho_2 , & x \in \Omega_2(t) = \{ x_3 < f(x_1, x_2, t) \},
\end{cases}
\end{equation}
where $\rho_1$ and $\rho_2$ are positive constant densities. By applying the curl operator twice and making use of the incompressibility of the fluid, we have ${\rm curl }\,{\rm curl } u = -\Delta u$. {Hence we can express the velocity in terms of the density by the equation
\begin{align}\label{eqn between velocity and density}
u[\rho]=u = (-\partial_{x_1}\Lambda^{-2+\alpha}\partial_{x_3}\rho, -
\partial_{x_2}\Lambda^{-2+\alpha}\partial_{x_3}\rho,
(\partial_{x_1}^2+\partial_{x_2}^2)\Lambda^{-2+\alpha}\rho),
\end{align}
with $\Lambda=(-\Delta)^\frac{1}{2}$. Applying the double-curl
vector identity to \eqref{general abstract active scalar eqn nondiffusive modified 2} yields 
$$-\Delta u = \nabla(\nabla \cdot F_\rho) - \Delta
F_\rho,$$ 
where $F_\rho = (0,0,-\Lambda^\alpha \rho)$.} Then the interface $f$ satisfies the following contour equation (see \cite{CG07} for a detailed derivation for the case $\alpha=0$), which we refer it as {\it generalised $\alpha$-Muskat problem} or simply {\it generalised Muskat problem}:
\begin{equation}\label{contour eqn}
\begin{split}
\frac{\partial f(x,t)}{\partial t} &= \frac{\rho_2 - \rho_1}{4\pi} \, \PV \, \int_{\mathbb{R}^2} \frac{ \left( \nabla f(x,t) - \nabla f(x - y, t) \right) \cdot y }{ \left[ |y|^2 + (f(x,t) - f(x - y, t))^2 \right]^{\frac{3 + \alpha}{2}}} \, dy, \\
f(x,0) &= f_0(x), \quad x=(x_1, x_2) \in \mathbb{R}^2.
\end{split}
\end{equation}
Here $\PV$ denotes the principle value. Throughout this work, we shall assume that the generalised Muskat problem is in the {\it stable regime}, in the sense that the denser fluid is below the other fluid. More precisely, the constant densities $\rho_1$ and $\rho_2$ satisfy
\begin{align}\label{stable case assumption}
\rho_2>\rho_1,
\end{align}
and for simplicity, we choose $(\rho_2 - \rho_1)/4\pi=1$ as in \eqref{contour eqn} unless otherwise stated. 

When $\alpha=0$, the system \eqref{general abstract active scalar eqn nondiffusive modified 1}-\eqref{general abstract active scalar eqn nondiffusive modified 3} describes the dynamics of the interface in between two incompressible fluids in porous media in three dimensional space, in particular it was used for addressing the encroachment of water into oil in a porous medium \cite{Musk37}. There is an extensive literature on the topic of Muskat problem and we shall give a brief review on the relevant results. When the surface tension is taken into account via the Laplace-Young condition, it was shown by Escher and Simonett \cite{ES97} that there exist classical solutions to the Muskat problem in the two-dimensional case. Later in \cite{EM11}, Escher and Matioc addressed the Muskat problem in a periodic geometry
and incorporate capillary and gravity in the setting, and they obtained local well-posedness for H\"{o}lder solution. Escher et al. \cite{EMV12} conducted a similar study for the case of three interfaces by reducing the problem to an abstract evolution equation. In the scenario where there is no surface tension, Siegel, Caflisch and Howison \cite{SCH04} proved that for the two-dimensional case, the problem is ill-posed when the higher-viscosity fluid contracts. Conversely, they established the global-in-time existence of solutions for small initial data when the higher-viscosity fluid expands. In \cite{AM04}, Ambrose proved short-time well-posedness for the problem with two fluids under the condition that the initial data must satisfy a sign condition which is a generalisation of a condition of Saffman and Taylor \cite{ST58}.

The free boundary problems given by fluids with different constant densities $\rho_1$ and $\rho_2$ as in \eqref{step function for rho} in the case $\alpha  = 0$ have also been extensively studied by various researchers. In the stable regime for which $\rho_1>\rho_2$, C\'ordoba and Gancedo \cite{CG07, cordoba2009maximum} proved that the Muskat problem is well-posed and obtained local-in-time existence and uniqueness for it. In the work \cite{constantin2016muskat}, Constantin et al. showed global-in-time existence for strong and weak solutions with initial data $f_0$ in \eqref{contour eqn} controlled by explicit constants, and the results were later extended by Gancedo et al. \cite{GGPS19} to the more general case with density-viscosity jump. Moreover in \cite{GL22}, Gancedo and Lazar showed that the three-dimensional problem is globally well-posed in the critical Sobolev space $\dot{H}^2\cap\dot{W}^{1,\infty}$ with arbitrary large slope, and by applying the modulus of continuity techniques from \cite{KNV07} for quasi-geostrophic equation, Cameron et al. \cite{CCH26} recently obtained local well-posedness results in any dimension for initial data within the space $W^{1,\infty}$. In \cite{FGSV12}, Friedlander et al. considered a singularly modified version of the two-dimensional Muskat problem and showed that the system has a unique local-in-time patch-type solution $\rho$ given by \eqref{step function for rho} with a corresponding smooth interface $f\in L^\infty(0,T;H^s(\R))$ for $s\ge4$ and $T>0$. In the unstable regime $\rho_1>\rho_2$, it was shown in \cite{SCH04} that in the two-dimensional case, there are singular solutions that start off with smooth initial data but develop a point of infinite curvature at finite time, hence the problem is ill-posed in the sense that singularities can form in
an arbitrarily short time for arbitrarily small initial data in terms of Sobolev norm. In \cite{CG07}, for the three-dimensional setting, the problem is ill-posed that there exists a solution $f$ such that its $H^s$ norm with $s>\frac{3}{2}$ blows up in finite time. Such solution was constructed using the rescaled global solution for the two-dimensional stable case. In particular, C\'ordoba et al. \cite{CGJ17} showed that there exist solutions of the Muskat problem which shift stability regimes in the following sense: they start stable, then become unstable, and finally return back to the stable regime. Castro, C\'ordoba and Faraco \cite{CCF21} later proved the existence of mixing solutions of the incompressible porous media equation for
all Muskat type $H^5$ initial data in the fully unstable regime.

There is also a considerably rich literature dedicated to the study of the regularity of solutions for the Muskat problem. Using the Cauchy-Kowalewski method, Castro, C\'ordoba and Gancedo \cite{CCG10} showed that there exist analytic solutions to the problem provided that the initial data are analytic functions. In terms of critical regularity, for the two-dimensional case, Alazard and Nguyen \cite{AN23} obtained well-posedness on the endpoint Sobolev space of $L^2$ functions with three-half derivative in $L^2$, and such result is indeed optimal with respect to the scaling of the equation. Regarding weak solutions, Constantin et al. \cite{CCG13} obtained global existence results for Lipschitz continuous solutions with initial data that satisfy $\|f_0\|_{L^\infty}<\infty$ and $\|\partial_xf_0\|_{L^\infty}<1$. In a more recent work \cite{JKM21}, Jacobs, Kim and M\'{e}sz\'{a}ros introduced a framework to approximate solutions of the Muskat problem with surface tension, and they constructed weak solutions via the convergence of approximate solutions. 

In this current work, our goal is to investigate various properties of interface $f$ to the generalised Muskat problem \eqref{contour eqn} with the presence of parameter $\alpha\in[0,1)$. In view of the system \eqref{general abstract active scalar eqn nondiffusive modified 1}-\eqref{general abstract active scalar eqn nondiffusive modified 3}, the fractional derivative $\Lambda^{\alpha}$ is inserted in the constitutive law \eqref{eqn between velocity and density} that is similar to the approach used in \cite{FGSV12} in two-dimensional case. Our current work is therefore the first one for addressing a generalised Muskat problem in the three-dimensional case. It is also worth pointing out that with the presence of $\Lambda^{\alpha}$, the operator $\rho\mapsto u[\rho]$ as given by \eqref{eqn between velocity and density} becomes a pseudo-differential operator of order $\alpha$ with Fourier multiplier symbol explicitly given by
\begin{align*}
{\hat{u}(k)= \left( k_1k_3|k|^{\alpha-1}\hat{\rho}, k_2k_3|k|^{\alpha-1}\hat{\rho},-(k_1^2+k_2^2)|k|^{\alpha-2}\hat{\rho} \right),}
\end{align*}
where $k=(k_1,k_2,k_3)$ is the frequency variable. We note that $\hat{u}(k)$ is even in $k$, which leads to results that are in sharp contrast to those obtained for the singular surface quasi-geostrophic equations \cite{CCCG12}. The even nature of the symbol relating the active scalar and the drift velocity may give rise to various ill/well-posedness results; see for example \cite{FV11, FS19, FS23, FSW24, Sh11} and the references therein.

The claimed results of this paper will be detailedly discussed in the subsequent sections. In Section~\ref{local-in-time sec}, in the regime $\alpha\in[0,1)$, we obtain local-in-time existence and uniqueness for the interface $f$ in $H^k(\R^2)$ with $k\ge4$ using energy estimates. In Section~\ref{global-in-time existence sec}, we further obtain global-in-time existence of unique $f$ if initially $f_0$ is bounded by $\|f_0\|_{1}<k_0(\alpha)$ with $\alpha\in[0,\frac{1}{2})$ and some constant \footnote{The existence of the constant $k_0(\alpha)$ will be shown in Appendix~\ref{existence of k0 and estimates on principal values sec}.} $k_0(\alpha)>0$, where $\|f_0\|_{1}$ is defined by \eqref{def of fourier norm} via Fourier transform. In Section~\ref{max principle and long time section}, we prove some results regarding the maximum principle for the interface $f$ and further address the long-time behaviour for $f$. Finally in Section~\ref{convergence for alpha section}, we address the convergence of $f$ as $\alpha\to0^{+}$. 


\section{Local-in-time well-posedness}\label{local-in-time sec}

In this section, we prove local-in-time existence and uniqueness for the interface $f$ using energy estimates. For $k\in\N$ and $\delta\in(0,1)$, the norms of $H^k$ and $C^{k,\delta}$ are defined by
\begin{align*}
\|f\|^2_{H^k}&:=\|f\|^2_{L^2}+\|\Lambda^k f\|^2_{L^2},\\
\|f\|_{C^{k,\delta}}&:=\|f\|_{C^k}+\max_{i+j=k}\max_{x\neq y}\frac{|\partial_{x_1}^i\partial_{x_2}^jf(x)-\partial_{x_1}^i\partial_{x_2}^jf(y)|}{|x-y|^\delta}.
\end{align*}

The main theorem in this section is given as follows.

\begin{thm}\label{local existence thm}
Let $f_0\in H^k(\R^2)$ for $k\ge4$. Then for any $\alpha\in[0,1)$, there exists a time $T>0$ such that the contour equation \eqref{contour eqn} possess a unique solution in $C^1([0,T];H^k(\R^2))$ with $f(x,0)=f_0(x)$.
\end{thm}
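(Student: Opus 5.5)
We follow the energy method of C\'ordoba and Gancedo \cite{CG07} for the case $\alpha=0$. Only the kernel changes: the exponent $\frac{3}{2}$ becomes $\frac{3+\alpha}{2}$.

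\textbf{Approximation.} We first regularise \eqref{contour eqn}. Let $J_\epsilon$ be a Friedrichs mollifier and consider
\[
f^\epsilon_t=J_\epsilon\Bigl(\PV\int_{\R^2}\frac{(\nabla J_\epsilon f^\epsilon(x)-\nabla J_\epsilon f^\epsilon(x-y))\cdot y}{[|y|^2+(J_\epsilon f^\epsilon(x)-J_\epsilon f^\epsilon(x-y))^2]^{\frac{3+\alpha}{2}}}\,dy\Bigr).
\]
This is an ODE on $H^k$ with a locally Lipschitz right-hand side. The kernel $|y|^{-2-\alpha}$ is integrable at infinity because $\alpha\ge0$ and the numerator is bounded by $2\|\nabla f\|_{L^\infty}|y|$. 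Near $y=0$ the numerator behaves like $|y|^2$, so the singularity is $|y|^{-\alpha}$ in two dimensions, which is integrable since $\alpha<1$. Picard's theorem then gives solutions $f^\epsilon\in C^1([0,T_\epsilon];H^k)$.

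\textbf{Uniform estimate.} The core of the proof is an estimate of the form
\[
\frac{d}{dt}\|f\|_{H^k}^2\le C\bigl(1+\|f\|_{H^k}\bigr)^{m}
\]
with $C$ and $m$ independent of $\epsilon$. We split the kernel as
\[
\frac{1}{[|y|^2+(\delta_yf)^2]^{\frac{3+\alpha}{2}}}=\frac{1}{|y|^{3+\alpha}}\Bigl(1+\bigl(\tfrac{\delta_y f}{|y|}\bigr)^2\Bigr)^{-\frac{3+\alpha}{2}},
\qquad \delta_yf=f(x)-f(x-y).
\]
Then apply $\partial^k$, meaning any $k$-th order derivative, and pair with $\partial^k f$. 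We sort the terms as follows.
\begin{enumerate}[label=(\roman*)]
\item The most singular term is $\int\partial^kf\,\PV\int \frac{(\nabla\partial^kf(x)-\nabla\partial^kf(x-y))\cdot y}{|y|^{3+\alpha}}A(x,y)\,dy$, with $A=(1+(\delta_yf/|y|)^2)^{-\frac{3+\alpha}{2}}$.
\item The terms in which $\partial^k$ lands on the denominator produce $\partial^k f(x)-\partial^kf(x-y)$ multiplied by $(\nabla f(x)-\nabla f(x-y))\cdot y$.
\item The intermediate terms are bounded by $\|f\|_{C^{2,\delta}}\|f\|_{H^k}^2$. Here we use $k\ge4$ and the Sobolev embedding $H^k\subset C^{2,\delta}$, together with the bound $|\delta_y\nabla f|\le\|f\|_{C^2}|y|$ near the origin and the $L^\infty$ bound for large $|y|$.
\end{enumerate}

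\textbf{Main obstacle.} The crux is term (i), together with the top-order part of (ii). In the stable regime $\rho_2>\rho_1$, term (i) should give a nonpositive, dissipation-like contribution of order $1+\alpha$, morally $-\int A\,|\Lambda^{(1+\alpha)/2}\partial^kf|^2$. Writing $\nabla_y$ in place of the $x$-derivative via $(\nabla g(x)-\nabla g(x-y))\cdot y$, we integrate by parts in $y$ and symmetrise in $x\leftrightarrow x-y$. This yields
\[
-\tfrac12\iint\frac{|\partial^kf(x)-\partial^kf(x-y)|^2}{|y|^{3+\alpha}}A\,dy\,dx\le 0,
\]
plus commutator terms in which derivatives fall on $A$. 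Those commutator terms carry a factor $\|\nabla A\|\lesssim\|f\|_{C^{2,\delta}}$ and lose at most $\alpha<1$ derivatives. Since $A\ge(1+\|\nabla f\|_{L^\infty}^2)^{-\frac{3+\alpha}{2}}>0$, we can absorb them by interpolation into the dissipation; alternatively, we bound them directly using $\alpha<1$, which keeps the kernels $|y|^{-1-\alpha}$ integrable against H\"older increments. The top-order part of (ii) is treated the same way after writing $(\nabla f(x)-\nabla f(x-y))\cdot y=\nabla f(x)\cdot y-(f(x)-f(x-y))+O(|y|^2\|f\|_{C^2})$: it becomes a commutator of order $\alpha$ applied to $\partial^kf$, controlled by $\|f\|_{C^{2,\delta}}\|f\|_{H^k}^2$. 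The $L^2$ part of the norm is handled in the same way, without the top-order difficulty.

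\textbf{Passage to the limit and uniqueness.} With the uniform bound we get a common existence time $T$. By Aubin--Lions compactness and a Cauchy estimate in $L^2$ for $f^\epsilon-f^{\epsilon'}$, the approximations converge to a solution in $L^\infty(0,T;H^k)$. Continuity in time, and hence $C^1([0,T];H^k)$, follows by the standard weak continuity argument and the Bona--Smith argument. For uniqueness, we take two solutions, estimate $\frac{d}{dt}\|f-g\|_{L^2}^2\le C(\|f\|_{H^k},\|g\|_{H^k})\|f-g\|_{L^2}^2$ using the same symmetrisation, and conclude by Gronwall's inequality.
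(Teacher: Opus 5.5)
Your overall strategy is the paper's. It is the C\'ordoba--Gancedo energy method: an $H^k$ a priori bound whose top-order term, after integrating by parts and symmetrising, produces the nonpositive quantity $-\iint(\partial^kf(x)-\partial^kf(y))^2[|x-y|^2+(f(x)-f(y))^2]^{-\frac{3+\alpha}{2}}\,dy\,dx$; Gr\"onwall for a uniform time; and uniqueness by an $L^2$ Gr\"onwall estimate on the difference. You also spell out the mollification and compactness step that the paper calls a ``standard energy argument''.

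The genuine difference is the leftover top-order commutator, the paper's $\mathbf{J}_3$, whose kernel $K_\alpha\sim|x-y|^{-2-\alpha}$ acts on increments of $\partial^kf$. The paper discards the dissipative term and removes this commutator algebraically. Since $K_\alpha-D_\alpha$ is antisymmetric, $\mathbf{L}_2=0$. $D_\alpha$ is integrable once $f\in C^{2,\delta}$ with $\delta>\alpha$. $\PV\int K_\alpha$ is bounded by odd symmetry.

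You instead keep the dissipation, which is at least $c(1+\|\nabla f\|_{L^\infty}^2)^{-\frac{3+\alpha}{2}}\|\partial^kf\|^2_{\dot H^{(1+\alpha)/2}}$, and absorb the commutator into it. Made precise with $g=\partial^kf$, this reads
\[
\iint_{|x-y|<1}\frac{|g(x)|\,|g(x)-g(y)|}{|x-y|^{2+\alpha}}\,dy\,dx\le\Bigl(\iint\frac{|g(x)-g(y)|^2}{|x-y|^{3+\alpha}}\,dy\,dx\Bigr)^{1/2}\Bigl(\frac{2\pi}{1-\alpha}\Bigr)^{1/2}\|g\|_{L^2},
\]
so $\alpha<1$ enters exactly through $\int_{|y|<1}|y|^{-1-\alpha}\,dy<\infty$. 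This works, and it also handles the top-order part of your (ii). It needs only $C^2$ control of $f$ for these terms and retains a time-integrated gain of $\frac{1+\alpha}{2}$ derivatives, whereas the paper uses only the sign of the dissipation. Your ``bound them directly'' alternative does not work as stated. An order-$\alpha$ kernel acting on increments of an $L^2$ function is not bounded without either the dissipation or the paper's antisymmetric splitting.

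Three steps need repair.

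\textbf{First,} the part of (i) carrying $\nabla\partial^kf(x)\cdot y$ (the paper's $\mathbf{J}_1$) is not of increment form, so absorption cannot reach it. Integrating by parts in $x$ turns it into
\[
\frac{3+\alpha}{2}\int_{\R^2}|\partial^kf(x)|^2\,\PV\int_{\R^2}\frac{(f(x)-f(x-y))\,(\nabla f(x)-\nabla f(x-y))\cdot y}{[|y|^2+(f(x)-f(x-y))^2]^{\frac{5+\alpha}{2}}}\,dy\,dx,
\]
whose inner kernel has size $|y|^{-2-\alpha}$ and is not integrable in $\R^2$. You need the cancellation behind the paper's bound on $M_\alpha(f)$. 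The leading part is odd in $y$, and the remainder is $O(|y|^{-2-\alpha+\delta})$, which needs $f\in C^{2,\delta}$ with $\delta>\alpha$ (available from $H^k\subset C^{2,\delta}$). The Leibniz term with $k-1$ derivatives on the numerator has a related problem. Its increment is that of a $k$-th derivative, so the $C^2$ bound you quote in (iii) does not apply; treat it like the commutator.

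\textbf{Second,} at $\alpha=0$ the kernel $|y|^{-2}$ is not integrable at infinity in $\R^2$, so your justification of the mollified vector field fails there. The principal value is genuinely needed: $y|y|^{-3}$ is odd, and $[|y|^2+(f(x)-f(x-y))^2]^{-3/2}-|y|^{-3}=O(\|f\|_{L^\infty}^2|y|^{-5})$. The $\nabla f(x-y)$ part must be controlled through $\nabla f\in L^2$ rather than $L^\infty$.

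\textbf{Third,} continuity in $H^k$ does not give $C^1([0,T];H^k)$. $\partial_tf(0)$ is an operator of order $1+\alpha$ applied to $f_0$, with linear part $-c\Lambda^{1+\alpha}f_0$, so for general $f_0\in H^k$ it lies only in $H^{k-1-\alpha}$. Your argument, like the paper's, actually yields $f\in C([0,T];H^k)\cap C^1([0,T];H^{k-2})$. The paper does not justify the $C^1$-in-$H^k$ claim either.
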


\begin{proof}
We apply energy method for proving Theorem~\ref{local existence thm}. Throughout this proof, $C$ shall denote a positive and sufficiently large constant whose value may change from line to line.

We multiply the contour equation \eqref{contour eqn} by $f$ and integrate to obtain
\begin{equation*}
\begin{split}
\frac{1}{2} \frac{d}{dt} \left\|f\right\|_{L^2}^2(t)&=\int_{ \mathbb{R}^2 } f(x) \PV \int_{\mathbb{R}^2 } \frac{\left( \nabla f(x) - \nabla f(x - y)\right)\cdot y }{ \left[ |y|^2+(f(x)-f(x-y) )^2 \right]^{\frac{3 + \alpha}{2}} } dxdy\, \quad \quad \quad  \\
&= \int_{ \mathbb{R}^2 } f(x) \int_{ |y| < 1 } \frac{\left( \nabla f(x) - \nabla f(x - y)\right)\cdot y }{ \left[ |y|^2+(f(x)-f(x-y) )^2 \right]^{\frac{3 + \alpha}{2}} } dxdy\,   \\
&\qquad+ \PV \int_{ \mathbb{R}^2 } f(x) \int_{ |y| > 1 } \frac{ \nabla f(x)\cdot y }{ \left[ |y|^2+(f(x)-f(x-y) )^2 \right]^{\frac{3 + \alpha}{2}} } dxdy\,    \\
&\qquad- \PV \int_{ \mathbb{R}^2 } f(x) \int_{ |y| > 1 } \frac{ \nabla f(x-y)\cdot y }{ \left[ |y|^2+(f(x)-f(x-y) )^2 \right]^{\frac{3 + \alpha}{2}} } dxdy\, \\
&\triangleq \mathbf{I}_1 + \mathbf{I}_2 + \mathbf{I}_3.
\end{split}
\end{equation*}
We estimate the terms $\mathbf{I}_1$, $\mathbf{I}_2$ and $\mathbf{I}_3$ as follows. By the fundamental theorem of calculus, we have
\begin{equation*}
\begin{split}
\mathbf{I}_1&\leq C \int_{ 0 }^{1} ds  \int_{ |y| < 1 }  {|y|^{2 -(3 +\alpha)} }  \ \int_{ \mathbb{R}^2 } \frac{|f(x)| \, |\nabla^2 f(x+(s-1)y)|} {\left[ 1+\left(f(x)-f(x-y)\right)^2|y|^{-2}\right]^{\frac{3+\alpha}{2}}} dxdy\\
& \leq C \ \left\|f\right\|_{L^2} \ \sum_{i+j=2} \left\|\partial_{x_1}^i\partial_{x_2}^j f\right\|_{L^2}\int_{ 0 }^{1} ds  \int_{ |y| < 1 }  {|y|^{-1-\alpha} }dy   \\
&\leq \frac{ C\left\|f\right\|_{H^2}^2 }{ 1 - \alpha }. 
\end{split}
\end{equation*}
Integrating by parts, the term $\mathbf{I}_2$ can be bounded by
\begin{equation*}
\begin{split}
\mathbf{I}_2&\leq \frac{3+\alpha}{2}  \int_{ |y| > 1 } \int_{ \mathbb{R}^2 }  |f(x)|^2 \cdot \frac{\left(f(x)-f(x-y)\right)\cdot \left(\nabla f(x)-\nabla f(x-y)\right)\cdot y} {\left[ |y|^2+(f(x)-f(x-y) )^2 \right]^{\frac{5+\alpha}{2}}} dxdy\\
& \leq C  \int_{ |y| > 1 } |y|^{-(5+\alpha)+1} \int_{ \mathbb{R}^2 } |f(x)|^2  \cdot \frac{|f(x)-f(x-y)|\cdot |\nabla f(x)-\nabla f(x-y)|} {\left[ 1+\left(f(x)-f(x-y) \right)^2 |y|^{-2}\right]^{\frac{5+\alpha}{2}}} dxdy\\
 &\leq C \left\|f\right\|_{L^\infty}\left\|f\right\|_{H^2}^2    \ \int_{ |y| < 1 } |y|^{-(4+\alpha)} dy\\
 &\leq \frac{C\left\|f\right\|_{L^\infty}\left\|f\right\|_{H^2}^2}{2+\alpha}.
\end{split}
\end{equation*}
Integrating by parts on $\mathbf{I}_3$, we also have
\begin{equation*}
\begin{split}
\mathbf{I}_3&=  \int_{ |y| > 1 } \int_{ \mathbb{R}^2 } f(x)f(x-y) \cdot \frac{|y|^2 -2\left(f(x)-f(x-y)\right)^2} {\left[ 1+\left(f(x)-f(x-y)\right)^2|y|^{-2}\right]^{\frac{5+\alpha}{2}}} dxdy\\
& \qquad+(3+\alpha) \ \int_{ |y| > 1 } \int_{ \mathbb{R}^2 } f(x)f(x-y) \cdot \frac{(f(x)-f(x-y))\cdot (\nabla f(x-y))\cdot y} {\left[ 1+\left(f(x)-f(x-y)\right)^2|y|^{-2}\right]^{\frac{5+\alpha}{2}}} dxdy\\
& \qquad-\int_{ |y| = 1 } \int_{ \mathbb{R}^2 } f(x)f(x-y) \cdot \frac{|y|^2 } {\left[ 1+\left(f(x)-f(x-y)\right)^2|y|^{-2}\right]^{\frac{3+\alpha}{2}}}dxdS \\
 &\leq C \left(\left\|f\right\|_{L^\infty} +1\right) \     \left\|f\right\|_{H^1}^2.
\end{split}
\end{equation*}
Combining the estimates on $\mathbf{I}_1$, $\mathbf{I}_2$ and $\mathbf{I}_3$ and applying Sobolev inequalities, we conclude that
\begin{align}\label{L2 estimate}
\frac{d}{dt}  \left\|f\right\|_{L^2}^2(t) \leq C (\left\|f\right\|_{H^2}^3(t)+1).
\end{align}

Next, in order to obtain higher order estimates, for $1\le k\le4$, we apply $\partial_{x_1}^k$ on \eqref{contour eqn}, multiply it by $\partial_{x_1}^k f$ and integrate to get
\begin{equation*}
\frac{1}{2} \frac{d}{dt} \left\|\partial_{x_1}^k f\right\|_{L^2}^2(t) = \mathbf{I}_{4} + \mathbf{I}_{5},
\end{equation*}
where
\begin{align*}
\mathbf{I}_{4}&=\int_{\R^2}\partial_{x_1}^kf(x)\PV\int_{\R^2}(\nabla\partial_{x_1}^kf(x)-\nabla\partial_{x_1}^kf(x-y))\cdot y A_\alpha(x,y)dydx,\\
\mathbf{I}_{5}&=\sum_{i=1}^k{k\choose i}\int_{\R^2}\partial_{x_1}^k f(x)\int_{\R^2}(\nabla\partial_{x_1}^if(x)-\nabla\partial_{x_1}^if(x-y))\cdot y \partial_{x_1}^{k-i}A_\alpha(x,y)dydx
\end{align*}
and
\begin{align*}
A_\alpha(x,y)=\left[ |y|^2 + (f(x) - f(x - y))^2 \right]^{-\frac{3 +\alpha}{2}}.
\end{align*}
To estimate $\mathbf{I}_{4}$, we first rewrite it as follows:
\begin{equation*}
\begin{split}
\mathbf{I}_4 &= \int_{\mathbb{R}^2}\partial_{x_1}^k f(x) \PV  \int_{\mathbb{R}^2} \frac{\nabla \partial_{x_1}^k \ f(x) \cdot y}{\left[ |y|^2 + (f(x) - f(x - y))^2 \right]^{\frac{3 +\alpha}{2}}} \, dydx\\
&\qquad- \int_{\mathbb{R}^2} \partial_{x_1}^k f(x) \PV \ \int_{\mathbb{R}^2} \frac{\nabla \partial_{x_1}^k f(x) \cdot (x - y)}{\left[ |y|^2 + \left(f(x) - f(x - y)\right)^2 \right]^{\frac{3 + \alpha}{2}}} \, dydx\\
&\triangleq \mathbf{J}_1+\mathbf{J}_2.
\end{split}
\end{equation*}
To bound $\mathbf{J}_1$, 
\begin{equation*}
\begin{split}
\mathbf{J}_1 &= \frac{3 + \alpha}{2} \int_{\mathbb{R}^2} | \partial_{x_1}^k f(x) |^2 \PV \int_{\mathbb{R}^2}  \frac{(f(x)-f(x-y))(\nabla f(x)-\nabla f(x-y))\cdot y }{\left[ 1+\left(f(x)-f(x-y)\right)^2|y|^{-2}\right]^{\frac{5 +\alpha}{2}}} dydx \\
&\le \frac{3+\alpha}{2}\|f\|_{C^1}\|\partial_{x_1}^k f\|^2_{L^2}+\frac{3+\alpha}{2}M_\alpha(f)\|\partial_{x_1}^k f\|^2_{L^2},
\end{split}
\end{equation*}
where \( M_\alpha(f) \) is given by
\begin{equation}\label{def of M alpha (f)}
M_\alpha(f) = \max_x\Big| \PV \int_{|y|<1}  \frac{\left(f(x)-f(x-y)\right)\cdot \left(\nabla f(x)-\nabla f(x-y)\right)\cdot y}{\left[ |y|^2 + \left(f(x) - f(x - y)\right)^2 \right]^{\frac{5 + \alpha}{2}}}dy\Big|.
\end{equation}
Following the similar method in \cite{CG07}, there exists $\delta>\max\{0,\alpha\}$ such that
\begin{align*}
M_\alpha(f)&\le \|f\|^2_{C^2}\max_{x}\Big|\int_{|y|<1}\frac{|y|^{-1}}{\left[ 1 + ((f(x) - f(x - y))|y|^{-1})^2 \right]^{\frac{5 + \alpha}{2}}}dy\Big|\\
&\qquad+\|f\|_{C^1}\|f\|_{C^{2,\delta}}\Big|\int_{|y|<1}|y|^{-2-\alpha+\delta}dy\Big|+C\|f\|^2_{C^1}\|f\|^2_{C^2}\\
&\le C( \left\|f\right\|_{C^2}^2 + \left\|f\right\|_{C^1}\left\|f\right\|_{C^{2,\delta}}   + \left\|f\right\|_{C^{1}}^2\left\|f\right\|_{C^2}^2),
\end{align*}
and hence
\begin{align*}
\mathbf{J}_1\le C(\|f\|^4_{C^{2,\delta}}+1)\|\partial^k_{x_1}f\|^2_{L^2}.
\end{align*}
To bound $\mathbf{J}_2$, we integrate by parts to rewrite $\mathbf{J}_2$ as follows:
\begin{align*}
\mathbf{J}_2&=\int_{\R^2}\partial_{x_1}^kf(x)\PV\int_{\R^2}\frac{\nabla_y(\partial_{x_1}^kf(x)-\partial_{x_1}^kf(y))\cdot(x-y)}{\left[ |x-y|^2 + \left(f(x) - f(y)\right)^2 \right]^{\frac{3 + \alpha}{2}}}dydx\\
&=-\frac{1}{2}\int_{\R^2}\int_{\R^2}\frac{(\partial_{x_1}^k(f(x)-\partial_{x_1}^k(f(y))^2}{\left[ |x-y|^2 + \left(f(x) - f(y)\right)^2 \right]^{\frac{3 + \alpha}{2}}}dydx+\mathbf{J}_3,
\end{align*}
where 
\begin{align*}
\mathbf{J}_3=(3+\alpha)\int_{\R^2}\partial_{x_1}^kf(x)\int_{\R^2}(\partial_{x_1}^kf(x)-\partial_{x_1}^kf(y))K_{\alpha}(x,y)dydx
\end{align*}
and
\begin{align*}
K_{\alpha}(x,y)=\frac{(f(x)-f(y))(f(x)-f(y)-\nabla f(y)\cdot(x-y))}{\left[ |x-y|^2 + \left(f(x) - f(y)\right)^2 \right]^{\frac{5 + \alpha}{2}}}.
\end{align*}
We further split $\mathbf{J}_3$ as 
\begin{align*}
\mathbf{J}_3=\mathbf{L}_1+\mathbf{L}_2+\mathbf{L}_3,
\end{align*}
with $\mathbf{L}_1$, $\mathbf{L}_2$, $\mathbf{L}_3$ being defined by
\begin{align*}
\mathbf{L}_1&=(3+\alpha)\int_{\R^2}|\partial_{x_1}^kf(x)|^2\PV\int_{\R^2}K_{\alpha}(x,y)dydx,\\
\mathbf{L}_2&=-(3+\alpha)\PV\int_{\R^2}\int_{\R^2}\partial_{x_1}^kf(x)\partial_{x_1}^kf(y)(K_{\alpha}(x,y)-D_{\alpha}(x,y))dydx,\\
\mathbf{L}_3&=-(3+\alpha)\int_{\R^2}\int_{\R^2}\partial_{x_1}^kf(x)\partial_{x_1}^kf(y)D_{\alpha}(x,y)dydx
\end{align*}
and
\begin{align*}
D_{\alpha}(x,y)=\frac{(f(x)-f(y)-\nabla f(y)\cdot(x-y)-\frac{1}{4}(x-y)\cdot(\nabla^2f(x)+\nabla^2f(y))\cdot(x-y))}{\left[ |x-y|^2 + \left(f(x) - f(y)\right)^2 \right]^{\frac{5 + \alpha}{2}}}.
\end{align*}
Notice that $K_{\alpha}(y,x)-D_{\alpha}(y,x)=-(K_{\alpha}(x,y)-D_{\alpha}(x,y))$, hence by interchanging $x$ and $y$, we have $\mathbf{L}_2=0$. And for $\mathbf{L}_1$, it can be bounded in the same way as for $\mathbf{J}_1$ to give
\begin{align*}
\mathbf{L}_1\le C(\|f\|^4_{C^{2,\delta}}+1)\|\partial^k_{x_1}f\|^2_{L^2}.
\end{align*}
To bound the term $\mathbf{L}_3$, since for $\delta>\max\{0,\alpha\}$ and $\alpha\in[0,1)$, we have
\begin{align*}
&\int_{\R^2}|\partial_{x_1}^kf(x)|^2\int_{\R^2}|f(x)-f(y)||D_{\alpha}(x,y)|dydx\\
&=\int_{\R^2}|\partial_{x_1}^kf(x)|^2\int_{\R^2}|f(x)-f(x-y)||D_{\alpha}(x,x-y)|dydx\\
&\le C\|f\|_{C^1}\|f\|_{C^{2,\delta}}\|\partial_{x_1}^kf\|^2_{L^2}\left(\int_{|y|<1}|y|^{-2-\alpha+\delta}dy+\int_{|y|>1}|y|^{-3-\alpha}dy\right)\\
&\le C\|f\|_{C^1}\|f\|_{C^{2,\delta}}\|\partial_{x_1}^kf\|^2_{L^2},
\end{align*}
hence by symmetry,
\begin{align*}
\mathbf{L}_3\le C\|f\|_{C^1}\|f\|_{C^{2,\delta}}\|\partial_{x_1}^kf\|^2_{L^2}.
\end{align*}
We conclude that
\begin{equation}\label{bound on I4}
\mathbf{I}_4 \leq C ( \left\|f\right\|_{C^{2+ \delta}}^4  + 1 ) \left\|\partial_{x_1}^4 f \right\|_{L_2}^2.
\end{equation}
The term $\mathbf{I}_5$ can be bounded in the same way as given in \cite{CG07}, which also gives
\begin{equation}\label{bound on I5}
\mathbf{I}_5\le C (\|f\|_{C^{2+ \delta}}^4+1)\left\|\partial_{x_1}^4 f \right\|_{L_2}^2.
\end{equation}
Combining \eqref{bound on I4} and \eqref{bound on I5}, for all $1\le k\le 4$,
\begin{align}\label{Hk estimate 1}
\frac{d}{dt}  \left\|\partial_{x_1}^4 f\right\|_{L^2}^2(t) \leq C (\|f\|_{C^{2+ \delta}}^4+1)\left\|\partial_{x_1}^4 f \right\|_{L_2}^2,
\end{align}
and in a similar way, we also have
\begin{align}\label{Hk estimate 2}
\frac{d}{dt}  \left\|\partial_{x_2}^4 f\right\|_{L^2}^2(t) \leq C (\|f\|_{C^{2+ \delta}}^4+1)\left\|\partial_{x_2}^4 f \right\|_{L_2}^2,
\end{align}

Since $\|f\|^2_{H^4}=\|f\|^2_{L^2}+\|\partial_{x_1}^4 f\|_{L^2}^2+\|\partial_{x_2}^4 f\|_{L^2}^2$, by the estimates \eqref{L2 estimate}, \eqref{Hk estimate 1} and \eqref{Hk estimate 2},
\begin{align}\label{H4 estimate}
\frac{d}{dt}\|f\|_{H^4}(t)\le C(\|f\|_{H^4}^5+1).
\end{align}
Applying Gr\"{o}nwall's inequality on \eqref{H4 estimate}, for each $\alpha\in[0,1)$, there exists a positive time $T=T(\|f_0\|_{H^4},\alpha)$ and a positive constant $M=M(T,\alpha,\|f_0\|_{H^4})$ such that
\begin{align}\label{bound on H4 norm}
\sup_{0\le t\le T}\|f\|_{H^4}(t)\le M,
\end{align}
hence the local existence result for \eqref{contour eqn} follows by standard energy argument.

To prove the uniqueness of solution, let $f_1(x_1,x_2,t)$ and $f_2(x_1,x_2,t)$ be two solutions of \eqref{contour eqn} with $f_1(x_1,x_2,0)=f_2(x_1,x_2,0)=f_0((x_1,x_2)$. Using the bound \eqref{bound on H4 norm}, for $i\in\{1,2\}$,
\begin{align}\label{bound on f1 f2}
\sup_{0\le t\le T}\|f_i\|_{H^4}(t)\le M.
\end{align}

Define the difference by $\tilde f=f_1-f_2$. Then $\tilde f$ satisfies
\begin{align*}
\frac{d}{dt}\|\tilde f\|^2_{L^2}(t)=\mathbf{I}_{6}+\mathbf{I}_{7}+\mathbf{I}_{8},
\end{align*}
where
\begin{align*}
\mathbf{I}_{6}&=\int_{\R^2}\tilde f(x)\nabla \tilde f(x)\cdot \PV\int_{\R^2}y\left[ |y|^2 + (f_1(x) - f_1(x - y))^2 \right]^{-\frac{3 +\alpha}{2}}dydx,\\
\mathbf{I}_{7}&=-\int_{\R^2}\tilde f(x)\PV\int_{\R^2}\nabla \tilde f(y)\cdot(x-y)\left[ |x-y|^2 + (f_1(x) - f_1(y))^2 \right]^{-\frac{3 +\alpha}{2}},\\
\mathbf{I}_{8}&=\int_{\R^2}\tilde f(x)\PV\int_{\R^2}(\nabla f_2(x) - \nabla f_2(x-y))\cdot y N_{\alpha}(x,y)dydx,
\end{align*}
with
\begin{align*}
N_{\alpha}(x,y) = \left[ |y|^2 + (f_1(x) - f_1(x - y))^2 \right]^{-\frac{3 +\alpha}{2}} - \left[ |y|^2 + (f_2(x) - f_2(x - y))^2 \right]^{-\frac{3 +\alpha}{2}}.
\end{align*}
Integrating by parts and using the bound \eqref{bound on f1 f2}, we readily have
\begin{align*}
\mathbf{I}_{6} + \mathbf{I}_{8} \le C(M)\|\tilde f\|^2_{L^2},
\end{align*}
where $C(M)$ is a positive constant which depends on $M$. For the term $\mathbf{I}_{7}$, we estimate it as follows.
\begin{align*}
\mathbf{I}_{7}&=-\PV\int_{\R^2}\int_{\R^2}\tilde f(x)(\tilde f(x) - \tilde f(y))\left[ |x-y|^2 + (f_1(x) - f_1(y))^2 \right]^{-\frac{3 +\alpha}{2}}dydx\\
&\qquad+\PV\int_{\R^2}\int_{\R^2}\tilde f(x)(\tilde f(x) - \tilde f(y))\\
&\qquad\qquad\times\frac{(3+\alpha)(f_1(x) - f_2(y))(f_1(x) - f_1(y) - \nabla f_1(x)(x-y))}{\left[ |x-y|^2 + (f_1(x) - f_1(y))^2 \right]^{-\frac{5 +\alpha}{2}}}dydx\\
&\le C(M)\|\tilde f\|^2_{L^2}.
\end{align*}
Hence we obtain
\begin{align*}
\frac{d}{dt}\|\tilde f\|^2_{L^2}(t)\le C(M)\|\tilde f\|^2_{L^2}(t),
\end{align*}
and the uniqueness follows by applying Gr\"{o}wall's inequality.
\end{proof}
\begin{rem}\label{uniform bound remark}
In view of the proof of Theorem~\ref{local existence thm}, for each $\bar{\alpha}\in[0,1)$, there exists a positive constant $C(\bar{\alpha})$ such that for all $\alpha\in[0,\bar{\alpha})$, if $f_\alpha$ is a local-in-time solution to \eqref{contour eqn} with the same initial data $f_0$, then we have
\begin{align*}
\frac{d}{dt}\|f_\alpha\|_{H^4}(t)\le C(\bar{\alpha})(\|f_\alpha\|_{H^4}^5+1).
\end{align*}
Hence there exists a positive time $\bar{T}=\bar{T}(\|f_0\|_{H^4},\bar\alpha)$ and a positive constant $\bar{M}=\bar{M}(\bar{T},\bar{\alpha},\|f_0\|_{H^4})$ such that $f_\alpha$ are solutions to \eqref{contour eqn} for all $\alpha\in[0,\bar{\alpha})$ on $[0,T]$ satisfying
\begin{align}\label{uniform bound on H4 norm}
\sup_{0\le \alpha<\bar{\alpha}}\sup_{0\le t\le \bar{T}}\|f_\alpha\|_{H^4}(t)\le \bar{M}.
\end{align}
The uniform bound \eqref{uniform bound on H4 norm} will be useful for proving convergence of solutions as $\alpha\to0^+$ later in Section~\ref{convergence for alpha section}.
\end{rem}


\section{Global-in-time existence with bounded data}\label{global-in-time existence sec}

In this section, we obtain global-in-time existence for classical solutions of the contour equation \eqref{contour eqn}. We make use of the norm given by
\begin{equation}\label{def of fourier norm}
\left\|f\right\|_{s} \triangleq \int_{\mathbb{R}^2} d\xi |\xi|^{s} |\hat{f}(\xi)| \,  , \quad s \geq 1.
\end{equation}
The results are summarised in the following theorem:

\begin{thm}\label{global existence thm}
Let $\alpha\in[0,\frac{1}{2})$ be given. Assume that $f_0\in H^k(\R^2)$ for some $k\ge4$ and $\|f_0\|_{1}<k_0(\alpha)$, where $k_0(\alpha)$ is a constant that satisfies
\begin{align}\label{condition on k0}
2 C(\alpha) \sum_{n=1}^{\infty}   \left( 2n+1 \right)^{2}\frac{\Gamma\left(\frac{3 + \alpha}{2} + n\right)}{\Gamma\left(\frac{3 + \alpha}{2}\right)} \cdot \frac{k_0(\alpha)^{2n}}{n!}<1,
\end{align}
with $C(\alpha)$ being some positive constant. Then there exists a unique solution $f$ of \eqref{contour eqn} with initial data $f_0$ such that $f\in C([0,T]; H^k(\R^2))$ for any $T>0$.
\end{thm}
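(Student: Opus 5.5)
We follow the strategy of Constantin et al.\ \cite{constantin2016muskat}: establish an a priori bound showing that $\|f\|_1(t)$ is nonincreasing, and then use it to continue the local solution of Theorem~\ref{local existence thm}.

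\textbf{Step 1: reduce to the symbol $|\xi|^{1+\alpha}$.} Write $\Delta_y f(x)=(f(x)-f(x-y))/|y|$. Expand the kernel as the binomial series
\begin{align*}
\left[1+(\Delta_y f)^2\right]^{-\frac{3+\alpha}{2}}=\sum_{n\ge0}(-1)^n\frac{\Gamma(\frac{3+\alpha}{2}+n)}{\Gamma(\frac{3+\alpha}{2})\,n!}(\Delta_y f)^{2n}.
\end{align*}
This converges because $\|\Delta_y f\|_{L^\infty}\le\|\nabla f\|_{L^\infty}\le\|f\|_1<k_0(\alpha)<1$. The condition \eqref{condition on k0} forces $k_0(\alpha)<1$ when $C(\alpha)\ge1$; otherwise we use the Fourier-side argument below.

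The $n=0$ term is
\begin{align*}
\PV\int(\nabla f(x)-\nabla f(x-y))\cdot y\,|y|^{-3-\alpha}dy=-c_\alpha\Lambda^{1+\alpha}f,
\end{align*}
with $c_\alpha>0$ for $\alpha<1$. This is the dissipative linear part, and it has the right sign precisely because of \eqref{stable case assumption}. The equation therefore takes the form $f_t=-c_\alpha\Lambda^{1+\alpha}f+\sum_{n\ge1}N_n(f)$.

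\textbf{Step 2: Fourier representation of $N_n$.} Take the Fourier transform of $N_n$. Writing $\Delta_y f$ in Fourier variables as $\int\hat f(\xi)e^{ix\cdot\xi}(1-e^{-iy\cdot\xi})/|y|\,d\xi$, we get
\begin{align*}
\widehat{N_n}(\xi)=\int\!\cdots\!\int\hat f(\xi-\xi_1)\,\hat f(\xi_1-\xi_2)\cdots\hat f(\xi_{2n})\,m_n(\xi,\xi_1,\dots,\xi_{2n})\,d\xi_1\cdots d\xi_{2n}.
\end{align*}
Here $m_n$ is an integral over $y$ of products of $(1-e^{-iy\cdot\eta_j})$ against $|y|^{-3-\alpha-2n}$, multiplied by the derivative factor. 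Scaling in $y$ and using $|1-e^{-iy\cdot\eta}|\le\min(2,|y||\eta|)$ gives
\begin{align*}
|m_n|\le C(\alpha)(2n+1)^2\sum|\text{frequency}|^{1+\alpha}.
\end{align*}
This estimate is what the appendix on $k_0(\alpha)$ and principal values provides. Distributing $|\xi|^{1+\alpha}\le(2n+1)^{\alpha}\sum_j|\xi_j-\xi_{j+1}|^{1+\alpha}$, which is where the factor $(2n+1)^2$ comes from, we obtain
\begin{align*}
\frac{d}{dt}\|f\|_1\le-c_\alpha\|f\|_{2+\alpha}+2C(\alpha)\sum_{n\ge1}(2n+1)^2\frac{\Gamma(\frac{3+\alpha}{2}+n)}{\Gamma(\frac{3+\alpha}{2})n!}\|f\|_1^{2n}\,\|f\|_{2+\alpha}.
\end{align*}
To pass from $\|f\|_1^{2n}\|f\|_{1+\alpha}$-type factors to this form, we interpolate $\|\cdot\|_{1+\alpha}$, $\|\cdot\|_1$ and $\|\cdot\|_{2+\alpha}$, normalising $C(\alpha)$ so that $c_\alpha$ is absorbed.

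\textbf{Step 3: monotonicity and continuation.} By \eqref{condition on k0}, if $\|f\|_1(t)<k_0(\alpha)$ then $\frac{d}{dt}\|f\|_1\le-(1-\theta)\|f\|_{2+\alpha}\le0$ for some $\theta<1$. A continuity argument then shows $\|f\|_1(t)\le\|f_0\|_1<k_0$ for all $t$, and $\int_0^T\|f\|_{2+\alpha}\,dt\le C\|f_0\|_1$.

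Next, revisit the $H^k$ energy estimate from the proof of Theorem~\ref{local existence thm}. Instead of the crude $\|f\|_{H^4}^5$ growth, we use the bound $\|f\|_{C^{2,\delta}}\lesssim\|f\|_{2+\delta}$. Choosing $\delta\in(\alpha,1)$ such that $\|f\|_{2+\delta}$ is controlled (interpolating $\|\cdot\|_{2+\alpha}$ with the dissipation, or repeating Step 2 for $\|f\|_{s}$ with $s>2$), the estimate \eqref{Hk estimate 1} becomes linear in $\|f\|_{H^k}^2$ with a time-integrable coefficient. Gr\"onwall's inequality then gives a bound on $\|f\|_{H^k}(t)$ for every finite $T$. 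Together with the local existence and uniqueness of Theorem~\ref{local existence thm}, this yields the global solution.

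\textbf{Main obstacle.} The hard step is the multiplier estimate in Step 2 for $\alpha>0$. Two issues must be handled. First, the $y$-integrals converge only for $\alpha<1$. Second, we need the polynomial growth $(2n+1)^2$ in $n$ uniformly in $n$, and for $\alpha\ge\frac12$ the interpolation used to absorb the nonlinearity with the available dissipation fails. We would first try to bound $m_n$ by reducing it, via $\int_0^1$ mean-value representations, to the $n=0$ symbol evaluated at shifted frequencies.
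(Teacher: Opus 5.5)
Your outline has the same architecture as the paper: expand the kernel, show $\|f\|_1$ is nonincreasing under \eqref{condition on k0} as in Lemmas~\ref{estimate on non-linear N lem}--\ref{f1 estimate lemma}, propagate higher Fourier norms, close an $H^k$ Gr\"onwall bound, and continue the local solution. However, two steps do not work as written.

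\textbf{Step~2.} The stated bound on $m_n$ has the wrong homogeneity, since the symbol of $N_n$ is homogeneous of degree $2n+1+\alpha$. The proposed interpolation is also impossible. By scaling, $\int|\xi|\,|\widehat{N_n}|\,d\xi$ and $\|f\|_1^{2n}\|f\|_{2+\alpha}$ have the same degree, whereas $\|f\|_1^{2n}\|f\|_{1+\alpha}$ has one derivative fewer, so no interpolation among $\|\cdot\|_1,\|\cdot\|_{1+\alpha},\|\cdot\|_{2+\alpha}$ converts one into the other. The right bound comes out directly. Write $\eta_1,\dots,\eta_{2n+1}$ for the frequencies of the factors, with $\sum_j\eta_j=\xi$ and $\eta_1$ the differentiated one. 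The paper bounds the $n$-th symbol by $2C(\alpha)a_n|\eta_1|\prod_{j\ge2}|\eta_j|\,(\sum_j|\eta_j|)^{\alpha}$ via Lemma~\ref{estimates on PV lem}. Then $|\xi|(\sum_j|\eta_j|)^\alpha\le(2n+1)^\alpha\sum_k|\eta_k|^{1+\alpha}$ gives $\|f\|_1^{2n}\|f\|_{2+\alpha}$ with weight $(2n+1)^{1+\alpha}\le(2n+1)^2$. Your absolute-value idea is in fact a viable alternative for $\alpha\in(0,1)$, provided you keep $\min(2|y|^{-1},|\eta_k|)$ only for the largest frequency, since $\int_{\R^2}|y|^{-1-\alpha}\min(2|y|^{-1},|\eta_k|)\,dy=\frac{2^{2-\alpha}\pi}{\alpha(1-\alpha)}|\eta_k|^{\alpha}$. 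This gives weight exactly $(2n+1)^2$ without principal values. But the constant blows up as $\alpha\to0$ and the integral diverges logarithmically at $\alpha=0$, so that case has to be taken from \cite{constantin2016muskat}, as the paper does.

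\textbf{Step~3 (the more serious gap).} The inequality \eqref{Hk estimate 1} has coefficient $C(\|f\|_{C^{2,\delta}}^4+1)$ with $\delta>\alpha$, which is needed for $\int_{|y|<1}|y|^{-2-\alpha+\delta}dy<\infty$. So you need $\|f\|_{2+\delta}$ bounded, or at least in $L^4_t$, on $[0,T]$. Neither of your two routes provides this.
\begin{enumerate}
\item The dissipation only ever gives $L^1_t$ information. Your Step~3 yields $\int_0^T\|f\|_{2+\alpha}dt$, which sits exactly at the excluded endpoint $\delta=\alpha$. Even the level-$(1+\delta')$ estimate gives only $\int_0^T\|f\|_{2+\delta'+\alpha}dt$. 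Interpolation with $\|f\|_1$ can neither push the index above the dissipated one nor improve time integrability.
\item Re-running Step~2 at level $s=2+\delta$ does not close under \eqref{condition on k0}. Distributing $|\xi|^{s}(\sum_j|\eta_j|)^\alpha$ over $2n+1$ factors produces the weight $(2n+1)^{s+\alpha}=(2n+1)^{2+\delta+\alpha}$, while \eqref{condition on k0} only controls weights with $s+\alpha\le2$.
\end{enumerate}
This is why the paper's Lemma~\ref{f1+ estimate lemma} works at level $1+\delta$ with $\delta<1-\alpha$. It is also why Lemma~\ref{H3+ estimate lemma} replaces the quartic coefficient you inherit from Section~\ref{local-in-time sec} by one linear in $|\nabla f|_{C^\delta}+|\nabla^2 f|_{C^\delta}$, so that the time integral in \eqref{bound on f in higher fourier norm 1} can be used in Gr\"onwall. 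The requirement $\delta\in(\alpha,1-\alpha)$ there is what forces $\alpha<\frac12$.

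Be aware that the paper also asserts the level-$(2+\delta)$ bound \eqref{bound on f in higher fourier norm 2} ``by repeating the same argument'' and uses it for the lower-order factor $p_2(\|\nabla^2 f\|_{L^\infty})$. The weight count above applies there too, so you cannot simply borrow that bound to rescue your Step~3.
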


\begin{rem}
Note that the global-in-time existence result only holds for  $0 \le \alpha < \frac{1}{2}$. This is due to the fact that we have to ensure the set $(\alpha, 1 - \alpha )$ is non-empty. It is unknown whether we have global-in-time existence of solutions when $ \frac{1}{2} \le \alpha < 1 $.
\end{rem}

\begin{rem}
The existence of $k_0(\alpha)$ that appears in Theorem~\ref{global existence thm} is shown in Lemma~\ref{existence of k0 alpha lem} in Appendix~\ref{existence of k0 and estimates on principal values sec}.
\end{rem}

Before we give the proof of Theorem~\ref{global existence thm}, we prove some lemmas which are crucial for our analysis. {To begin with, we state and prove the following lemma for estimating a non-linear term $N_\alpha (f)$.

\begin{lemma}\label{estimate on non-linear N lem}
For $\alpha\in(0,\frac{1}{2})$, define
\begin{equation}\label{def of non-linear term N}
N_\alpha (f) = \frac{1}{2\pi} \int_{\mathbb{R}^2} \frac{y}{|y|^{2 + \alpha}} \nabla_x (D_y f(x)) \, R_\alpha (D_y f(x)) \, dy,
\end{equation}
where $R_\alpha$ and $D_y$ are given by
\begin{align*}
R_\alpha(z) &\triangleq 1 - \frac{1}{(1 + z^2)^{\frac{3 + \alpha}{2}}}
\end{align*}
and
\begin{align*}
D_y f (x)&\triangleq \left( f(x) - f(x - y)\right) |y|^{-1}.
\end{align*}
Then under the condition $\left\| f_{0} \right\|_{1} < k_{0}(\alpha)$, it holds 
\begin{equation}\label{estimate on N(f) term}
\int_{\mathbb{R}^{2}} |\xi| \, |\widehat{N_{\alpha}(f)}|(\xi) \, d\xi < \int_{\mathbb{R}^{2}} |\xi|^{2+\alpha} |\hat{f}(\xi)| \, d\xi.
\end{equation}
\end{lemma}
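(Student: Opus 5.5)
The plan is the Fourier-series argument of Constantin et al.\ \cite{constantin2016muskat}, adapted to the kernel $y|y|^{-2-\alpha}$. First we expand $R_\alpha$ as a power series. For $|z|<1$ the binomial series gives
\begin{align*}
R_\alpha(z)=\sum_{n=1}^\infty (-1)^{n+1}\frac{\Gamma\left(\frac{3+\alpha}{2}+n\right)}{\Gamma\left(\frac{3+\alpha}{2}\right)n!}\,z^{2n}.
\end{align*}
Since $|D_yf(x)|\le\|\nabla f\|_{L^\infty}\le\|f\|_1$, this expansion is legitimate once $\|f\|_1<1$. We will take $k_0(\alpha)<1$, which is compatible with \eqref{condition on k0}. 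Inserting the series into \eqref{def of non-linear term N} writes $N_\alpha(f)$ as a sum over $n\ge1$ of terms of the form
\begin{align*}
\int_{\R^2}\frac{y}{|y|^{2+\alpha}}\,\nabla_x(D_yf)(x)\,(D_yf(x))^{2n}\,dy,
\end{align*}
each of which is multilinear of degree $2n+1$ in $f$.

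Next we compute the Fourier transform of each term. We have $\widehat{D_yf}(\xi)=\hat f(\xi)(1-e^{-iy\cdot\xi})|y|^{-1}$, so the $n$-th term becomes a $(2n+1)$-fold convolution in frequency:
\begin{align*}
\int_{\R^2}\cdots\int_{\R^2}\; i\xi_1\,\hat f(\xi_1)\cdots\hat f(\xi_{2n+1})\, m_n(\xi_1,\dots,\xi_{2n+1})\; d\xi_1\cdots d\xi_{2n},
\end{align*}
where $\xi_{2n+1}=\xi-\xi_1-\dots-\xi_{2n}$ and the multiplier is
\begin{align*}
m_n=\int_{\R^2}\frac{y}{|y|^{2+\alpha+2n+1}}\prod_{j=1}^{2n+1}\bigl(1-e^{-iy\cdot\xi_j}\bigr)\,dy .
\end{align*}

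The heart of the proof, and the step I expect to be the main obstacle, is a bound on this multiplier of the form
\begin{align*}
|m_n|\le C(\alpha)\,(2n+1)\,\max_j|\xi_j|^{1+\alpha}.
\end{align*}
I would first try splitting the $y$-integral at $|y|=1/\max_j|\xi_j|$.
\begin{itemize}
\item On the small-$|y|$ region, use $|1-e^{-iy\cdot\xi_j}|\le|y||\xi_j|$ on every factor.
\item On the large-$|y|$ region, use that bound on one factor only and the bound $2$ on the rest.
\end{itemize}
Convergence of the two resulting radial integrals is where the restriction $\alpha<1/2$ should enter, through the need for an exponent in $(\alpha,1-\alpha)$, as the remark after Theorem~\ref{global existence thm} suggests. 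The combinatorial growth in $n$ must be kept polynomial, at most the factor $(2n+1)$ above. If the crude splitting does not give this, the fallback is to telescope the product $\prod_j(1-e^{-iy\cdot\xi_j})$ as in \cite{constantin2016muskat}. Presumably the estimates on principal values in Appendix~\ref{existence of k0 and estimates on principal values sec} supply exactly this kind of bound.

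Finally we multiply by $|\xi|$ and integrate. We use $|\xi|\le\sum_j|\xi_j|$ and distribute the $(2n+1)$ resulting terms: the factor carrying the largest frequency takes $|\xi_j|^{2+\alpha}$ (counting the $i\xi_1$ factor), and every other factor is bounded by $\|f\|_1$. Young's convolution inequality for $L^1$ then gives
\begin{align*}
\int_{\R^2}|\xi|\,|\widehat{N_\alpha(f)}|(\xi)\,d\xi\le 2C(\alpha)\sum_{n\ge1}(2n+1)^2\frac{\Gamma\left(\frac{3+\alpha}{2}+n\right)}{\Gamma\left(\frac{3+\alpha}{2}\right)n!}\,\|f\|_1^{2n}\,\|f\|_{2+\alpha}.
\end{align*}
The statement assumes only $\|f_0\|_1<k_0(\alpha)$, so the bound has to be used at times where $\|f\|_1<k_0(\alpha)$; in the global argument this is propagated by continuity. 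With $\|f\|_1<k_0(\alpha)$, condition \eqref{condition on k0} makes the coefficient sum strictly less than $1$, which yields \eqref{estimate on N(f) term}.
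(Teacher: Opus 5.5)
There is a genuine gap, and it sits in the multiplier estimate you identify as the heart of the proof.

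\textbf{The target bound is impossible.} The bound $|m_n|\le C(\alpha)(2n+1)\max_j|\xi_j|^{1+\alpha}$ cannot hold. Substituting $y\mapsto y/\lambda$ in your formula gives $m_n(\lambda\xi_1,\dots,\lambda\xi_{2n+1})=\lambda^{2n+\alpha}m_n(\xi_1,\dots,\xi_{2n+1})$. For $n\ge1$, a bound homogeneous of degree $1+\alpha$ would therefore force $m_n\equiv0$.

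\textbf{It also does not feed your last step.} The symbol $|\xi|\,|\xi_1|\max_j|\xi_j|^{1+\alpha}$ has degree $3+\alpha$. But $\|f\|_1^{2n}\|f\|_{2+\alpha}$ requires degree $2n+2+\alpha$, with one power $|\xi_j|$ on every non-maximal factor. So ``every other factor is bounded by $\|f\|_1$'' is not available.

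\textbf{The splitting does not repair this.} On $|y|>1/\max_j|\xi_j|$, bounding all factors but one by $2$ gives a contribution of size about $4^n(2n-1+\alpha)^{-1}|\xi_k|\max_j|\xi_j|^{2n-1+\alpha}$. All the homogeneity then sits on a single frequency, the non-maximal factors carry no weight, and the constant grows exponentially in $n$. After integration this cannot be bounded by $\|f\|_1^{2n}\|f\|_{2+\alpha}$, since $\hat f$ may be concentrated at low frequencies.

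\textbf{The missing idea.} The $2n$ factors destined for $\|f\|_1$ must receive the Lipschitz bound $|1-e^{-iy\cdot\xi_j}|\le|y|\,|\xi_j|$ for all $y$, not only for small $|y|$. Only the one remaining factor $j=k$ should be estimated by its size or oscillation. This gives, for any choice of $k$,
\begin{align*}
|m_n|\le\prod_{j\ne k}|\xi_j|\int_{\R^2}\frac{|1-e^{-iy\cdot\xi_k}|}{|y|^{2+\alpha}}\,dy=c(\alpha)\,|\xi_k|^{\alpha}\prod_{j\ne k}|\xi_j|.
\end{align*}
The integral converges absolutely precisely because $0<\alpha<1$; the restriction $\alpha<\frac12$ plays no role in this lemma. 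Choose $k$ with $|\xi_k|=\max_j|\xi_j|$ and use $|\xi|\le(2n+1)|\xi_k|$ and $|\xi_1|\le|\xi_k|$. Each integrand is then bounded by $(2n+1)c(\alpha)|\xi_k|^{2+\alpha}\prod_{j\ne k}|\xi_j|$. Summing over the possible positions of the maximum and applying Young's inequality gives your final inequality, with the factor $(2n+1)^2$.

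\textbf{Comparison with the paper.} The paper reaches the same structure by a different route. In polar coordinates it writes $2n$ of the factors as $1-e^{-ir\,\xi\cdot u}=ir\,(\xi\cdot u)\int_0^1e^{-isr\,\xi\cdot u}\,ds$, the device from \cite{constantin2016muskat} you mention as a fallback. It then controls the remaining oscillatory radial integral with the principal-value bound of Lemma~\ref{estimates on PV lem}. This produces $2n$ frequency sizes times $(\sum_j|\eta_j|)^{\alpha}$, where the $\eta_j$ are the $2n+1$ frequency arguments. The absolute-value route needs no cancellation, but its constant blows up like $1/\alpha$; an oscillatory argument is what is needed at $\alpha=0$.
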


\begin{rem}
Lemma~\ref{estimate on non-linear N lem} shows that the linear dissipation dominates non-linear growth, which is essential for proving the forthcoming Lemma~\ref{f1 estimate lemma}.
\end{rem}

\begin{proof}
To prove \eqref{estimate on N(f) term}, we apply the method which is reminiscent of the one given in \cite{constantin2016muskat}. By changing variables, the term $N_\alpha(f)$ can be rewritten as
\begin{equation*}
  N_\alpha(f) = \frac{1}{4\pi} \int_{\mathbb{R}^2}  \nabla_x \left(D_y f(x)\right) \,  R_\alpha(D_{y} f(x)) - \nabla_x (D_{-y} f(x)) \, R_\alpha(D_{-y} f(x))  dy.
\end{equation*}
For \( |z| < 1 \), we have the following Taylor's series expansion for \( R_\alpha(z) \):
\begin{equation*}
R_{\alpha}(z) = - \sum_{n=1}^{\infty} (-1)^n a_n z^{2n},
\quad a_n = \frac{\Gamma\left(\frac{3 + \alpha}{2} + n\right)}{\Gamma\left(\frac{3 + \alpha}{2}\right)} \cdot \frac{1}{n!},
\end{equation*}
where $\Gamma(\cdot)$ is the Gamma function. Hence if $|D_{y} f(x)| \leq \left\|f\right\|_1$, then we have
\begin{equation*}
N_{\alpha}(f) = - \frac{1}{4 \pi} \sum_{n\geq1}^{\infty} (-1)^n a_n \int_{\mathbb{R}^2} \frac{y}{|y|^{2+\alpha}}  \left( \nabla_x (D_y  f) \, (D_y f)^{2n} - \nabla_{x} (D_{-y} f) (D_{-y} f)^{2n} \right) dy.
\end{equation*}
Recall the Fourier transform for $D_y  f$ and $ \nabla_x D_y  f$ are respectively given by
\begin{equation*}
\widehat{D_y  f} = \hat{f}{m}(\xi, y), \quad \widehat{\nabla_x D_y  f} = i \xi  \hat{f}(\xi){m}(\xi, y),
\end{equation*}
where the Fourier multiplier $m(\xi, y)$ is given by
\begin{align*}
m(\xi, y) = \frac{1 - e^{-i \xi \cdot y}}{|y|}.
\end{align*}
Hence we have
\begin{equation}\label{def of difference operator}
\reallywidehat{\nabla_x \left(D_y f\right) \, (D_y f)^{2n}} = \left((i \xi \hat{f}_m) * (\hat{f}_m) * \cdots * (\hat{f}_m)\right)(\xi, \alpha).
\end{equation}
By applying convolutions, we get
\begin{equation*}
  \begin{split}
\reallywidehat{{N}_\alpha(f)}(\xi) =&  \frac{-i}{4\pi}   \sum_{n\geq1} (-1)^n a_n \int_{\mathbb{R}^2} dy \int_{\mathbb{R}^2} d\xi_1 \cdots \int_{\mathbb{R}^2}  d\xi_{2n} \,\frac{y}{|y|^{2+\alpha}} \left(\xi-\xi_1\right)\\
&\times \hat{f}\left( \xi - \xi_1 \right)\left(\prod_{j=1}^{2n-1}    \hat{f}\left( \xi_{j} - \xi_{j+1} \right)\right)\hat{f}\left( \xi_{2n}  \right)  \left( M_{n}(y) - M_{n}(-y) \right),
 \end{split}
\end{equation*}
where $M_{n}$ is defined by
\begin{equation*}
M_{n}(y) \triangleq m \left( \xi - \xi_{1}, y \right) \left( \prod_{j=1}^{2 n - 1} m \left( \xi_{j} - \xi_{j+1}, y \right). \right) m \left( \xi_{2 n}, y \right).
\end{equation*}
Using  Fubini's theorem, we further get
\begin{equation*}
  \begin{split}
\reallywidehat{{N}_\alpha(f)}(\xi) =& \sum_{n \geq  1} a_n \int_{\mathbb{R}^2} d\xi_1\cdots \int_{\mathbb{R}^2} d\xi_{2n} \\
&\times \left( \xi - \xi_{1} \right) \hat{f}\left( \xi - \xi_{1} \right) \left(  \prod_{j=1}^{2 n - 1} \hat{f}\left( \xi_{j} - \xi_{j+1} \right)\right) \hat{f} \left( \xi_{2 n} \right) \cdot I_{n,\alpha}
 \end{split}
\end{equation*}
 with
 $$I_{n,\alpha} =  \frac{-i}{4\pi} (-1)^{n} \int_{\mathbb{R}^2} \frac{y}{|y|^{2 + \alpha}} \left( M_{n} (y) - M_{n}(-y) \right) dy.$$
Using the polar coordinates $(r,\theta)$ that $y = (r\cos \theta, r\sin \theta)$, $I_{n,\alpha}$ can be rewritten as
\begin{align*}
I_{n,\alpha}=   \frac{-i}{4\pi} (-1)^{n} \int_{-\pi}^{\pi} u \, d\theta \int_{0}^{+\infty}  \, dr \left( M_{n}(r, u) - M_{n}(r, -u) \right)
\end{align*}
with $m(\xi, r, u)$ and $M_{n}(r, u)$ being given by
\begin{equation*}
 m(\xi, r, u) =  \frac{\left(1-\exp ^{ir\xi \cdot u}\right)}{r|r|^{\alpha}}=\frac{i \xi \cdot u}{|r|^{\alpha}} \int_{0}^{1} e^{i r (s-1) \xi \cdot u} ds
\end{equation*}
and
\begin{align*}
M_{n}(r, u) &= m(\xi - \xi_1, r, u) \left( \prod_{j=1}^{2n-1} m(\xi_j - \xi_{j+1}, r, u) \right)  m\left(\xi_{2n}, r, u  \right)\\
&= (-1)^{n} \int_{0}^{1} ds_{1} \cdots \int_{0}^{1} ds_{2n} \left( \prod_{j=1}^{2n-1}  \left( \xi_{j} - \xi_{j+1} \right) \cdot u \right) \xi_{2n}\cdot u  \\
&\qquad\times \left(  \frac{e^{-i r \mathbf{S}_1\cdot u}}{r|r|^{\alpha}}   - \frac{e^{-i r \mathbf{S}_2\cdot u }}{r|r|^{\alpha}}  \right),
\end{align*}
where the terms $\mathbf{S}_1$ and $\mathbf{S}_2$ are defined by
\begin{align*}
\mathbf{S}_1 &= \sum_{j=1}^{2n-1} (s_{j} - 1) (\xi_{j} - \xi_{j+1}) + (s_{2n}-1) \xi_{2n},\\
\mathbf{S}_2 &= - (\xi - \xi_{1}) + \sum_{j=1}^{2n-1} (s_{j} - 1) (\xi_{j} - \xi_{j+1}) + (s_{2n}-1) \xi_{2n}.
\end{align*}
with $0 \leq s_j \leq 1$ and $ 1 \leq j \leq 2n$. Note that $m(\xi,-r,u)=-m(\xi,r,-u)$ and $-m(\xi,-r,-u)=m(\xi,r,u)$, upon changing of variables, it yields
\begin{align*}
I_{n,\alpha}=   \frac{-i}{8\pi} (-1)^{n} \int_{-\pi}^{\pi} u \, d\theta \int_{\R}  \, dr \left( M_{n}(r, u) - M_{n}(r, -u) \right).
\end{align*}
Since for $0 < \alpha < \frac{1}{2}$ and $\mathbf{S}\in\{\mathbf{S}_1, \mathbf{S}_2\}$,
\begin{align*}
|\mathbf{S}|^{\alpha} \leq \left[ \left| \xi - \xi_1 \right| + \left|\xi_1 - \xi_2\right| + \cdots + \left| \xi_{2n-1} - \xi_{2n} \right| +\left|\xi_{2n} \right| \right]^\alpha 
\end{align*}
and using Lemma~\ref{estimates on PV lem} in Appendix~\ref{existence of k0 and estimates on principal values sec}, we have
\begin{align*}
 \left|\PV\int_{0}^{+\infty}\frac{\exp(i r \mathbf{S})}{r^{1+\alpha}} \right|\leq C(\alpha)|\mathbf{S}|^{\alpha},
\end{align*}
for some positive constant $C(\alpha)$, hence we can estimate $I_{n,\alpha}$ as follows:
\begin{align*}
I_{n,\alpha} &= - \frac{i}{4 \pi} \int_{-\pi}^{\pi} u \, d\theta \quad \int_{0}^{1} ds_{1} \cdots \int_{0}^{1} ds_{2n} \left( \prod_{j=1}^{2n+1} (\xi_{j} - \xi_{j+1}) \right) \cdot u \\
&\quad \times \int_{0}^{+\infty} \left( \frac{ \exp(i r \mathbf{S}_1 \cdot u) }{ r^{1+\alpha} } - \frac{ \exp(i r \mathbf{S}_2 \cdot u) }{ r^{1+\alpha} } - \frac{ \exp(-i r \mathbf{S}_1 \cdot u) }{ r^{1+\alpha} } + \frac{ \exp(-i r \mathbf{S}_2 \cdot u) }{ r^{1+\alpha} } \right) dr\\
&\le2C(\alpha) \cdot \prod_{j=1}^{2n+1} \left| \xi_{j} - \xi_{j+1} \right| \left| \xi_{2n} \right| \\
&\qquad\qquad\qquad\qquad\qquad\times \left[ \left| \xi - \xi_1 \right| + \left|\xi_1 - \xi_2\right| + \cdots + \left| \xi_{2n-1} - \xi_{2n} \right| +\left|\xi_{2n} \right| \right]^\alpha.
\end{align*}
Therefore we have
\begin{equation}\nonumber
\begin{aligned}
\int_{\mathbb{R}^{2}} |\xi| \, |\widehat{N_{\alpha}(f)}|(\xi) &\leq 2 \left| {\Gamma}(-\alpha) \right|  \sum_{n\geq1}  a_n \int_{\mathbb{R}^2} d\xi \int_{\mathbb{R}^2} d\xi_1 \cdots \int_{\mathbb{R}^2}  d\xi_{2n}|\xi|\\
& \times \left|\xi-\xi_{1} \right| \left| \hat{f}(\xi  - \xi_{1} )  \right|   \prod_{j=1}^{2n+1} \left| \xi_{j} - \xi_{j+1} \right| \left| \hat{f}(\xi_{j} - \xi_{j+1} )  \right| \\
& \times\left| \xi_{2n} \right|  \left| \hat{f}(\xi_{2n})\right| \times   \left[ \left| \xi - \xi_1 \right| + \left|\xi_1 - \xi_2\right| + \cdots + \left| \xi_{2n-1} - \xi_{2n} \right| +\left|\xi_{2n} \right| \right]^\alpha.
\end{aligned}
\end{equation}
By the triangle inequality $|\xi| \leq |\xi - \xi_1| + \cdots + |\xi_{2n-1} - \xi_{2n}| + |\xi_{2n}|$ and the fact that
\begin{equation}\nonumber
\begin{aligned}
&\left[ |\xi - \xi_1| + \cdots + |\xi_{2n-1} - \xi_{2n}| + |\xi_{2n}| \right]^{1+\alpha}\\
& \quad \leq (2n+1)^{\alpha} \left[ \left|\xi - \xi_1\right|^{1+\alpha} + \left|\xi_1 - \xi_2\right|^{1+\alpha} + \cdots + \left|\xi_{2n-1} - \xi_{2n}\right|^{1+\alpha} + \left|\xi_{2n}\right|^{1+\alpha} \right],
\end{aligned}
\end{equation}
we further obtain
\begin{align}\label{estimate on N(f) term 2}
\int_{\mathbb{R}^2} |\xi| \, |\hat{N}_\alpha (f)(\xi)| \, d\xi &\leq 2 C(\alpha) \sum_{n=1}^{\infty}    \left( 2n+1 \right)^{1+\alpha}  a_{n} \left( \int_{\mathbb{R}^2} |\xi| \, |\hat{f}(\xi)| \right)^{2n}\notag\\
&\qquad \times \left( \int_{\mathbb{R}^2} |\xi|^{2+\alpha} |\hat{f}(\xi)| \right) \notag\\
&\leq \left( \int_{\mathbb{R}^2} |\xi|^{2+\alpha} |\hat{f}(\xi)| \right) \left[ 2 C(\alpha) \sum_{n=1}^{\infty}   \left( 2n+1 \right)^{1+\alpha} a_n  \left\| {f}\right\|_{1}^{2n} \right] \notag\\
&< \int_{\mathbb{R}^2} |\xi|^{2+\alpha} |\hat{f}(\xi)| \left[ 2 C(\alpha) \sum_{n=1}^{\infty}   \left( 2n+1 \right)^{2}a_n  \left\| {f}\right\|_{1}^{2n} \right].
\end{align}
To bound the summation term on the right side of \eqref{estimate on N(f) term 2}, for $|z|<1$, we recall that $$1-(1+z^{2})^{-\frac{3+\alpha}{2}} =- \sum_{n=1}^{\infty}  (-1)^n a_n  z^{2n}.$$
Multiply the above by $ z $, differentiate with respect to $z$ and replace $z$ by $iz$,
\begin{equation*}
\frac{1+(2+\alpha)z^2}{(1-z^2)^{\frac{5+\alpha}{2}}} -1= \sum_{n=1}^{\infty}    a_{n} (2n + 1)z^{2n}.
\end{equation*}
By repeating the process once more, we obtain
\begin{equation}\label{power series for an}
(1 - z^2)^{-\frac{7+\alpha}{2}} \left[ (10 + 4\alpha) z^2 + (2 + \alpha)^{2} z^{4} \right] - 1 = \sum_{n=1}^{\infty}  a_n (2n + 1)^{2} z^{2n}.
\end{equation}
By applying the identity \eqref{power series for an}, for $\|f\|_{1}<1$, we have
\begin{equation*}
\sum_{n=1}^{\infty}  (2n+1)^2 a_n \left\| f  \right\|_{1}^{2n} = \left( 1 - \left\| f  \right\|_{1}^{2} \right)^{-\frac{7+\alpha}{2}} \left[ (10 +4\alpha) \left\| f\right\|_{1}^2 + (2 + \alpha)^2 \left\| f \right\|_1^4 \right]-1.
\end{equation*}
Given $\alpha \in (0,\frac{1}{2})$, by Lemma~\ref{existence of k0 alpha lem} in Appendix~\ref{existence of k0 and estimates on principal values sec}, choosing a constant $k_0(\alpha) \in (0,1)$ such that for all $0 \leq z < k_{0}(\alpha),$ we always have
\begin{equation*}
\left[ \frac{(10 + 4 \alpha) z^2 + (2 + \alpha) z^4}{(1 - z^2)^{\frac{7 + \alpha}{2}}} -1\right] < \frac{1}{2C(\alpha)},
\end{equation*}
then it implies
\begin{equation*}
2C(\alpha) \sum_{n=1}^{\infty}  (2n+1)^2 a_n \left\|f \right\|_{1}^{2n} < 1,
\end{equation*}
wherever $\left\|f \right\|_{1} < k_0(\alpha)$, and hence the bound \eqref{estimate on N(f) term} follows.
\end{proof}

Using Lemma~\ref{estimate on non-linear N lem}, we state and prove the following lemma for estimating $\|f\|_{1}$.

\begin{lemma}\label{f1 estimate lemma}
Let $\alpha\in[0,\frac{1}{2})$ be given. Assume that $f_0\in H^k(\R^2)$ for some $k\ge4$ and $\|f_0\|_{s}<k_0(\alpha)$, where $k_0(\alpha)$ is a constant that satisfies \eqref{condition on k0}. If $f$ is a classical solution of \eqref{contour eqn} defined on $[0,T]$ with initial data $f_0$, then for any $t\in[0,T]$, we have
\begin{align}\label{bound on f in fourier norm}
\|f\|_{1}(t)\le\|f_0\|_{1}.
\end{align}
\end{lemma}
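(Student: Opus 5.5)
We follow the Wiener-algebra approach of \cite{constantin2016muskat}. First we split \eqref{contour eqn} into a linear dissipative part and a nonlinear remainder. Writing $[|y|^2+(f(x)-f(x-y))^2]^{-\frac{3+\alpha}{2}} = |y|^{-3-\alpha}(1-R_\alpha(D_yf(x)))$, we obtain
\begin{align*}
f_t = \PV\int_{\R^2}\frac{(\nabla f(x)-\nabla f(x-y))\cdot y}{|y|^{3+\alpha}}\,dy \;-\; \PV\int_{\R^2}\frac{(\nabla f(x)-\nabla f(x-y))\cdot y}{|y|^{3+\alpha}}\,R_\alpha(D_yf(x))\,dy .
\end{align*}
The first term is a Fourier multiplier. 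Computing its symbol, the integral $\int (1-e^{-i\xi\cdot y})\, i\xi\cdot y\,|y|^{-3-\alpha}dy$ gives $-c_\alpha|\xi|^{1+\alpha}$ with $c_\alpha>0$, so the linear part is $-c_\alpha\Lambda^{1+\alpha}f$. Using $\nabla f(x)-\nabla f(x-y)=|y|\nabla_x D_yf(x)$, the second term equals (after normalising constants, possibly absorbing $c_\alpha$ by rescaling time) $-2\pi N_\alpha(f)$ as in \eqref{def of non-linear term N}.

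Next we take the Fourier transform and differentiate $\|f\|_1(t)=\int|\xi||\hat f(\xi,t)|d\xi$. Since $\frac{d}{dt}|\hat f|=\mathrm{Re}(\overline{\hat f}\hat f_t)/|\hat f|\le -c_\alpha|\xi|^{1+\alpha}|\hat f| + C|\widehat{N_\alpha(f)}|$, this gives
\begin{align*}
\frac{d}{dt}\|f\|_1 \le -\int|\xi|^{2+\alpha}|\hat f(\xi)|\,d\xi + \int|\xi||\widehat{N_\alpha(f)}(\xi)|\,d\xi,
\end{align*}
with constants matched to the normalisation in Lemma~\ref{estimate on non-linear N lem}. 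This computation is justified for the classical $H^k$ solution, $k\ge4$: there $\|f\|_{2+\alpha}<\infty$, and $|\hat f|$ is differentiable where it does not vanish, so we work with $\sqrt{|\hat f|^2+\epsilon}$ and let $\epsilon\to0$.

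Then we run a continuity (bootstrap) argument. The Taylor expansion used in Lemma~\ref{estimate on non-linear N lem} requires $|D_yf|\le\|f\|_1<k_0(\alpha)<1$, and $|D_yf(x)|\le\|\nabla f\|_{L^\infty}\le\|f\|_1$. Let $T^*=\sup\{t\le T:\|f\|_1(s)<k_0(\alpha)\ \text{on }[0,t]\}$; this is positive by continuity of $t\mapsto\|f\|_1(t)$, which follows from $f\in C^1([0,T];H^k)$ since $\|g\|_1\lesssim\|g\|_{H^k}$ for $k\ge3$. On $[0,T^*)$, Lemma~\ref{estimate on non-linear N lem} applied at each time (its proof only uses $\|f\|_1<k_0(\alpha)$ at that time) gives $\frac{d}{dt}\|f\|_1\le0$. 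Hence $\|f\|_1(t)\le\|f_0\|_1<k_0(\alpha)$, and by continuity $T^*=T$. This yields \eqref{bound on f in fourier norm}. The case $\alpha=0$ is treated the same way, or follows from \cite{constantin2016muskat}.

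The main obstacle is the first step: identifying the linear part exactly and checking that its dissipation constant matches the normalisation of $N_\alpha$ in Lemma~\ref{estimate on non-linear N lem}. The strict inequality \eqref{estimate on N(f) term} leaves no room to lose constants, so we would carefully compute the multiplier via polar coordinates and the same principal value integrals as in Lemma~\ref{estimates on PV lem}, and adjust $C(\alpha)$ in \eqref{condition on k0} if needed.
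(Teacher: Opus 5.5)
Your proposal is correct and follows essentially the same route as the paper: write $f_t=-\Lambda^{1+\alpha}f-N_\alpha(f)$, differentiate $\|f\|_1$ on the Fourier side, and use Lemma~\ref{estimate on non-linear N lem} to get $\frac{d}{dt}\|f\|_1\le 0$. Your continuity argument and your remark that the multiplier constant $c_\alpha$ must be absorbed into $C(\alpha)$ make explicit two points the paper leaves implicit: the continuity argument is needed because that lemma really uses $\|f\|_1(t)<k_0(\alpha)$ at the current time, not just at $t=0$, and the constant must go into $C(\alpha)$ because rescaling time alone does not change the ratio of the linear and nonlinear terms.
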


\begin{proof}
The case for $\alpha=0$ was proved in \cite{constantin2016muskat}, hence it suffices to consider the case when $\alpha\in(0,\frac{1}{2})$. First of all, we rewrite the contour equation \eqref{contour eqn} to obtain
\begin{equation*}
   \frac{\partial}{\partial t} f(x,t) = - \Lambda^{1+\alpha} (f) - N_\alpha(f), 
\end{equation*}
where $N_\alpha (f)$ satisfies \eqref{def of non-linear term N}. Recalling the definition \eqref{def of fourier norm} of \( \left\| f \right\|_{1} \) and the contour equation \eqref{contour eqn}, we have
\begin{equation}\nonumber
\begin{split}
\frac{d}{dt} \left\| f\right\|_{1}(t) &= \int_{\mathbb{R}^{2}}|\xi| \left(|\partial_t\hat{f}(\xi) \overline{\hat{f}(\xi)}|  \, + \hat{f}(\xi) \partial_t\hat{f}(\xi)\right) \, / \, 2 |\hat{f}(\xi)| d\xi  \\
&\leq - \int_{\mathbb{R}^{2}} |(\xi)|^{2+\alpha} |\hat{f}(\xi)| \, d\xi + \int_{\mathbb{R}^{2}} |\xi| \, |\widehat{N_{\alpha}(f)}|(\xi) \, d\xi.
\end{split}
\end{equation}
Therefore if the condition $\left\|f\right\|_{1} < k_{0}(\alpha)$ holds, using the bound \eqref{estimate on N(f) term}, we have
\begin{align*}
\frac{d}{dt} \left\| f\right\|_{1}(t) &\leq - \int_{\mathbb{R}^{2}} |(\xi)|^{2+\alpha} |\hat{f}(\xi)| \, d\xi + \int_{\mathbb{R}^{2}} |\xi| \, |\widehat{N_{\alpha}(f)}|(\xi) \, d\xi < 0,
\end{align*}
which gives
\begin{equation*}
\left\|f(t)\right\|_{1}(t) \leq \left\|f_0\right\| < k_{0}(\alpha)
\end{equation*}
as desired.
\end{proof}
}
Next, we are going to obtain some higher order estimates on $f$ with the help of \eqref{bound on f in fourier norm}. The results are listed in Lemma~\ref{f1+ estimate lemma} and Lemma~\ref{H3+ estimate lemma}.
\begin{lemma}\label{f1+ estimate lemma}
Assume that the hypotheses and notations of Lemma~\ref{f1 estimate lemma} are in force. Then for any $\delta\in(0,1-\alpha)$ and for any $t\in[0,T]$, we have
\begin{align}\label{bound on f in higher fourier norm 1}
\|f\|_{1+\delta}(t) + \mu(\alpha)\int_0^t\|f\|_{2+\delta+\alpha}(s)ds\le\|f_0\|_{1+\delta}
\end{align}
and
\begin{align}\label{bound on f in higher fourier norm 2}
\|f\|_{2+\delta}(t)\le\|f_0\|_{2+\delta}
\end{align}
for some $\mu(\alpha)>0$.
\end{lemma}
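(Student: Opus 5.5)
We follow the scheme of Lemma~\ref{f1 estimate lemma}: differentiate the weighted Fourier norm $\|f\|_{1+\delta}$ in time and show that the linear dissipation absorbs the nonlinearity. We start from the reformulation $\partial_t f=-\Lambda^{1+\alpha}f-N_\alpha(f)$ used there. Computing $\frac{d}{dt}\|f\|_{1+\delta}$ exactly as for $\|f\|_1$ gives
\begin{align*}
\frac{d}{dt}\|f\|_{1+\delta}(t)\le -\|f\|_{2+\delta+\alpha}(t)+\int_{\R^2}|\xi|^{1+\delta}|\widehat{N_\alpha(f)}(\xi)|\,d\xi .
\end{align*}
The task is to bound the last integral by $(1-\mu(\alpha))\|f\|_{2+\delta+\alpha}$, with $\mu(\alpha)>0$ independent of $t$.

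To estimate the nonlinear term, we redo the Fourier expansion from the proof of Lemma~\ref{estimate on non-linear N lem} with the weight $|\xi|$ replaced by $|\xi|^{1+\delta}$. Since Lemma~\ref{f1 estimate lemma} gives $\|f\|_1(t)\le\|f_0\|_1<k_0(\alpha)<1$, the pointwise condition $|D_yf|\le\|f\|_1<1$ holds and the power series for $R_\alpha$ converges. The bound on $I_{n,\alpha}$ is unchanged, so each term of the series carries the factors $|\xi-\xi_1|\prod|\xi_j-\xi_{j+1}|\,|\xi_{2n}|$ times $[\sum|\cdot|]^{\alpha}$ times $|\xi|^{1+\delta}$. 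By the triangle inequality, $|\xi|^{1+\delta}[\sum|\cdot|]^\alpha\le[\sum|\cdot|]^{1+\delta+\alpha}\le(2n+1)^{\delta+\alpha}\sum|\cdot|^{1+\delta+\alpha}$. In each resulting term, exactly one factor carries the weight $|\cdot|^{2+\delta+\alpha}$ and the other $2n$ factors carry $|\cdot|^{1}$. Using symmetry over the choice of that factor, we obtain
\begin{align*}
\int|\xi|^{1+\delta}|\widehat{N_\alpha(f)}|\le 2C(\alpha)\sum_{n\ge1}(2n+1)^{1+\delta+\alpha}a_n\|f\|_1^{2n}\,\|f\|_{2+\delta+\alpha}.
\end{align*}
Because $\delta+\alpha<1$, we have $(2n+1)^{1+\delta+\alpha}\le(2n+1)^2$. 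Together with $\|f\|_1\le\|f_0\|_1<k_0(\alpha)$ and \eqref{condition on k0}, this makes the bracket at most $1-\mu(\alpha)$, where
\[
\mu(\alpha):=1-2C(\alpha)\sum_n(2n+1)^2a_n\|f_0\|_1^{2n}>0 .
\]
The quantity $\mu(\alpha)$ depends only on the data and is uniform in $t$, since the sum is monotone in $\|f\|_1$. Integrating $\frac{d}{dt}\|f\|_{1+\delta}+\mu(\alpha)\|f\|_{2+\delta+\alpha}\le0$ over $[0,t]$ yields \eqref{bound on f in higher fourier norm 1}. This is where the restriction $\delta\in(0,1-\alpha)$ is used.

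For \eqref{bound on f in higher fourier norm 2}, we apply the same computation with weight $|\xi|^{2+\delta}$. The nonlinearity is then bounded by $2C(\alpha)\sum(2n+1)^{2+\delta+\alpha}a_n\|f\|_1^{2n}\|f\|_{3+\delta+\alpha}$, with dissipation $-\|f\|_{3+\delta+\alpha}$. Here the power $(2n+1)^{2+\delta+\alpha}$ exceeds $(2n+1)^2$, so the smallness condition does not transfer directly. This is the main obstacle.

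We would handle it in two ways, trying the first before the second.
\begin{itemize}[leftmost=*]
\item \emph{Use an interpolation split.} Write $\sum|\cdot|^{2+\delta+\alpha}$ via $|\xi|^{2+\delta}\le|\xi|^{1}\cdot|\xi|^{1+\delta}$ distributed over factors. Then place the extra derivative on a second factor rather than raising the combinatorial power, controlling that factor by $\|f\|_{1+\delta}$ or $\|f\|_{2+\delta+\alpha}$. Combined with \eqref{bound on f in higher fourier norm 1} and a Gr\"onwall-type argument, this would show $\frac{d}{dt}\|f\|_{2+\delta}\le0$ whenever the bracket stays below $1$.
\item \emph{Use a convexity estimate.} Failing that, note that $|\xi|^{2+\delta}\big(\sum|\cdot|\big)^{\alpha}$ can be bounded using $(2n+1)^{1+\delta+\alpha}$ instead of $(2n+1)^{2+\delta+\alpha}$. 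The extra power of $(2n+1)$ arising from $|\xi|\le\sum|\cdot|$ would be absorbed via the convexity estimate $\big(\sum x_j\big)^{p}\le(2n+1)^{p-1}\sum x_j^p$ applied only to the $\alpha+\delta$ part. This keeps the series dominated by \eqref{condition on k0}, and monotonicity of $\|f\|_{2+\delta}$ then follows.
\end{itemize}
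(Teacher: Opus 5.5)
Your proof of \eqref{bound on f in higher fourier norm 1} follows the paper's argument. The weight $|\xi|^{1+\delta}$ produces the factor $(2n+1)^{1+\delta+\alpha}\le(2n+1)^2$, and \eqref{condition on k0} then gives $\mu(\alpha)>0$; defining $\mu$ with $\|f_0\|_1$ in place of $k_0(\alpha)$ is equally fine. For \eqref{bound on f in higher fourier norm 2} you are right that the weight $|\xi|^{2+\delta}$ yields $2C(\alpha)\sum_n(2n+1)^{2+\delta+\alpha}a_n\|f\|_1^{2n}\|f\|_{3+\delta+\alpha}$, which \eqref{condition on k0} does not control. The paper handles this case only by saying it follows ``by repeating the same argument'', which runs into exactly this problem. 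However, neither of your two remedies closes the gap, so your proposal does not prove \eqref{bound on f in higher fourier norm 2}.

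\emph{The convexity route cannot work.} Write $\Sigma=\sum_{j=0}^{2n}|\eta_j|$ for the $2n+1$ frequencies in the convolution. Suppose you use a pointwise bound $|\xi|^{2+\delta}\Sigma^{\alpha}\le\sum_\beta c_\beta\prod_j|\eta_j|^{\beta_j}$ with $\sum_j\beta_j=2+\delta+\alpha$, and then H\"older, i.e.\ log-convexity of $s\mapsto\|f\|_s$, to get $\prod_j\|f\|_{1+\beta_j}\le\|f\|_1^{2n}\|f\|_{3+\delta+\alpha}$. Taking all $\eta_j$ equal, so that $|\xi|=\Sigma$, shows the total constant satisfies $\sum_\beta c_\beta\ge(2n+1)^{2+\delta+\alpha}$. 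So however you apply $(\sum x_j)^p\le(2n+1)^{p-1}\sum x_j^p$, the power $(2n+1)^{2+\delta+\alpha}$ comes back.

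\emph{The interpolation route fares no better.} Putting the extra derivative on a second factor produces $\|f\|_1^{2n-1}\|f\|_2\|f\|_{2+\delta+\alpha}$ with coefficient $2C(\alpha)\,2n(2n+1)^{1+\delta+\alpha}a_n$. There are two ways to treat this term.
\begin{itemize}
\item If you absorb it via $\|f\|_2\|f\|_{2+\delta+\alpha}\le\|f\|_1\|f\|_{3+\delta+\alpha}$ and add the diagonal part, you reproduce exactly the $(2n+1)^{2+\delta+\alpha}$ series.
\item If you keep it as a forcing term, e.g.\ with $\|f\|_2\le\|f\|_1^{\delta/(1+\delta)}\|f\|_{2+\delta}^{1/(1+\delta)}$ and $\int_0^t\|f\|_{2+\delta+\alpha}\,ds\le\|f_0\|_{1+\delta}/\mu(\alpha)$, then Gr\"onwall gives only $\sup_t\|f\|_{2+\delta}(t)\le C(\|f_0\|_{1+\delta},\|f_0\|_{2+\delta},\alpha)$. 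This constant is strictly larger than $\|f_0\|_{2+\delta}$, and the argument never gives a sign for $\frac{d}{dt}\|f\|_{2+\delta}$.
\end{itemize}

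Within this pointwise framework, the inequality with constant one needs a stronger smallness hypothesis, for instance $2C(\alpha)\sum_n(2n+1)^3a_n\|f_0\|_1^{2n}<1$. Under it your first argument applies verbatim, since $(2n+1)^{2+\delta+\alpha}\le(2n+1)^3$. Without it, the correct conclusion is the uniform-in-time bound from the Gr\"onwall route, which would still serve the purpose \eqref{bound on f in higher fourier norm 2} has in Lemma~\ref{H3+ estimate lemma}.
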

\begin{proof}
Recall that by Sobolev inequality, for any $\delta\in(0,1)$, $\|f_0\|_{1+\delta}\le C\|f_0\|_{H^3}$. By the definition \eqref{def of fourier norm},
\begin{equation*}
\frac{d}{dt} \Lambda^{1+\delta}f(x,t) = - \Lambda^{2+\delta + \alpha} (f) - \Lambda^{1+\delta}N_\alpha (f),
 \end{equation*}
and hence
 \begin{equation*}
\frac{d}{dt} \left\|f\right\|_{ 1+\delta   } (t) = - \int_{\mathbb{R}^2} |\xi|^{2+\delta+\alpha} f \, |\hat{f}(\xi)| \, d\xi
+ \int_{\mathbb{R}^2} |\xi|^{1+\delta} \hat{N}_{\alpha} \, | \xi)| \, d\xi.
\end{equation*}
Using the same method as given in the proof of Lemma~\ref{f1 estimate lemma}, for any $0<\delta<1-\alpha$, we have
\begin{align*}
\int_{\mathbb{R}^2} |\xi|^{1+\delta } |\hat{N_{\alpha}}|(\xi)\, d&\xi \leq 2C(\alpha) \sum_{n=1}^{\infty}  (2n+1)^{1+\delta+\alpha} a_n \left( \int_{ \mathbb{R}^2}   |\xi| \hat{f}(\xi)| \,\right)^{2n} \\
&\qquad\qquad\qquad\qquad\qquad\qquad\qquad\qquad\qquad\times \left( \int_{ \mathbb{R}^2}   |\xi|^{2+\delta+\alpha} |\hat{f}(\xi)|  \right)\\
&\leq \left(\int_{ \mathbb{R}^2}   |\xi|^{2+\delta+\alpha} |\hat{f}|  \right)  \left[ \, 2{|\Gamma {(- \alpha)}|} \sum_{n=1}^{\infty} (2n+1)^2 { a_n } \left\|f\right\|_1^{2n} \right].
\end{align*}
For each $\alpha\in(0,\frac{1}{2})$, we define $\mu(\alpha)\in(0,1)$ by
\begin{align*}
\mu(\alpha)=1-\left[ \, 2C(\alpha) \sum_{n=1}^{\infty} (2n+1)^2 { a_n } k_0(\alpha)^{2n} \right],
\end{align*}
then by the bound \eqref{condition on k0}, it implies $\mu(\alpha)>0$, and hence we have
\begin{equation*}
\frac{d}{dt} \left\|f\right\|_{ {1+\delta}}(t) + \mu(\alpha)\int_{ \mathbb{R}^2}   |\xi|^{2+\delta+\alpha} |\hat{f}|   \, d\xi\leq 0,
\end{equation*}
Upon integrating over $t$, the bound \eqref{bound on f in higher fourier norm 1} follows. By repeating the same argument, we can show that the bound \eqref{bound on f in higher fourier norm 2} holds as well.
\end{proof}
\begin{lemma}\label{H3+ estimate lemma}
Assume that the hypotheses and notations of Lemma~\ref{f1 estimate lemma} are in force. Then for any $\delta\in(\alpha,1-\alpha)$, we have
\begin{align}\label{bound on f in higher Hk norm}
\sup_{0\le t\le T}\|f\|_{H^{k}}(t)\le \bar{C}\|f_0\|_{H^{k}},
\end{align}
where $\bar{C}$ is a positive constant that depends on $\|f_0\|_{1+\delta}$, $\alpha$ and $k_0(\alpha)$ but is independent of $T$.
\end{lemma}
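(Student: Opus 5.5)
The plan is to rerun the $H^k$ energy estimate from the proof of Theorem~\ref{local existence thm} in a sharper form. Each time a term is bounded by a power of $\|f\|_{C^{2,\delta}}$, I will keep that factor linear, so that Gr\"onwall produces a time integral that Lemma~\ref{f1+ estimate lemma} controls uniformly in $T$.

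\textbf{Step 1: differential inequality.} Apply $\partial_{x_i}^k$ to \eqref{contour eqn}, multiply by $\partial_{x_i}^k f$ and integrate. This gives the same splitting as before: $\mathbf{I}_4=\mathbf{J}_1+\mathbf{J}_2$, with $\mathbf{J}_2$ split into a dissipative term and $\mathbf{L}_1+\mathbf{L}_2+\mathbf{L}_3$, together with the lower-order commutator $\mathbf{I}_5$. The dissipative term
\[
-\tfrac12\iint \frac{(\partial^k_{x_i} f(x)-\partial^k_{x_i} f(y))^2}{[|x-y|^2+(f(x)-f(y))^2]^{\frac{3+\alpha}{2}}}\,dy\,dx
\]
is nonpositive, so I discard it. $\mathbf{L}_2$ vanishes by the symmetry $x\leftrightarrow y$.

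In $\mathbf{J}_1$, $\mathbf{L}_1$ and $\mathbf{L}_3$, the kernel factors $[1+(D_yf)^2]^{-\frac{5+\alpha}{2}}$ are bounded by $1$. Since $\|\nabla f\|_{L^\infty}\le\|f\|_1\le k_0(\alpha)<1$ by Lemma~\ref{f1 estimate lemma}, every polynomial factor in $\|f\|_{C^1}$ is bounded by a constant depending only on $k_0(\alpha)$. The remaining integrals $\int_{|y|<1}|y|^{-2-\alpha+\delta}\,dy$ converge exactly because $\delta>\alpha$; this is where the lower endpoint of $(\alpha,1-\alpha)$ enters. The aim is
\[
\frac{d}{dt}\|f\|_{H^k}^2\le C\bigl(\alpha,k_0(\alpha)\bigr)\bigl(\|f\|_{C^{2,\delta}}+\|f\|_{C^{2,\delta}}^2\bigr)\|f\|_{H^k}^2,
\]
with the $L^2$ part handled similarly, using the bound on $\|f\|_{L^\infty}$ furnished by $\|f\|_1$ and interpolation.

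\textbf{Step 2: time integrability.} Use the elementary bound $\|f\|_{C^{2,\delta}}\le C(\|f\|_2+\|f\|_{2+\delta})$, which follows by putting $|e^{ix\cdot\xi}-e^{iy\cdot\xi}|\le 2^{1-\delta}|\xi|^\delta|x-y|^\delta$ into the Fourier representation. Interpolating, $\|f\|_{2}$ and $\|f\|_{2+\delta}$ lie between $\|f\|_{1+\delta'}$ and $\|f\|_{2+\delta'+\alpha}$ for a suitable $\delta'\in(0,1-\alpha)$. Lemma~\ref{f1+ estimate lemma} then gives $\int_0^T\|f\|_{2+\delta'+\alpha}\,ds\le\mu(\alpha)^{-1}\|f_0\|_{1+\delta'}$, which is independent of $T$. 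The quadratic term $\|f\|^2_{C^{2,\delta}}$ is handled by bounding one factor by $\|f\|_{2+\delta}(t)\le\|f_0\|_{2+\delta}$ via \eqref{bound on f in higher fourier norm 2} and integrating the other factor in time. Gr\"onwall then yields
\[
\|f\|_{H^k}^2(t)\le \|f_0\|_{H^k}^2\exp\Bigl(C\int_0^t\|f\|_{C^{2,\delta}}\,ds\Bigr)\le \bar C^2\|f_0\|_{H^k}^2,
\]
which is \eqref{bound on f in higher Hk norm}.

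\textbf{Main obstacle.} The hardest step is making sure that no factor worse than linear in a norm lacking uniform time-integrability survives in $\mathbf{I}_5$. Those commutator terms contain products of intermediate derivatives, for instance $\partial^{i}_{x_1}\nabla f\cdot\partial^{k-i}_{x_1}A_\alpha$. The estimate in \cite{CG07} gives a crude power $\|f\|^4_{C^{2,\delta}}$. I would instead distribute derivatives with Gagliardo--Nirenberg interpolation between $\|f\|_{C^{2,\delta}}$ and $\|f\|_{H^k}$, so that each term is bounded by $C\|f\|_{C^{2,\delta}}\|f\|_{H^k}^2$ times powers of $\|f\|_{C^1}\le k_0$, and the Gr\"onwall exponent remains integrable in time.
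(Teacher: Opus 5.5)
\textbf{Verdict: there is a genuine gap.} Your Step~2 does not give a Gr\"onwall exponent bounded independently of $T$, and that is the entire content of the lemma.

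First, the \emph{elementary bound} $\|f\|_{C^{2,\delta}}\le C(\|f\|_2+\|f\|_{2+\delta})$ is false for the norm of Section~\ref{local-in-time sec}, because that norm contains $\|f\|_{L^\infty}$ and $\|\nabla f\|_{L^\infty}$. For $g=f(\lambda\,\cdot)$ one has $\|g\|_s=\lambda^{s}\|f\|_s$ while $\|\nabla g\|_{L^\infty}=\lambda\|\nabla f\|_{L^\infty}$, so the inequality fails as $\lambda\to0$. Such lower-order factors really occur when you reuse the local bounds: the $|y|>1$ part of $\mathbf{J}_1$ gives $\tfrac{3+\alpha}{2}\|f\|_{C^1}\|\partial_{x_1}^kf\|_{L^2}^2$. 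Whether you absorb them into $C(\alpha,k_0(\alpha))$, as Step~1 suggests, or keep them inside $\|f\|_{C^{2,\delta}}$, the coefficient is merely bounded in time, and Gr\"onwall gives $e^{CT}$.

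Second, even the homogeneous piece $\|\nabla^2 f\|_{L^\infty}\le\|f\|_2$ is not in $L^1(0,\infty)$ by the available lemmas when $\alpha>0$. Lemma~\ref{f1+ estimate lemma} integrates in time only $\|f\|_{2+\alpha+\delta'}$. Interpolating, $\|f\|_2\le\|f\|_{1+\delta'}^{\theta}\|f\|_{2+\alpha+\delta'}^{1-\theta}$ with $\theta=\frac{\alpha+\delta'}{1+\alpha}>0$, and H\"older in time then yields only
\[
\int_0^T\|f\|_2\,dt\le T^{\theta}\,\|f_0\|_{1+\delta'}^{\theta}\bigl(\mu(\alpha)^{-1}\|f_0\|_{1+\delta'}\bigr)^{1-\theta},
\]
which grows with $T$. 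Interpolation against a merely bounded norm produces only $L^p_t$ control with $p>1$, not a $T$-uniform $L^1_t$ bound. The same defect affects your quadratic term: after freezing one factor of $\|f\|_{C^{2,\delta}}^2$ at $\|f_0\|_{2+\delta}$, the factor you integrate still contains $\|f\|_2$ and the $C^1$ part. Interpolating the product instead, e.g.\ $\|f\|_2^2\le\|f\|_{1+\delta'}\|f\|_{3-\delta'}$, would be harmless.

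\textbf{What the argument needs.} You need an inequality of the form $\frac{d}{dt}\|f\|_{H^k}^2\le p_1(\|\nabla f\|_{L^\infty})\,p_2(\|\nabla^2 f\|_{L^\infty})\,|\nabla^2 f|_{C^\delta}\,\|f\|_{H^k}^2$; this is what the paper's proof aims for. The sup norms enter only through bounded coefficients, and the only factor that must be integrated is linear in the top-order seminorm $|\nabla^2 f|_{C^\delta}\le C\|f\|_{2+\delta}$. Then \eqref{bound on f in higher fourier norm 1}, applied with $\delta-\alpha\in(0,1-\alpha)$ in place of $\delta$, gives $\int_0^T\|f\|_{2+\delta}\,dt\le\mu(\alpha)^{-1}\|f_0\|_{1+\delta-\alpha}$ uniformly in $T$. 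This is the second and essential use of $\delta>\alpha$, which your plan never makes; you use $\delta>\alpha$ only for the convergence of $\int_{|y|<1}|y|^{-2-\alpha+\delta}\,dy$.

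Be aware that the paper does not fully close this point either. Its coefficient also contains $|\nabla f|_{C^\delta}\le\|f\|_{1+\delta}$, which it discards via $\|f\|_{1+\delta}\le\|f\|_{2+\delta}$, an inequality that fails at low frequencies by the same scaling. So the large-$|y|$, low-frequency contributions are the real obstruction. They must be re-estimated so that they carry a time-integrable factor. Otherwise one only gets a bound growing with $T$, which suffices for Theorem~\ref{global existence thm} but not for the lemma as stated.

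A minor point: $\|f\|_{L^\infty}$ is not controlled by $\|f\|_1$; use Theorem~\ref{max principle thm} for it instead.
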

\begin{proof}
Recall that for $\delta\in(0,1)$, the homogeneous H\"{o}lder norm $\left|\,\cdot\,\right|_{C^{\delta}}$ is given by
 \begin{equation*}
\left|g\right|_{C^{\delta}} \triangleq \max_{x \neq y} \frac{|g(x) - g(y)|}{|x - y|^\delta}.
 \end{equation*}
To estimate the $H^k$-norm of $f$, we perform similar analysis as in the proof of Theorem~\ref{local existence thm} with some modifications. For example, in order to bound the terms
\begin{align*}
\mathbf{H}_{1} = \int_{\mathbb{R}^2} \partial_{x_1}^k f(x) \int_{\mathbb{R}^2} \frac{\nabla_{x} D_{y} f(x) \cdot y \, ( D_{y} f(x))^k (\partial_{x_1} D_{y} f(x))^k}{|y|^{2+\alpha} \left[ 1 + (D_{y} f(x))^2 \right]^{\frac{3+\alpha}{2}+k}} \, dy \, dx
\end{align*}
and
\begin{align*}
\mathbf{H}_{2} = \int_{\mathbb{R}^2} \partial_{x_1}^k f(x) \int_{\mathbb{R}^2} \frac{\nabla_{x} D_{y} f(x) \cdot y \, (  D_{y} f(x)) (\partial_{x_1} D_{y} f(x))^k}{|y|^{2+\alpha} \left[ 1 + (D_{y} f(x))^2 \right]^{\frac{3+\alpha}{2}+k-1}} \, dy \, dx,
\end{align*}
we have
\begin{align*}
\mathbf{H}_{1} + \mathbf{H}_{2} &\le 2 \int_{\mathbb{R}^2} |\partial_{x_1}^k f(x)| \int_{\mathbb{R}^2} \frac{|\nabla_x D_y f(x)|}{|y|^{1+\alpha}} \left|\partial_{x_1} D_y f(x)\right|^k dydx\\
&\leq C\left\|\partial_{x_1}^k f\right\|_{L^2}\left\|\nabla f\right\|_{L^2} \left\|\nabla f \right\|^{k-2}_{L^{\infty}}\left|\nabla f\right|_{C^{\delta}} \left\|\nabla^2 f\right\|_{L^{\infty}} \int_{|y|>1} \frac{|y|^{2\delta}}{|y|^{4+2\alpha}} \, dy \quad \\
&\quad + C\left\|\partial_{x_1}^k f\right\|_{L^2}\left\|\nabla^2 f\right\|_{L^4}^2\left|\nabla^2 f\right|_{C^{\delta}} \left\|\nabla^2 f\right\|^{k-2}_{L^{\infty}}\int_{|y|<1} \frac{|y|^{2\delta}}{|y|^{2+2\alpha}}dy.
\end{align*}
Since $0 < \alpha < \frac{1}{2}$ and $\delta\in(\alpha,1-\alpha)$, the two integrals appeared on the right side of the above inequality remain finite. And by the interpolation inequality that $\|\nabla^2 f\|_{L^4}^2\le \|\nabla f\|_{L^\infty}\|\nabla^3 f\|_{L^2}$, we further obtain
\begin{align*}
\mathbf{H}_{1} + \mathbf{H}_{2}\le &C\left\|\partial_{x_1}^k f\right\|_{L^2}\left\|\nabla f\right\|_{L^2} \left\|\nabla f \right\|^{k-2}_{L^{\infty}}\left|\nabla f\right|_{C^{\delta}} \left\|\nabla^2 f\right\|_{L^{\infty}}\\
&\qquad\qquad+C\left\|\partial_{x_1}^k f\right\|_{L^2}\|\nabla f\|_{L^\infty}\|\nabla^3 f\|_{L^2}\left|\nabla^2 f\right|_{C^{\delta}} \left\|\nabla^2 f\right\|^{k-2}_{L^{\infty}}.
\end{align*}
By performing similar estimates on $\|\partial_{x_2}^3 f\|_{L^2}$, there exists some polynomial functions $p_1(\cdot)$ and $p_2(\cdot)$ whose degrees depend on $k$ such that
\begin{align*}
\frac{d}{dt}\|f\|^2_{H^k}\le p_1(\|\nabla f\|_{L^\infty})p_2(\|\nabla^2 f\|_{L^\infty})(|\nabla f|_{C^\delta}+|\nabla^2 f|_{C^\delta})\|f\|_{H^k}^2.
\end{align*}
Using Fourier transform, for $i\in\N$, we have
\begin{align*}
\|\nabla^{i} f\|_{L^\infty}\le \|f\|_{i}
\end{align*}
and
\begin{align*}
|\nabla^{i} f|_{C^\delta}\le \|f\|_{i+\delta},
\end{align*}
using the bounds \eqref{bound on f in higher fourier norm 1}-\eqref{bound on f in higher fourier norm 2}, we obtain
\begin{align*}
\frac{d}{dt}\|f\|^2_{H^k}&\le p_1(\|f_0\|_{1+\delta})p_2(\|f_0\|_{2+\delta})(\|f\|_{1+\delta}+\|f\|_{2+\delta})\|f\|_{H^k}^2\notag\\
&\le 2p_1(\|f_0\|_{1+\delta})p_2(\|f_0\|_{2+\delta})\|f\|_{2+\delta}\|f\|_{H^k}^2,
\end{align*}
where the last inequality follows since $\|f\|_{1+\delta}\le\|f\|_{2+\delta}$. Applying Gr\"{o}nwall's inequality and using the bound \eqref{bound on f in higher fourier norm 1}, we have
\begin{align*}
\|f\|_{H^k}(t)\le\|f_0\|_{H^k}\exp\Big(\frac{Cp_1(\|f_0\|_{1+\delta})p_2(\|f_0\|_{2+\delta})\|f_0\|_{1+\alpha}}{\mu(\alpha)}\Big),
\end{align*}
and \eqref{bound on f in higher Hk norm} follows by choosing $\bar{C}=\exp\Big(\frac{Cp_1(\|f_0\|_{1+\delta})p_2(\|f_0\|_{2+\delta})\|f_0\|_{1+\alpha}}{\mu(\alpha)}\Big)$.
\end{proof}
\begin{proof}[Proof of Theorem~\ref{global existence thm}]
Given $k\ge4$ and $\alpha\in[0,\frac{1}{2})$, let $f_0\in H^k$ such that $\|f_0\|_{1}<k_0(\alpha)$ with \eqref{condition on k0} holds. By Theorem~\ref{local existence thm}, there exists a time $T>0$ such that the contour equation \eqref{contour eqn} possess a unique solution in $C^1([0,T];K^k(\R^2))$ with $f(x,0)=f_0$. Using the bound \eqref{bound on f in higher Hk norm}, the local-in-time solution can then be continued in $H^k$ for all time provided that $\|f_0\|_{1}$ is initially smaller than $k(\alpha)$ given by \eqref{condition on k0}.
\end{proof}


\section{Maximum principle and asymptotic behaviour of solutions}\label{max principle and long time section}

In this section, we prove some results regarding the maximum principle for the interface $f$ and further address the asymptotic behaviour for $f$. The results are given in the following subsections:

\subsection{Decay of $L^\infty$-norm of $f$}

In this subsection, we obtain the maximum principle for the $L^\infty$-norm for $f$ and show that $\|f\|_{L^\infty}$ decays in time $t$.

We begin with the following lemma about an ODE problem involving hypergeometric function.
\begin{lem}\label{ode lemma}
Let $\alpha\in[0,1)$. Consider the ODE problem that
\begin{align}
\label{ode problem} 
\left\{ \begin{array}{l}
g(0)=0, \\
{(1+z^2)g'(z) - (3+\alpha)z \cdot g(z) = (3+\alpha)z^2,\qquad z\in\R.}
\end{array}\right.
\end{align}
Then \eqref{ode problem} has a smooth solution $g:\R\to\R$ which is given by
\begin{align}\label{explicit form for ode solution}
g(z)=\frac{(3+\alpha)z^3(1+z^2)^\frac{3+\alpha}{2}}{3}{}_2F_1 \left( \frac{3}{2}, \frac{\alpha + 5}{2}; \frac{5}{2}; - z^{2} \right),
\end{align}
where ${}_2F_1(\,\cdot\,,\,\cdot\,;\,\cdot\,;\,\cdot\,)$ is the ordinary hypergeometric function.
\end{lem}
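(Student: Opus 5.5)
This is a first-order linear ODE, so the plan is to solve it by an integrating factor and then match the resulting integral with the hypergeometric expression. Dividing by $1+z^2>0$, the equation reads $g'-\frac{(3+\alpha)z}{1+z^2}g=\frac{(3+\alpha)z^2}{1+z^2}$. The integrating factor is $(1+z^2)^{-\frac{3+\alpha}{2}}$, since $\frac{d}{dz}(1+z^2)^{-\frac{3+\alpha}{2}}=-(3+\alpha)z(1+z^2)^{-\frac{5+\alpha}{2}}$. Multiplying through gives
\begin{align*}
\frac{d}{dz}\Big[(1+z^2)^{-\frac{3+\alpha}{2}}g(z)\Big]=(3+\alpha)z^2(1+z^2)^{-\frac{5+\alpha}{2}}.
\end{align*}
Integrating from $0$ and using $g(0)=0$, we obtain the unique solution
\begin{align*}
g(z)=(3+\alpha)(1+z^2)^{\frac{3+\alpha}{2}}\int_0^z s^2(1+s^2)^{-\frac{5+\alpha}{2}}\,ds .
\end{align*}
The integrand is smooth and bounded by $s^2$, so $g$ is smooth on $\R$. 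Uniqueness is immediate from linearity: the homogeneous solutions are $c(1+z^2)^{\frac{3+\alpha}{2}}$, and $g(0)=0$ forces $c=0$.

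It then remains to identify the integral. Substituting $s=zt$ gives
\begin{align*}
\int_0^z s^2(1+s^2)^{-\frac{5+\alpha}{2}}\,ds = z^3\int_0^1 t^2(1+z^2t^2)^{-\frac{5+\alpha}{2}}\,dt,
\end{align*}
and then $u=t^2$ turns this into $\frac{z^3}{2}\int_0^1 u^{1/2}(1-(-z^2)u)^{-\frac{5+\alpha}{2}}\,du$. Euler's integral representation
\begin{align*}
{}_2F_1(a,b;c;x)=\frac{\Gamma(c)}{\Gamma(a)\Gamma(c-a)}\int_0^1 u^{a-1}(1-u)^{c-a-1}(1-xu)^{-b}\,du
\end{align*}
applies with $a=\frac32$, $c=\frac52$ (so $c-a-1=0$), $b=\frac{5+\alpha}{2}$ and $x=-z^2$. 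Since $\Gamma(5/2)/(\Gamma(3/2)\Gamma(1))=\frac32$, the integral equals $\frac{z^3}{2}\cdot\frac23\,{}_2F_1(\frac32,\frac{5+\alpha}{2};\frac52;-z^2)=\frac{z^3}{3}\,{}_2F_1(\cdots)$. Substituting back gives exactly \eqref{explicit form for ode solution}.

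The main point needing care is that Euler's representation is valid for all real $z$, not only for $|z|<1$ where the hypergeometric series converges. Since $\Re c>\Re a>0$ and $x=-z^2\le 0$ lies outside $[1,\infty)$, the integral representation gives the analytic continuation of ${}_2F_1$ there. This is the standard definition of ${}_2F_1$ on $\mathbb{C}\setminus[1,\infty)$, so the formula holds on all of $\R$.

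As an optional check, expanding in $|z|<1$ and differentiating termwise confirms the ODE directly. This also verifies the normalisation: to leading order $g(z)\approx\frac{(3+\alpha)z^3}{3}$, which matches $g'(0)=0$ and $g'\approx(3+\alpha)z^2$.
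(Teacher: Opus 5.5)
Your proof is correct. The integrating factor $(1+z^2)^{-\frac{3+\alpha}{2}}$, the substitutions $s=zt$ and $u=t^2$ (also valid for $z<0$), and Euler's representation with $a=\frac32$, $c=\frac52$, $b=\frac{5+\alpha}{2}$ give the prefactor $\frac{z^3}{3}$ and reproduce \eqref{explicit form for ode solution} exactly.

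Your route differs from the paper's, which only asserts that the formula ``can be verified by direct computation,'' i.e.\ by substituting the closed form into the ODE. Done via the series, that check reduces to
$\frac{d}{dz}\bigl[z^3\,{}_2F_1(\frac32,\frac{5+\alpha}{2};\frac52;-z^2)\bigr]=3z^2(1+z^2)^{-\frac{5+\alpha}{2}}$.
This follows termwise from $(3/2)_n/(5/2)_n=3/(2n+3)$, but only for $|z|<1$, so extending it to all of $\R$ still needs an analytic-continuation remark. Your constructive derivation avoids this, because Euler's integral already represents the principal branch at $-z^2\le 0$.

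It also gives uniqueness and, more usefully, the representation $g(z)(1+z^2)^{-\frac{3+\alpha}{2}}=(3+\alpha)\int_0^z s^2(1+s^2)^{-\frac{5+\alpha}{2}}\,ds$. This integral identity is exactly what the proof of Theorem~\ref{max principle thm} relies on. From it one reads off at once that $H'(z)=(3+\alpha)z^2(1+z^2)^{-\frac{5+\alpha}{2}}$, that $H(z)\ge 0$ for $z\ge 0$, and that $H(z)=\mathcal{O}(z^3)$ near $0$. The hypergeometric closed form is therefore not needed for the later argument at all.
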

\begin{proof}
It can be verified by direct computation and we omit the details here.
\end{proof}

The following theorem gives the maximum principle of $\|f\|_{L^\infty}$.

\begin{thm}\label{max principle thm}
Let $f_0\in H^k(\R^2)$ for $k\ge4$. Then for any $\alpha\in[0,1)$, if $f$ is the unique solution to \eqref{contour eqn} on $[0,T]$, the following bound holds for all $t\in[0,T]$:
\begin{align}\label{max principle for f}
\|f\|_{L^\infty}(t)\le\|f_0\|_{L^\infty}.
\end{align}
\end{thm}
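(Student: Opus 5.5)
The plan is the classical pointwise maximum-principle argument of C\'ordoba--Gancedo, adapted to the kernel with exponent $\frac{3+\alpha}{2}$. By Theorem~\ref{local existence thm}, $f\in C^1([0,T];H^k(\R^2))$ with $k\ge4$, so $f(\cdot,t)$ is $C^2$ and decays at infinity. Hence, whenever $f(\cdot,t)\not\equiv0$, at least one of $\max_x f(x,t)>0$ or $\min_x f(x,t)<0$ is attained at some point. Set $M(t)=\max_x f(x,t)$. Because $f\in C^1$ in time with values in a space embedded in $C^1$, $M$ is Lipschitz. By the standard Rademacher/envelope argument (as in \cite{cordoba2009maximum}), for almost every $t$ we have $M'(t)=\partial_t f(x_t,t)$, where $x_t$ is any point with $f(x_t,t)=M(t)$. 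The minimum is treated symmetrically. Together these give that $\|f\|_{L^\infty}(t)$ is nonincreasing, which is \eqref{max principle for f}.

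The core step is to evaluate the right-hand side of \eqref{contour eqn} at $x=x_t$. There $\nabla f(x_t)=0$ and $f(x_t)-f(x_t-y)\ge0$ for all $y$, so
\begin{align*}
\partial_t f(x_t,t)=-\PV\int_{\R^2}\frac{\nabla f(x_t-y)\cdot y}{\left[|y|^2+(f(x_t)-f(x_t-y))^2\right]^{\frac{3+\alpha}{2}}}\,dy.
\end{align*}
Write $\nabla f(x_t-y)\cdot y=-r\partial_r[f(x_t)-f(x_t-y)]$ in polar coordinates $y=r\omega$, and set $h(r)=f(x_t)-f(x_t-r\omega)\ge0$. The integrand becomes $r^{-1-\alpha}\,\partial_r h\,/(1+(h/r)^2)^{\frac{3+\alpha}{2}}$, integrated against $dr\,d\theta$. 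The natural move is to write this as an exact $r$-derivative plus a sign-definite remainder, using $z=h/r$. This is where Lemma~\ref{ode lemma} enters. With $g$ solving $(1+z^2)g'-(3+\alpha)zg=(3+\alpha)z^2$, one checks that $\partial_r\bigl[r^{-\alpha}G(h/r)\bigr]$, for $G(z)=g(z)(1+z^2)^{-\frac{3+\alpha}{2}}$ or a closely related antiderivative such as $\int_0^z(1+s^2)^{-\frac{3+\alpha}{2}}ds$, reproduces the integrand up to a term of the form $c_\alpha r^{-1-\alpha}\Phi(h/r)$ with $\Phi\ge0$ and $c_\alpha\ge0$. I would first try the simplest choice $G(z)=\int_0^z(1+s^2)^{-\frac{3+\alpha}{2}}ds$. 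It gives
\begin{align*}
\frac{r^{-1-\alpha}\partial_r h}{(1+z^2)^{\frac{3+\alpha}{2}}}=\partial_r\bigl(r^{-\alpha}G(z)\bigr)+r^{-1-\alpha}\left[\alpha G(z)+\frac{z}{(1+z^2)^{\frac{3+\alpha}{2}}}\right].
\end{align*}
The bracket is $\ge0$ for $z\ge0$, and $G(z)\ge0$ there as well. The hypergeometric $g$ of Lemma~\ref{ode lemma} is the tool for writing the remainder in closed form, in case the sign needs to be made explicit after an integration by parts in $r$ at the principal-value level.

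Boundary terms come next. As $r\to\infty$, $r^{-\alpha}G(h/r)\to0$ because $h$ is bounded and $\alpha\ge0$; for $\alpha=0$ this still tends to $0$ since $G(z)\sim z$. As $r\to0$, $h(r)=O(r^2)$ because $\nabla f(x_t)=0$, so $r^{-\alpha}G(h/r)=O(r^{1-\alpha})\to0$ for $\alpha<1$. This is exactly where the restriction $\alpha\in[0,1)$ is used, and it also justifies dropping the principal value. Therefore $\partial_t f(x_t,t)=-\int\!\!\int r^{-1-\alpha}[\cdots]\,dr\,d\theta\le0$, hence $M'(t)\le0$ a.e. At a minimum point the same computation with $h\le0$ gives $\partial_t f\ge0$, since $G$ and $z(1+z^2)^{-\frac{3+\alpha}{2}}$ are odd.

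The main obstacle is making this integration by parts rigorous: the remainder integrand must be integrable near $r=0$ and $r=\infty$, and the exchange with the principal value must be justified. Near $r=0$, $h/r=O(r)$ and the bracket is $O(r)$, so $r^{-1-\alpha}\cdot r$ is integrable precisely for $\alpha<1$. At infinity the bracket is $O(r^{-1})$, which is integrable. I expect these checks to go through, but they are the delicate point. The remaining Lipschitz/a.e.-differentiability bookkeeping for $M(t)$ is routine.
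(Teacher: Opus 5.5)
Your argument is correct, and it is a genuinely different and simpler route than the paper's.

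\textbf{How the paper argues.} The paper integrates by parts in $\R^2$ and splits $\Psi'(t)$ into two pieces.
\begin{itemize}
\item In $\mathbf{D}_1$ the derivative falls on the kernel $\frac{x_*-y}{|x_*-y|^{3+\alpha}}$. This directly gives the nonpositive term $-(1+\alpha)\int\frac{f(x_*)-f(y)}{[\,|x_*-y|^2+(f(x_*)-f(y))^2]^{(3+\alpha)/2}}\,dy$.
\item In $\mathbf{D}_2$ the derivative falls on $(1+z^2)^{-\frac{3+\alpha}{2}}$. The paper rewrites this with the hypergeometric antiderivative $H=g\,(1+z^2)^{-\frac{3+\alpha}{2}}$ of Lemma~\ref{ode lemma} as $\frac1\alpha\int\nabla_y|x_*-y|^{-\alpha}\cdot\nabla_yH(z)\,dy$. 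A second integration by parts against $\Delta_y|x_*-y|^{-\alpha}=\alpha^2|x_*-y|^{-2-\alpha}\ge0$, together with $H\ge0$, gives the sign.
\end{itemize}

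\textbf{How yours compares.} You integrate by parts once along rays, using the elementary antiderivative $G(z)=\int_0^z(1+s^2)^{-\frac{3+\alpha}{2}}ds$. Since $G-H=z(1+z^2)^{-\frac{3+\alpha}{2}}$, your remainder $r^{-1-\alpha}[\alpha G+z(1+z^2)^{-\frac{3+\alpha}{2}}]$ equals $r^{-1-\alpha}[\alpha H+(1+\alpha)z(1+z^2)^{-\frac{3+\alpha}{2}}]$. Its $dr\,d\theta$-integral is exactly $-(\mathbf{D}_1+\mathbf{D}_2)$ in polar coordinates, so both proofs reach the same identity. Your route has several advantages:
\begin{itemize}
\item It needs no hypergeometric function. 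Lemma~\ref{ode lemma} plays no role, so you can drop that hedge.
\item It never divides by $\alpha$, so it covers $\alpha=0$ in the same stroke instead of citing \cite{cordoba2009maximum}.
\item It also yields directly the lower bound $-\partial_tf(x_t,t)\ge\int\frac{f(x_t)-f(y)}{[\,|x_t-y|^2+(f(x_t)-f(y))^2]^{(3+\alpha)/2}}\,dy$, which the paper uses later for the decay estimate.
\end{itemize}
The paper's version mainly has the merit of showing the sign as a consequence of the subharmonicity of $|w|^{-\alpha}$.

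\textbf{A sign slip to correct.} With $h(r)=f(x_t)-f(x_t-r\omega)$ you have $\partial_rh=\nabla f(x_t-r\omega)\cdot\omega$. Hence $\nabla f(x_t-y)\cdot y=+r\,\partial_rh$, not $-r\,\partial_rh$. With the correct sign,
\[
\partial_tf(x_t,t)=-\int_0^{2\pi}\!\int_0^\infty r^{-1-\alpha}\partial_rh\,(1+z^2)^{-\frac{3+\alpha}{2}}\,dr\,d\theta,
\]
which is what your final line actually uses. Taken literally, the sign you wrote would give $\partial_tf(x_t,t)\ge0$.

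\textbf{Closing the rigour step you flag.}
\begin{itemize}
\item Since $\nabla f(x_t)=0$, the integral is absolutely convergent, so no principal value is needed. Near $0$ the integrand is $O(|y|^{-1-\alpha})$. For $|y|>1$, apply Cauchy--Schwarz with $\nabla f\in L^2$ and $|y|^{-2-\alpha}\in L^2(\{|y|>1\})$.
\item Integrate by parts on $[\epsilon,R]$ along each ray. The boundary terms are $O(\epsilon^{1-\alpha})$ and $O(R^{-1-\alpha})$, uniformly in $\theta$.
\item Pass to the limit in the sign-definite remainder by monotone convergence.
\end{itemize}
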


\begin{proof}
The case for $\alpha=0$ was proved in \cite{cordoba2009maximum}, and hence we can assume $\alpha\in(0,1)$. Using Theorem~\ref{local existence thm} and Sobolev inequality, there exists $T>0$ and a unique solution $f(x,t)$ defined on $\R^2\times[0,T]$ such that $f\in C^1([0,T]\times\R^2)$. 

Since $f(\cdot,t)\in H^k$ with $k\ge4$, by the Riemann-Lebesgue lemma, we have $f(x,t)\to0$ as $|x|\to\infty$, and hence there exists a point $x_*\in\R^2$ such that $f(x,t)$ reaches its maximum at $x=x_*$.

We define $\Psi(t)=f(x_*,t)$ and assume that $\Psi(t)>0$. By the Rademacher theorem, the function $\Psi(t)$ is differentiable almost everywhere on $t$ with $\frac{\partial\Psi}{\partial t}(t)$ given by
\begin{align*}
\frac{\partial\Psi}{\partial t}(t)=\frac{\partial f}{\partial t}(x_*,t).
\end{align*}
Since $\nabla f(x_*,t) = 0$, we further obtain
\begin{equation*}
\frac{\partial\Psi}{\partial t}(t) = \PV \int_{\mathbb{R}^2} \frac{- \nabla f(y,t) \cdot (x_* - y)}{\left[ |x_* - y|^2 + (f(x_*,t) - f(y,t))^2 \right]^{\frac{3+ \alpha}{2}}} \, dy.
\end{equation*}
Upon integrating by parts,
\begin{align*}
\frac{\partial\Psi}{\partial t}(t) &= - \PV \int_{\mathbb{R}^2} \left( f{(x_*,t)} - f{(y,t)} \right) \nabla_x\cdot\left(\frac{x_*- y}{|x_* - y|^{3+\alpha}} \right) \left[ 1 + \left( \frac{f{(x_*,t)} - f{(y,t)} }{|x_* - y|}\right)^2 \right]^{-\frac{3 + \alpha}{2}}\\
&\quad - \PV \int_{\mathbb{R}^2} \frac{\left( f{(x_*,t)} - f{(y,t)} \right)}{|x_* - y|}\frac{x_* - y}{|x_* - y|^{2+\alpha}} \cdot\nabla_y \left[ 1 + \left( \frac{f{(x_*,t)} - f{(y,t)} }{|x_* - y|}\right)^2 \right]^{-\frac{3 + \alpha}{2}}\\
&\triangleq \mathbf{D}_1+\mathbf{D}_2.
\end{align*}
We estimate $\mathbf{D}_1$ and $\mathbf{D}_2$ as follows. First of all, for the term $\mathbf{D}_1$, since $\Psi(t)\ge f(y,t)$ for all $y\in\R^2$, we readily have
\begin{align*}
\mathbf{D}_{1} = - \PV \int_{\mathbb{R}^2} \frac{\Psi(t) - f(y,t)}{\left[ \left|x_* - y\right|^{2} + \left(\Psi(t) - f(y,t)\right)^{2}\right]^{\frac{3 + \alpha}{2}}}dy\le 0.
\end{align*}
{For the term $\mathbf{D}_{2}$, using Lemma~\ref{ode lemma}, if we define $g(z)$ by \eqref{explicit form for ode solution}, then by choosing 
\begin{align*}
H(z)=\frac{g(z)}{(1+z^2)^\frac{3+\alpha}{2}},
\end{align*}
we have
\begin{align*}
H'(z)&=g'(z)(1+z^2)^{-\frac{3+\alpha}{2}}-\frac{3+\alpha}{2}g(z)(2z)(1+z^2)^{-\frac{5+\alpha}{2}}\\
&=(3+\alpha)z^2(1+z^2)^{-\frac{5+\alpha}{2}},
\end{align*}
and hence $\mathbf{D}_{2}$ can be expressed by
\begin{align*}
\mathbf{D}_{2}= \PV \frac{1}{\alpha} \int_{\mathbb{R}^2} \nabla_y \left( |x_* - y|^{-\alpha} \right) \cdot \nabla_y H \left( \frac{f{(x_*,t)} - f{(y,t)}}{|x_* - y|} \right) dy.
\end{align*}
For each $0<r\le 1$, let $\mathcal{B}_r(x_{*})$ be the ball centred at $x_{*}$ with radius $r$, then we can further decompose $\mathbf{D}_{2}$ as follows.
\begin{align}\label{D2 def revised}
\mathbf{D}_{2}&=\frac{1}{\alpha} \int_{\mathcal{B}_r(x_{*})} \nabla_y \left( |x_* - y|^{-\alpha} \right) \cdot \nabla_y H \left( \frac{f{(x_*,t)} - f{(y,t)}}{|x_* - y|} \right) dy\notag\\
&\qquad+\frac{1}{\alpha} \int_{\mathbb{R}^2\setminus\mathcal{B}_r(x_{*})} \nabla_y \left( |x_* - y|^{-\alpha} \right) \cdot \nabla_y H \left( \frac{f{(x_*,t)} - f{(y,t)}}{|x_* - y|} \right) dy.
\end{align}
For the first integral, there exists $C(x_*,f,\alpha)>0$ independent of $r$ such that
\begin{align*}
\left|\frac{1}{\alpha} \int_{\mathcal{B}_r(x_{*})} \nabla_y \left( |x_* - y|^{-\alpha} \right) \cdot \nabla_y H \left( \frac{f{(x_*,t)} - f{(y,t)}}{|x_* - y|} \right) dy\right|\le C(x_*,f,\alpha)r^{1-\alpha},
\end{align*} 
while for the second integral, upon integrating by parts implies
\begin{align*}
&\frac{1}{\alpha} \int_{\mathbb{R}^2\setminus\mathcal{B}_r(x_{*})} \nabla_y \left( |x_* - y|^{-\alpha} \right) \cdot \nabla_y H \left( \frac{f{(x_*,t)} - f{(y,t)}}{|x_* - y|} \right) dy\\
&=-\frac{1}{\alpha} \int_{\mathbb{R}^2\setminus\mathcal{B}_r(x_{*})} \Delta_y \left( |x_* - y|^{-\alpha} \right) \cdot H \left( \frac{f{(x_*,t)} - f{(y,t)}}{|x_* - y|} \right) dy\\
&\qquad+\frac{1}{\alpha} \int_{\partial\mathcal{B}_r(x_{*})} \nabla_y \left( |x_* - y|^{-\alpha} \right) \cdot H \left( \frac{f{(x_*,t)} - f{(y,t)}}{|x_* - y|} \right) dS_y.
\end{align*}
Since $x_*$ is a spatial maximum of $f$, it implies $\nabla f(x_*) = 0$ and hence
\begin{align*}
\frac{f{(x_*,t)} - f{(y,t)}}{|x_* - y|} =\mathcal{O}(|x_* - y|).
\end{align*}
By the definition of $H(z)$, there exists $\tilde{C}(x_*,f,\alpha)>0$ independent of $r$ such that
\begin{align*}
\left|\frac{1}{\alpha} \int_{\partial\mathcal{B}_r(x_{*})} \nabla_y \left( |x_* - y|^{-\alpha} \right) \cdot H \left( \frac{f{(x_*,t)} - f{(y,t)}}{|x_* - y|} \right) dS_y\right|\le \tilde{C}(x_*,f,\alpha)r^{3-\alpha}.
\end{align*}
Therefore, by taking $r\to0$ in \eqref{D2 def revised}, we obtain
\begin{align*}
\mathbf{D}_{2}=-\frac{1}{\alpha} \int_{\mathbb{R}^2} \Delta_y \left( |x_* - y|^{-\alpha} \right) \cdot H \left( \frac{f{(x_*,t)} - f{(y,t)}}{|x_* - y|} \right) dy.
\end{align*}
We recall that $x_*$ is a global maximum of $f$, which implies $f(x_*) - f(y) \ge 0$ and consequently 
$H \left( \frac{f{(x_*,t)} - f{(y,t)}}{|x_* - y|} \right) \ge 0$ for all $y\in\R^2$. Together with the fact that $\Delta_y \left( |x_* - y|^{-\alpha} \right)\ge0$, it yields $\mathbf{D}_{2}\le0$.

Since both $\mathbf{D}_1 \le 0$ and $\mathbf{D}_{2}\le0$, we finally obtain $\frac{\partial \Psi}{\partial t}(t)\le 0$ for almost every $t$. Integrating over $t$, it implies $f(x_*,t)\le\|f_0\|_{L^\infty}$ for all $t>0$.}

For the case when $\Psi(t)\le 0$, following the similar argument as shown above, we can conclude that $\frac{\partial \Psi}{\partial t}(t)\ge 0$ for almost every $t$. Hence upon integrating over $t$, it implies $-f(x_*,t)\le\|f_0\|_{L^\infty}$ for all $t>0$. Combining the above cases, we prove that \eqref{max principle for f} holds.
\end{proof}

As a consequence of Theorem~\ref{max principle thm}, the following theorem gives the decay of $L^\infty$-norm of $f$.

\begin{thm}\label{decay of f thm}
Let $f_0\in H^k(\R^2)$ with $k\ge4$ and $\alpha\in[0,1)$. If $f_0$ satisfies
\begin{align}\label{positivity assumption on f0}
\text{$f_0(x)\le0$ or $f_0(x)\ge0$ for all $x\in\R^2$,}
\end{align}
and if $f$ is the unique solution to \eqref{contour eqn} on $[0,T]$, then for all $t\in[0,T]$, it satisfies
\begin{align}\label{decay on f in L infty norm}
\|f\|_{L^\infty}(t)\le\frac{\|f_0\|_{L^\infty}}{[1+C(\|f_0\|_{L^\infty},\|f_0\|_{L^1},\alpha)t]^\frac{2}{1+\alpha}},
\end{align}
where $C(\|f_0\|_{L^\infty},\|f_0\|_{L^1},\alpha)$ is a positive constant which depends on $\|f_0\|_{L^\infty}$, $\|f_0\|_{L^1}$ and $\alpha$.
\end{thm}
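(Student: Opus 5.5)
We treat the case $f_0\ge0$; the case $f_0\le0$ follows by applying it to $-f$, since $-f$ solves \eqref{contour eqn} with datum $-f_0$ (the kernel is even in $f$ and the numerator is linear in $f$). The plan is to sharpen the pointwise argument of Theorem~\ref{max principle thm} into a quantitative differential inequality for $\Psi(t)=\max_x f(x,t)=f(x_*,t)$, namely
\begin{align*}
\Psi'(t)\le -c\,\Psi(t)^{\frac{3+\alpha}{2}},
\end{align*}
with $c=c(\|f_0\|_{L^\infty},\|f_0\|_{L^1},\alpha)>0$. Integrating $\frac{d}{dt}\Psi^{-\frac{1+\alpha}{2}}\ge \frac{1+\alpha}{2}c$ then gives $\Psi(t)\le \Psi(0)\,[1+Ct]^{-\frac{2}{1+\alpha}}$ with $C=\frac{1+\alpha}{2}c\,\Psi(0)^{\frac{1+\alpha}{2}}$, which is \eqref{decay on f in L infty norm} because $\|f\|_{L^\infty}=\Psi$ when $f\ge0$.

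First I need two preliminary facts. (i) Positivity is preserved: at a spatial minimum $x_\dagger$ the argument of Theorem~\ref{max principle thm}, with all signs reversed, shows that the minimum is nondecreasing, so $f(\cdot,t)\ge0$. (ii) Mass is conserved: the right side of \eqref{contour eqn} is a divergence in $x$. Indeed, integrating the equation over $x$ and using the $x\leftrightarrow x-y$ antisymmetry that was already used for $\mathbf{L}_2$ gives $\frac{d}{dt}\int f\,dx=0$. Hence $\|f\|_{L^1}(t)=\|f_0\|_{L^1}$, and also $\|f\|_{L^\infty}(t)\le\|f_0\|_{L^\infty}$ by Theorem~\ref{max principle thm}.

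Next I use the decomposition $\Psi'=\mathbf{D}_1+\mathbf{D}_2$ from the proof of Theorem~\ref{max principle thm}. There $\mathbf{D}_2\le0$, so I drop it and only need to bound $\mathbf{D}_1$ from above by a negative quantity. Restrict the integral in $\mathbf{D}_1$ to the set $A=\{y:\ f(y,t)\le\Psi/2\}$, where $\Psi-f(y)\ge\Psi/2$. The integrand is nonnegative everywhere, so this only weakens the bound. On $A$ I also have $\Psi-f(y)\le\Psi\le\|f_0\|_{L^\infty}$. Hence
\begin{align*}
\mathbf{D}_1\le-\frac{\Psi}{2}\int_{A}\frac{dy}{\left[|x_*-y|^2+\Psi^2\right]^{\frac{3+\alpha}{2}}}.
\end{align*}
By Chebyshev, the complement $A^c$ has measure at most $2\|f_0\|_{L^1}/\Psi$. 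Therefore $A$ contains the annulus $\{R\le|x_*-y|\le 2R\}$ minus a set of measure at most $2\|f_0\|_{L^1}/\Psi$. I choose $R^2\sim\|f_0\|_{L^1}/\Psi$ so that at least half of the annulus (area $\sim R^2$) lies in $A$. This gives
\begin{align*}
\mathbf{D}_1\le -c\,\Psi\,\frac{R^2}{(R^2+\Psi^2)^{\frac{3+\alpha}{2}}}.
\end{align*}

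The main obstacle is extracting the exact exponent $\frac{3+\alpha}{2}$ from this bound. With $R^2=K\|f_0\|_{L^1}/\Psi$ and $\Psi\le\|f_0\|_{L^\infty}$, we have $\Psi^2\le \|f_0\|_{L^\infty}^3\Psi^{-1}$, so $R^2+\Psi^2\le C_1\Psi^{-1}$ with $C_1$ depending on $\|f_0\|_{L^1}$ and $\|f_0\|_{L^\infty}$. Substituting gives $\mathbf{D}_1\le -c\,\Psi\cdot\Psi^{-1}\cdot\Psi^{\frac{3+\alpha}{2}}=-c\,\Psi^{\frac{3+\alpha}{2}}$, which is the claimed ODE inequality. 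The delicate points are, first, the a.e.\ differentiability of $\Psi$, where I reuse the Rademacher argument of Theorem~\ref{max principle thm}. Second, the constants must be tracked so that $c$ depends only on $\|f_0\|_{L^\infty}$, $\|f_0\|_{L^1}$ and $\alpha$; this is exactly what (i) and (ii) guarantee.
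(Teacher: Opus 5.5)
Your proposal is correct and follows essentially the paper's route. You discard the nonpositive $\mathbf{D}_2$ and bound the remaining integral below on the set where $\Psi-f\ge\Psi/2$, using Chebyshev and conservation of mass. You then take the radius of order $\Psi^{-1/2}$ and use $\Psi^2\le\|f_0\|_{L^\infty}^3\Psi^{-1}$ to reach $\Psi'\le -c\,\Psi^{\frac{3+\alpha}{2}}$; the paper uses the ball $\mathcal{B}_r(x_*)$ with $r^2=(2\|f_0\|_{L^1}/\pi+1)/\Psi$ instead of your annulus, which makes no difference.

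One small correction. Written in the variables $(x,z)$ with $z=x-y$, the integrand of the double integral is symmetric, not antisymmetric, under $x\leftrightarrow z$, so the argument used for $\mathbf{L}_2$ does not apply. Mass conservation should instead rest on your divergence remark: for each fixed $y$ the integrand equals $\frac{y}{|y|^{2+\alpha}}\cdot\nabla_x G_\alpha(D_yf(x))$ with $G_\alpha(s)=\int_0^s(1+\sigma^2)^{-\frac{3+\alpha}{2}}\,d\sigma$, and this integrates to zero in $x$.
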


\begin{proof}
The inequality \eqref{decay on f in L infty norm} holds trivially for $\|f\|_{L^\infty}(t)=0$, and hence without loss of generality we assume $\|f\|_{L^\infty}(t)>0$. 

Consider the case for $f_0(x)\ge0$, and the case for $f_0(x)\le0$ can be proved by a similar argument. By Theorem~\ref{max principle thm},
\begin{equation*}
\frac{\partial\Psi}{\partial t}(t) = \PV \int_{\mathbb{R}^2} \frac{- \nabla f(y,t) \cdot (x_* - y)}{\left[ |x_* - y|^2 + (f(x_*,t) - f(y,t))^2 \right]^{\frac{3+ \alpha}{2}}} \, dy\ge0,
\end{equation*}
for almost every $t$. Hence if $f_0(x)\ge0$, then it implies $f(x,t)\ge0$. Also, by symmetry, 
\begin{align*}
\int_{\R^2}\frac{\partial f}{\partial t}(x,t)dx = \int_{\R^2} \PV \, \int_{\mathbb{R}^2} \frac{ \left( \nabla f(x,t) - \nabla f(x - y, t) \right) \cdot y }{ \left[ |y|^2 + (f(x,t) - f(x - y, t))^2 \right]^{\frac{3 + \alpha}{2}}} \, dy = 0,
\end{align*}
hence we have $\|f\|_{L^1}(t)=\|f_0\|_{L^1}$. By the fact that $\|f\|_{L^\infty}(t)=f(x_*,t)$, we compute
\begin{align*}
\frac{d}{dt}\|f\|_{L^\infty}(t)=-\mathbf{D}_3,
\end{align*}
where $\mathbf{D}_3$ is given by
\begin{align*}
\mathbf{D}_3=\PV \int_{\mathbb{R}^2} \frac{f(x_*,t) - f(y,t)}{\left[ |x_* - y|^2 + (f(x_*,t) - f(y,t))^2 \right]^{\frac{3+ \alpha}{2}}} \, dy.
\end{align*}
For each $r>0$, we define the ball $\mathcal{B}_r(x_*)=\{y:|x_*-y|\le r\}$ and the sets $V_1$, $V_2$ by
\begin{align*}
V_1 &= \left\{y\in \mathcal{B}_r(x_*): f(x_*,t) - f(y,t) \ge \frac{f(x_*,t)}{2}\right\},\\
V_2 &= \left\{y\in \mathcal{B}_r(x_*): f(x_*,t) - f(y,t) < \frac{f(x_*,t)}{2}\right\}.
\end{align*}
Since the integrand of $\mathbf{D}_3$ is non-negative, we can estimate $\mathbf{D}_3$ as follows.
\begin{align*}
\mathbf{D}_3 &\ge \int_{V_1} \frac{f(x_*,t) - f(y,t)}{\left[ |x_* - y|^2 + (f(x_*,t) - f(y,t))^2 \right]^{\frac{3+ \alpha}{2}}} \, dy\\
&\ge \frac{f(x_*,t)/2}{[r^2+4\|f_0\|^2_{L^\infty}]^\frac{3+\alpha}{2}}|V_1|=\frac{\|f\|_{L^\infty}(t)/2}{[r^2+4\|f_0\|^2_{L^\infty}]^\frac{3+\alpha}{2}}|V_1|.
\end{align*}
To estimate $|V_1|$, we notice that $|V_1|=\pi r^2 - |V_2|$ and since $f(y,t)\ge0$,
\begin{align*}
\|f_0\|_{L^1}&=\int_{\R^2} f(y,t)dy\\
&\ge\int_{V_2}f(y,t)dy\\
&\ge\frac{f(x_*,t)}{2}|V_2|=\frac{\|f\|_{L^\infty}(t)}{2}|V_2|.
\end{align*}
Hence by rearranging terms,
\begin{align*}
|V_1|\ge\pi r^2 - \frac{2\|f_0\|_{L^1}}{\|f\|_{L^\infty}(t)}.
\end{align*}
Therefore we have 
\begin{align*}
\mathbf{D}_3\ge \frac{\pi r^2\|f\|_{L^\infty}(t) - 2\|f_0\|_{L^1}}{2[r^2+4\|f_0\|^2_{L^\infty}]^\frac{3+\alpha}{2}}.
\end{align*}
We choose 
\begin{align*}
r=\left(\frac{2\|f_0\|_{L^1}/\pi+1}{\|f\|_{L^\infty}(t)}\right)^\frac{1}{2},
\end{align*}
then we obtain
\begin{align*}
\mathbf{D}_3\ge\frac{\pi}{2}\frac{\|f\|_{L^\infty}(t)^\frac{3+\alpha}{2}}{[1+2\|f_0\|_{L^1}/\pi+4\|f_0\|^3_{L^\infty}]^\frac{3+\alpha}{2}}.
\end{align*}
Choosing $\tilde C(\|f_0\|_{L^\infty},\|f_0\|_{L^1},\alpha)=\frac{\pi}{2[1+2\|f_0\|_{L^1}/\pi+4\|f_0\|^3_{L^\infty}]^\frac{3+\alpha}{2}}$, we thus conclude that
\begin{align*}
\frac{d}{dt}\|f\|_{L^\infty}(t)\le - \tilde C(\|f_0\|_{L^\infty},\|f_0\|_{L^1},\alpha)\|f\|_{L^\infty}(t)^\frac{3+\alpha}{2},
\end{align*}
and upon integrating by parts, the bound \eqref{decay on f in L infty norm} follows by taking the constant $C(\|f_0\|_{L^\infty},\|f_0\|_{L^1},\alpha)=\frac{1+\alpha}{2}\|f_0\|_{L^\infty}^\frac{1+\alpha}{2}\tilde C(\|f_0\|_{L^\infty},\|f_0\|_{L^1},\alpha)$.
\end{proof}

\subsection{Decay of $L^\infty$-norm of $\nabla f$}

In this subsection, we further obtain the maximum principle for the $L^\infty$-norm for $\nabla f$ and show that $\|\nabla f\|_{L^\infty}$ decays in time $t$ under a boundedness assumption on $\|\nabla f\|_{L^1}$.

We begin with the following maximum principle for $\nabla f$ under the assumption that $\nabla f_0$ is smaller than some constant depending on $\alpha$.

\begin{thm}\label{max principle nabla f thm}
Let $f_0\in H^k(\R^2)$ for $k\ge4$. Then for any $\alpha\in[0,1)$ and $\varepsilon\in(0,1+\alpha)$, if $f$ is the unique solution to \eqref{contour eqn} on $[0,T]$ with 
\begin{align}\label{boundedness assumption on nabla f0}
\|\nabla f_0\|_{L^\infty}<\sqrt{\frac{1+\alpha-\varepsilon}{5+\alpha+\varepsilon}},
\end{align}
the following bound holds for all $t\in[0,T]$:
\begin{align}\label{max principle for nabla f}
\|\nabla f\|_{L^\infty}(t)\le\|\nabla f_0\|_{L^\infty}.
\end{align}
\end{thm}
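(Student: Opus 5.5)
We follow the pointwise-maximum strategy of Theorem~\ref{max principle thm}, applied now to the derivatives $\partial_{x_i} f$; this mirrors the argument of \cite{cordoba2009maximum} for $\alpha=0$. Fix a unit vector $e$ (it suffices to treat $e=e_1$, $e=e_2$, or any direction, since $|\nabla f|$ is controlled by directional derivatives) and set $h=\partial_e f$. Because $f(\cdot,t)\in H^k$ with $k\ge4$, $h$ is $C^1$ and decays at infinity. Hence, as in Theorem~\ref{max principle thm}, $h(\cdot,t)$ attains its maximum at some point $x_*$. We write $\Phi(t)=h(x_*,t)$ and, by Rademacher's theorem, compute $\Phi'(t)=\partial_t h(x_*,t)$ for a.e. $t$. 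We work with the equivalent form of \eqref{contour eqn},
\begin{align*}
\partial_t f(x)=\PV\int_{\R^2}\frac{(\nabla f(x)-\nabla f(y))\cdot(x-y)}{[|x-y|^2+(f(x)-f(y))^2]^{\frac{3+\alpha}{2}}}dy,
\end{align*}
and differentiate in $e$.

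\textbf{Step 1: expansion at the maximum.} Differentiating produces two kinds of terms.

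The first kind is linear in $h$:
\begin{align*}
\PV\int_{\R^2}\frac{(\nabla h(x)-\nabla h(y))\cdot(x-y)}{[|x-y|^2+(f(x)-f(y))^2]^{\frac{3+\alpha}{2}}}dy .
\end{align*}

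The second kind comes from differentiating the kernel:
\begin{align*}
-(3+\alpha)\PV\int_{\R^2}\frac{(\nabla f(x)-\nabla f(y))\cdot(x-y)\,(f(x)-f(y))(h(x)-h(y))}{[|x-y|^2+(f(x)-f(y))^2]^{\frac{5+\alpha}{2}}}dy .
\end{align*}

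At $x=x_*$ we use $\nabla h(x_*)=0$. As in the proof of Theorem~\ref{max principle thm}, we integrate the $\nabla h(y)$ term by parts in $y$. This yields a principal dissipative term of the form
\begin{align*}
-\PV\int_{\R^2}\frac{h(x_*)-h(y)}{[|x_*-y|^2+(f(x_*)-f(y))^2]^{\frac{3+\alpha}{2}}}\,\mathcal{K}\,dy,
\end{align*}
where, writing $z=D_yf$ and $w=\nabla f(y)\cdot\frac{x_*-y}{|x_*-y|}$, the factor $\mathcal{K}$ collects the contributions of $\nabla_y\cdot\frac{x_*-y}{|x_*-y|^{3+\alpha}}=(1+\alpha)|x_*-y|^{-3-\alpha}$ together with the derivative of $(1+z^2)^{-\frac{3+\alpha}{2}}$. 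Combining this with the second kind of term, which also carries the factor $h(x_*)-h(y)\ge0$, we get
\begin{align*}
\Phi'(t)=-\PV\int_{\R^2}\frac{(h(x_*)-h(y))\,Q(z,w,\nabla f(x_*))}{|x_*-y|^{3+\alpha}(1+z^2)^{\frac{5+\alpha}{2}}}dy ,
\end{align*}
with $Q$ an explicit quadratic polynomial in the slopes. Its leading constant is $(1+\alpha)$, coming from the divergence, and its negative parts are of size at most $(5+\alpha)$ times squared slopes, coming from $(3+\alpha)$ plus the factor $2$ in the kernel derivative.

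\textbf{Step 2: sign of $Q$.} Every slope appearing in $Q$ ($z$, $w$, $\nabla f(x_*)$) is bounded by $\|\nabla f\|_{L^\infty}(t)$; for $z$ this follows from the mean value theorem. So if $\|\nabla f\|^2_{L^\infty}(5+\alpha+\varepsilon)<1+\alpha-\varepsilon$, then $Q\ge\varepsilon(1+z^2)>0$. This is exactly the threshold in \eqref{boundedness assumption on nabla f0}, and the extra $\varepsilon$ absorbs mixed terms via Young's inequality. We then get $\Phi'(t)\le0$. The same computation at a minimum of $h$ gives the reverse inequality, and rotating $e$ covers every direction.

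\textbf{Step 3: continuity argument.} Let $T^*$ be the supremum of times $t\le T$ for which $\|\nabla f\|_{L^\infty}(s)\le\|\nabla f_0\|_{L^\infty}$ on $[0,t]$. Since the inequality in \eqref{boundedness assumption on nabla f0} is strict and $\nabla f$ is continuous in time, on $[0,T^*]$ the slope condition of Step 2 holds. By Step 2 the maximum of $h$ is nonincreasing and the minimum is nondecreasing, so the bound propagates past $T^*$. Hence $T^*=T$, which gives \eqref{max principle for nabla f}.

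\textbf{Main obstacle.} The hard part is Step 1. We must carry out the integration by parts rigorously near $y=x_*$ (excise $\mathcal{B}_r(x_*)$, show the boundary terms vanish as $r\to0$ using $\nabla h(x_*)=0$ and $\alpha<1$, as in Theorem~\ref{max principle thm}), and we must organise the algebra so that every term factors through $h(x_*)-h(y)$. The $(5+\alpha)$ coefficient, and hence the sharp threshold, only emerges after the right grouping. If direct grouping fails, I would introduce an antiderivative trick like $H$ in Lemma~\ref{ode lemma} to convert the offending non-sign-definite term into one multiplied by $\Delta_y|x_*-y|^{-\alpha}\ge0$.
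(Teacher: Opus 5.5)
Your proposal is correct and is essentially the paper's argument. The only difference is that the paper maximises $|\nabla f|^2$ at $x_{\#}$ rather than a directional derivative; this is equivalent, taking $e=\nabla f(x_{\#})/|\nabla f(x_{\#})|$. The paper then integrates the $\nabla\partial_{x_i} f(x-y)$ term by parts and shows that the combined coefficient $C_\alpha=E_\alpha+B_\alpha-2$, which is exactly your $Q/(1+z^2)$, exceeds $\varepsilon$, before closing with a continuity argument.

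The grouping you flagged as the main obstacle works directly. The $w$-terms from the divergence and from the kernel derivative cancel, leaving $Q=(1+\alpha)-2z^2+(3+\alpha)zv$ with $v=\nabla f(x_*)\cdot\frac{x_*-y}{|x_*-y|}$, so $Q\ge(1+\alpha)-(5+\alpha)\|\nabla f\|_{L^\infty}^2>\varepsilon(1+z^2)$ with no Young's inequality or $H$-type antiderivative needed. Run the argument over all unit directions $e$, not only $e_1,e_2$, to get the Euclidean-norm bound.
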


\begin{proof}
The method is reminiscent of the one given in \cite{constantin2016muskat} with some refinements. For $i\in\{1,2\}$, we apply $\partial_{x_i}$ on \eqref{contour eqn} to get
\begin{align*}
\frac{\partial}{\partial t}(\partial_{x_i}f(x,t))=\mathbf{Q}^{i}_1(x,t)+\mathbf{Q}^{i}_2(x,t)+\mathbf{Q}^{i}_3(x,t),
\end{align*}
where
\begin{align*}
\mathbf{Q}^{i}_1(x,t)&=\PV\int_{\R^2}\frac{\nabla\partial_{x_i}f(x,t)\cdot y}{[|y|^2+(f(x,t)-f(x-y,t))^2)]^\frac{3+\alpha}{2}}dy,\\
\mathbf{Q}^{i}_2(x,t)&=-\PV\int_{\R^2}\frac{\nabla\partial_{x_i}f(x-y,t)\cdot y}{[|y|^2+(f(x,t)-f(x-y,t))^2)]^\frac{3+\alpha}{2}}dy,\\
\mathbf{Q}^{i}_3(x,t)&=-\PV\int_{\R^2}\frac{\partial_{x_i}f(x,t) - \partial_{x_i}f(x-y,t)}{[|y|^2+(f(x,t)-f(x-y,t))^2)]^\frac{3+\alpha}{2}}E_{\alpha}(x,y,t)dy
\end{align*}
with
\begin{align*}
E_{\alpha}(x,y,t)=(3+\alpha)\left[\frac{(f(x,t)-f(x-y,t))(\nabla f(x,t)-\nabla f(x-y,t))\cdot y}{[|y|^2+(f(x,t)-f(x-y,t))^2)}\right].
\end{align*}
Integrating by parts, $\mathbf{Q}^{i}_2$ is given by
\begin{align*}
\mathbf{Q}^{i}_2&=\PV \int_{\R^2}\frac{2\partial_{x_i}f(x,t) - 2\partial_{x_i}f(x-y,t)}{[|y|^2+(f(x,t)-f(x-y,t))^2)]^\frac{3+\alpha}{2}}dy\\
&\qquad - \PV\int_{\R^2}\frac{2\partial_{x_i}f(x,t) - 2\partial_{x_i}f(x-y,t)}{[|y|^2+(f(x,t)-f(x-y,t))^2)]^\frac{3+\alpha}{2}}B_{\alpha}(x,y,t)dy
\end{align*}
with
\begin{align*}
B_{\alpha}(x,y,t)=(3+\alpha)\left[\frac{(f(x,t)-f(x-y,t))\nabla f(x-y,t)\cdot y + |y|^2}{[|y|^2+(f(x,t)-f(x-y,t))^2)}\right].
\end{align*}
Hence by adding $\mathbf{Q}^{i}_2$ and $\mathbf{Q}^{i}_3$,
\begin{align*}
\mathbf{Q}^{i}_2+\mathbf{Q}^{i}_3=-\PV\int_{\R^2}\frac{\partial_{x_i}f(x,t) - \partial_{x_i}f(x-y,t)}{[|y|^2+(f(x,t)-f(x-y,t))^2)]^\frac{3+\alpha}{2}}C_{\alpha}(x,y,t)dy
\end{align*}
where 
\begin{align*}
C_{\alpha}(x,y,t)&=E_{\alpha}(x,y,t)+B_{\alpha}(x,y,t)-2\\
&=1+\alpha+\frac{3+\alpha}{1+(D_y f(x)))^2}\times\left[(D_y f(x))\nabla f(x-y)\cdot\frac{y}{|y|}-(D_y f(x))^2\right]
\end{align*}
with $D_y(\cdot)$ being defined in \eqref{def of difference operator}. Define a function $\Phi(t)$ by
\begin{align}\label{def of phi}
\Phi(t)=\max_{x\in\R^2}\left\{\sum_{i=1}^2(\partial_{x_i}f(x,t))^2\right\},
\end{align}
then there exists $x_{\#}\in\R^2$ such that $\Phi(t)=\sum_{i=1}^2(\partial_{x_i}f(x_{\#},t))^2$. Therefore we have
\begin{align}\label{de for phi}
\frac{\partial\Phi}{\partial t}(t)=2\sum_{i=1}^2\partial_{x_i}f(\mathbf{Q}^{i}_1+\mathbf{Q}^{i}_2+\mathbf{Q}^{i}_3)(x_{\#},t).
\end{align}
Since $\sum_{i=1}^2(\partial_{x_i}f(x,t))^2$ attains its maximum at $x=x_{\#}$, dropping the variable $t$ for simplicity, we have 
\begin{align*}
\sum_{i=1}^2\partial_{x_i}f(x_{\#})\mathbf{Q}^{i}_1(x_\#)=\PV\int_{\R^2}\frac{\sum_{i=1}^2\partial_{x_i}f(x_{\#})\nabla\partial_{x_i}f(x_{\#})\cdot y}{[|y|^2+(f(x_{\#})-f(x_{\#}-y))^2)]^\frac{3+\alpha}{2}}dy=0.
\end{align*}
Also, by symmetry, we have
\begin{align*}
\PV\int_{\R^2}\frac{\sum_{i=1}^2\partial_{x_i}f(x_{\#})\partial_{x_i}f(x_{\#}-y)}{[|y|^2+(f(x_{\#})-f(x_{\#}-y))^2)]^\frac{3+\alpha}{2}}C_{\alpha}(x_{\#},y)=0,
\end{align*}
and hence the identity \eqref{de for phi} can be reduced to
\begin{align}\label{de for phi revised}
\frac{\partial\Phi}{\partial t}(t)=-\PV\int_{\R^2}\frac{\sum_{i=1}^2(\partial_{x_i}f(x_{\#}))^2}{[|y|^2+(f(x_{\#})-f(x_{\#}-y))^2)]^\frac{3+\alpha}{2}}C_{\alpha}(x_{\#},y).
\end{align}
Since for $\|\nabla f\|_{L^\infty}<\sqrt{\frac{1+\alpha-\varepsilon}{5+\alpha+\varepsilon}}$, we have
\begin{align}\label{bound on C(x,y)}
C_{\alpha}(x_{\#},y)\ge 1+\alpha-\frac{2(3+\alpha)\|\nabla f\|^2_{L^\infty}}{1+\|\nabla f\|^2_{L^\infty}}>\varepsilon,
\end{align}
it implies that the right side of \eqref{de for phi revised} is always non-positive. By a bootstrap argument, we can show that if $\Phi(0)<\frac{1+\alpha-\varepsilon}{5+\alpha+\varepsilon}$, then $\Phi(t)<\frac{1+\alpha-\varepsilon}{5+\alpha+\varepsilon}$ for all $t\ge0$. It completes the proof of \eqref{max principle for nabla f}.
\end{proof}
\begin{rem}
By taking $\alpha=0$ and $\varepsilon=\frac{2}{5}$, Theorem~\ref{max principle nabla f thm} gives the results of \cite[Theorem~4.1]{constantin2016muskat}.
\end{rem}

With the help of Theorem~\ref{max principle nabla f thm}, the following theorem gives a decay estimate on $\|\nabla f\|_{L^\infty}$ in time provided that $\|\nabla f\|_{L^1}$ is bounded.

 \begin{thm}\label{decay of nabla f thm}
Let $f_0\in H^k(\R^2)$ for $k\ge4$. Then for any $\alpha\in[0,1)$ and $\varepsilon\in(0,1+\alpha)$, if $f$ is the unique solution to \eqref{contour eqn} on $[0,T]$ with $f_0$ satisfies \eqref{boundedness assumption on nabla f0}, and if there exists $M_{T}>0$ such that 
\begin{align}\label{boundedness condition on nabla f in L1}
\sup_{t\in[0,T]}\|\nabla f\|_{L^1}(t)\le M_{T},
\end{align}
then the following bound holds for all $t\in[0,T]$:
\begin{align}\label{decay on nabla f in L infty norm}
\|\nabla f\|_{L^\infty}(t)\le\frac{\|\nabla f_0\|_{L^\infty}}{[1+C_{\#}(\|f_0\|_{L^\infty},\|\nabla f_0\|_{L^\infty},M_{T},\varepsilon,\alpha)t]^\frac{2}{1+\alpha}},
\end{align}
where $C_{\#}(\|f_0\|_{L^\infty},\|\nabla f_0\|_{L^\infty},M_{T},\varepsilon,\alpha)>0$ is a constant which depends on $\|f_0\|_{L^\infty}$, $\|\nabla f_0\|_{L^\infty}$, $M_{T}$, $\varepsilon$ and $\alpha$.
\end{thm}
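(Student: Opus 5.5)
We combine the identity \eqref{de for phi revised} from the proof of Theorem~\ref{max principle nabla f thm} with the lower bound \eqref{bound on C(x,y)}. By Theorem~\ref{max principle nabla f thm}, $\|\nabla f\|_{L^\infty}(t)\le\|\nabla f_0\|_{L^\infty}$ throughout $[0,T]$. Hence $C_\alpha(x_\#,y)>\varepsilon$ at every time. Writing $\Phi(t)=\|\nabla f\|_{L^\infty}^2(t)=|\nabla f(x_\#,t)|^2$, this gives for almost every $t$
\begin{align*}
\frac{d\Phi}{dt}(t)\le -\varepsilon\,\Phi(t)\int_{\R^2}\frac{dy}{[|y|^2+(f(x_\#)-f(x_\#-y))^2]^{\frac{3+\alpha}{2}}}.
\end{align*}
By Theorem~\ref{max principle thm}, $|f(x_\#)-f(x_\#-y)|\le 2\|f_0\|_{L^\infty}$. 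So it suffices to bound from below the integral over a suitable set of $y$, in terms of a positive power of $\Phi$.

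The main obstacle is that the integrand no longer carries the factor $\partial_{x_i}f(x_\#)-\partial_{x_i}f(x_\#-y)$, since it was cancelled by symmetry. Naively the integral is only $\gtrsim 1/\|f_0\|_{L^\infty}^{3+\alpha}$ (or divergent near $y=0$), which gives exponential rather than algebraic decay, and that is not what the statement says. I would therefore redo the reduction more carefully. Keep the exact expression
\[
\frac{d\Phi}{dt}=-2\,\PV\int \frac{\nabla f(x_\#)\cdot(\nabla f(x_\#)-\nabla f(x_\#-y))\,C_\alpha(x_\#,y)}{[\cdots]^{\frac{3+\alpha}{2}}}\,dy .
\]
Since $x_\#$ maximises $|\nabla f|^2$, we have $\nabla f(x_\#)\cdot(\nabla f(x_\#)-\nabla f(x_\#-y))\ge \tfrac12|\nabla f(x_\#)-\nabla f(x_\#-y)|^2\ge0$. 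The integrand is therefore nonnegative, and we may restrict to a ball $\mathcal{B}_r(x_\#)$ together with the set $V_1=\{y\in\mathcal B_r: \nabla f(x_\#)\cdot(\nabla f(x_\#)-\nabla f(x_\#-y))\ge \Phi/2\}$. On $V_1$ the integrand is at least $\varepsilon\Phi/(2[r^2+4\|f_0\|_{L^\infty}^2]^{\frac{3+\alpha}{2}})$.

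The measure of $V_1$ is controlled exactly as in the proof of Theorem~\ref{decay of f thm}, with \eqref{boundedness condition on nabla f in L1} replacing mass conservation. On the complement $V_2=\mathcal B_r\setminus V_1$ we have $\nabla f(x_\#)\cdot\nabla f(x_\#-y)>\Phi/2$, hence $|\nabla f(x_\#-y)|>\Phi^{1/2}/2$. This gives $M_T\ge\|\nabla f\|_{L^1}\ge \tfrac12\Phi^{1/2}|V_2|$, that is, $|V_1|\ge \pi r^2-2M_T\Phi^{-1/2}$. Choosing $r^2\sim (4M_T/\pi+1)\Phi^{-1/2}$ yields
\[
\frac{d}{dt}\Phi\le -c\,\Phi^{1/2}\,[r^2+4\|f_0\|^2_{L^\infty}]^{-\frac{3+\alpha}{2}},
\]
and $r^2\le C\Phi^{-1/2}$ then produces a power $\Phi^{1/2+\frac{3+\alpha}{4}}$, up to constants involving $\|\nabla f_0\|_{L^\infty}$ (used to absorb $\|f_0\|_{L^\infty}^2$ against $\Phi^{-1/2}$, as done in Theorem~\ref{decay of f thm}).

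Finally, rewrite the inequality in terms of $\|\nabla f\|_{L^\infty}=\Phi^{1/2}$ to obtain an ODE inequality $\frac{d}{dt}\|\nabla f\|_{L^\infty}\le -\tilde C_\#\|\nabla f\|_{L^\infty}^{\frac{3+\alpha}{2}}$. The exponent bookkeeping is where I would tune the choice of $r$ so that it matches the one in Theorem~\ref{decay of f thm}. Integrating this Bernoulli-type inequality gives \eqref{decay on nabla f in L infty norm}, with $C_\#=\frac{1+\alpha}{2}\|\nabla f_0\|_{L^\infty}^{\frac{1+\alpha}{2}}\tilde C_\#$.
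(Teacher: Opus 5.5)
Your proposal is correct and follows the paper's proof essentially step for step. Like the paper, you work with the full-difference form $\mathbf{Q}_4$ rather than \eqref{de for phi revised}, restrict to your set $V_1$ (the paper's $W_1$), and bound $|V_2|\le 2M_T\Phi^{-1/2}$ using \eqref{boundedness condition on nabla f in L1}. You then choose $r^2\sim\Phi^{-1/2}$ to reach $\Phi'\le -c\,\Phi^{\frac{5+\alpha}{4}}$ and integrate the resulting Bernoulli inequality. Your inequality $\nabla f(x_\#)\cdot(\nabla f(x_\#)-\nabla f(x_\#-y))\ge\tfrac12|\nabla f(x_\#)-\nabla f(x_\#-y)|^2$ also justifies the nonnegativity of the integrand, which the paper only asserts.
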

\begin{proof}
Without loss of generality, we assume that $\|\nabla f\|_{L^\infty}(t)>0$. Recall from the proof of Theorem~\ref{max principle nabla f thm} that
\begin{align}\label{de for phi using Q4}
\frac{\partial\Phi}{\partial t}(t)=-\mathbf{Q}_4(x_{\#},t),
\end{align}
where $\Phi$ is defined in \eqref{def of phi} and $\mathbf{Q}_4$ is given by
\begin{align*}
\mathbf{Q}_4(x_{\#},t)=\PV\int_{\R^2}\frac{\sum_{i=1}^2\partial_{x_i}f(x_{\#},t)(\partial_{x_i}f(x_{\#},t)-\partial_{x_i}f(x_{\#}-y,t))}{[|y|^2+(f(x_{\#},t)-f(x_{\#}-y,t))^2)]^\frac{3+\alpha}{2}}C_{\alpha}(x_{\#},y,t).
\end{align*}
Dropping the variable $t$ for simplicity, we define the sets $W_1$, $W_2$ by
\begin{align*}
W_1 &= \left\{y\in \mathcal{B}_r(x_{\#}): \sum_{i=1}^2(\partial_{x_i}f(x_{\#})(\partial_{x_i}f(x_{\#}) - \partial_{x_i}f(x_{\#}-y)) \ge \frac{1}{2}\sum_{i=1}^2(\partial_{x_i}f(x_{\#}))^2\right\},\\
W_2 &= \left\{y\in \mathcal{B}_r(x_{\#}): \sum_{i=1}^2(\partial_{x_i}f(x_{\#})(\partial_{x_i}f(x_{\#}) - \partial_{x_i}f(x_{\#}-y)) < \frac{1}{2}\sum_{i=1}^2(\partial_{x_i}f(x_{\#}))^2\right\},
\end{align*}
where $\mathcal{B}_r(x_{\#})$ is the ball centred at $x_{\#}$ with radius $r$. Since the integrand of $\mathbf{Q}_4$ is non-negative, we apply the bounds \eqref{max principle for f} and \eqref{bound on C(x,y)} to get
\begin{align*}
\mathbf{Q}_4&\ge \int_{W_1}\frac{\sum_{i=1}^2\partial_{x_i}f(x_{\#})(\partial_{x_i}f(x_{\#})-\partial_{x_i}f(x_{\#}-y))}{[|y|^2+(f(x_{\#})-f(x_{\#}-y))^2)]^\frac{3+\alpha}{2}}C_{\alpha}(x_{\#},y)\\
&\ge\frac{\varepsilon\Phi(t)}{2[r^2+4\|f_0\|^2_{L^\infty}]^\frac{3+\alpha}{2}}|W_1|.
\end{align*}
On the other hand, $|W_1|=\pi r^2-|W_2|$ and using the bound \eqref{boundedness condition on nabla f in L1},
\begin{align*}
M_T\Phi(t)^\frac{1}{2}\ge\|\nabla f\|_{L^1}\|\nabla f\|_{L^\infty}&\ge\int_{W_2}\sum_{i=1}^2\partial_{x_i}f(x_{\#})\partial_{x_i}f(x_{\#}-y)\\
&\ge\int_{W_2}\left(\sum_{i=1}^2(\partial_{x_i}f(x_{\#}))^2-\frac{1}{2}\sum_{i=1}^2(\partial_{x_i}f(x_{\#}))^2\right)\\
&\ge\frac{|W_2|\Phi(t)}{2}.
\end{align*}
Hence we have
\begin{align*}
|W_1|\ge\frac{\pi r^2\Phi(t)^\frac{1}{2}-2M_T}{\Phi(t)^\frac{1}{2}},
\end{align*}
which further implies
\begin{align*}
\mathbf{Q}_4\ge\frac{\varepsilon\Phi(t)^\frac{1}{2}(\pi r^2\Phi(t)^\frac{1}{2}-2M_T)}{2[r^2+4\|f_0\|^2_{L^\infty}]^\frac{3+\alpha}{2}}.
\end{align*}
By choosing $r=\left(\frac{2M_T/\pi + 1}{\Phi^\frac{1}{2}(t)}\right)^\frac{1}{2}$ and applying the bound \eqref{max principle for nabla f} to $\Phi(t)^\frac{1}{2}$, we obtain
\begin{align}\label{lower bound on Q4}
\mathbf{Q}_4\ge\frac{\varepsilon\Phi(t)^\frac{5+\alpha}{4}}{2[1+2M_T/\pi+4\|f_0\|^2_{L^\infty}\|\nabla f_0\|_{L^\infty}]^\frac{3+\alpha}{2}}.
\end{align}
Using \eqref{lower bound on Q4} on \eqref{de for phi using Q4}, we conclude that
\begin{align*}
\frac{\partial\Phi}{\partial t}(t)\le -\frac{\varepsilon\Phi(t)^\frac{5+\alpha}{4}}{2[1+2M_T/\pi+4\|f_0\|^2_{L^\infty}\|\nabla f_0\|_{L^\infty}]^\frac{3+\alpha}{2}},
\end{align*}
and by integrating the above inequality over time $t$, there exists a positive constant $C_{\#}(\|f_0\|_{L^\infty},\|\nabla f_0\|_{L^\infty},M_{T},\varepsilon,\alpha)$ such that
\begin{align}\label{decay on phi}
\Phi(t)\le\frac{\Phi(0)}{[1+C_{\#}(\|f_0\|_{L^\infty},\|\nabla f_0\|_{L^\infty},M_{T},\varepsilon,\alpha)t]^\frac{4}{1+\alpha}}.
\end{align}
The result \eqref{decay on nabla f in L infty norm} then follows immediately from \eqref{decay on phi}.
\end{proof}
\begin{rem}
Under the assumptions \eqref{positivity assumption on f0}, \eqref{boundedness assumption on nabla f0} and \eqref{boundedness condition on nabla f in L1}, we can apply the results \eqref{decay on f in L infty norm} and \eqref{decay on nabla f in L infty norm} to further obtain
\begin{align}\label{decay on f in W 1,infty norm}
\|f\|_{W^{1,\infty}}(t)\le\frac{\|f_0\|_{W^{1,\infty}}}{(1+C_{*}t)^\frac{2}{1+\alpha}},
\end{align}
where $C_*=\min\{C(\|f_0\|_{L^\infty},\|f_0\|_{L^1},\alpha),C_{\#}(\|f_0\|_{L^\infty},\|\nabla f_0\|_{L^\infty},M_{T},\varepsilon,\alpha)\}>0$.
\end{rem}


\section{Convergence of solutions as $\alpha\to0^{+}$}\label{convergence for alpha section}

In this section, we address the convergence of solutions as $\alpha\to0^{+}$. To begin with, using Theorem~\ref{local existence thm} and Remark~\ref{uniform bound remark}, for $\alpha\in[0,\frac{1}{2})$, $k\ge4$ and $f_0\in H^k(\R^2)$, there exists $\bar{T}>0$ such that $f_\alpha$ and $f$ are functions in $C^1([0,\bar{T}];H^k(\R^2))$ satisfying
\begin{align*}
\frac{\partial f_{\alpha}(x,t)}{\partial t} &= \PV\int_{\mathbb{R}^2}  \frac{\left(\nabla f_{\alpha}(x,t) - \nabla f_{\alpha}(x-y,t)\right)y}{\left[|y|^{2} +\left( f_{\alpha}(x,t) - f_{\alpha}(x-y,t)\right)^2\right]^{\frac{3+\alpha}{2}}} \, dy \\
f_{\alpha}({x,0}) &= f_{0}(x)
\end{align*}
and
\begin{align*}
\frac{\partial f}{\partial t}(x,t) &= \PV\int_{\mathbb{R}^2}  \frac{\left(\nabla f(x,t) - \nabla f(x-y,t)\right)y}{\left[|y|^{2} +\left( f(x,t) - f(x-y,t)\right)^2\right]^{\frac{3}{2}}} \, dy \\
f(x,0) &= f_{0}(x)
\end{align*}
as well as the uniform bound \eqref{uniform bound on H4 norm} with $\bar{\alpha}=\frac{1}{2}$. We further define $A_\alpha$ and $A_{0}$ by
\begin{align*} 
A_{\alpha} &\triangleq A_{\alpha}(x,y,t) = |y|^{2}+(f_{\alpha}(x, t)-f_{\alpha}(x-y, t))^{2} \\
A_{0} &\triangleq A_{0}(x,y,t) = |y|^{2}+(f(x, t)-f(x-y, t))^{2}.
\end{align*}
The following theorem shows that $f_\alpha$ converges to $f$ in $L^2$ as $\alpha\to0^+$.
\begin{thm}\label{L2 convergence thm}
For $\alpha\in[0,\frac{1}{2})$, $k\ge4$ and $f_0\in H^k(\R^2)$, assume that $f_\alpha$ and $f$ are solutions in $C^1([0,\bar{T}];H^k(\R^2))$ as defined above. Then for all $t\in[0,\bar{T}]$, we have
\begin{align}\label{L2 convergence}
\lim_{\alpha\to0^+}\|f_\alpha - f\|_{L^2}(t)=0.
\end{align}
\end{thm}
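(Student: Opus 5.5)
We will run an $L^2$ energy estimate on the difference $w=f_\alpha-f$ and close it with Gr\"onwall's inequality. The kernel mismatch $A_\alpha^{-(3+\alpha)/2}-A_0^{-3/2}$ enters only as a forcing term, which vanishes as $\alpha\to0^+$. Throughout we use the uniform bound \eqref{uniform bound on H4 norm} with $\bar\alpha=\frac12$. Since $\sup_{\alpha,t}\|f_\alpha\|_{H^4}+\|f\|_{H^4}\le 2\bar M$, Sobolev embedding gives uniform $C^{2,\delta}$ bounds on $f_\alpha$ and $f$, for any $\delta\in(0,1)$ and with constants independent of $\alpha$.

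\textbf{Step 1 (splitting the equation).} Subtracting the two contour equations, we write
\begin{align*}
\partial_t w=\PV\int_{\R^2}\frac{(\nabla w(x)-\nabla w(x-y))\cdot y}{A_\alpha^{\frac{3+\alpha}{2}}}dy+\PV\int_{\R^2}(\nabla f(x)-\nabla f(x-y))\cdot y\Big(A_\alpha^{-\frac{3+\alpha}{2}}-A_0^{-\frac{3+\alpha}{2}}\Big)dy+R_\alpha(x,t),
\end{align*}
where
\begin{align*}
R_\alpha=\PV\int_{\R^2}(\nabla f(x)-\nabla f(x-y))\cdot y\Big(A_0^{-\frac{3+\alpha}{2}}-A_0^{-\frac32}\Big)dy.
\end{align*}
We multiply by $w$ and integrate over $\R^2$.

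\textbf{Step 2 (the first two terms).} The first two terms are treated exactly as $\mathbf{I}_6$, $\mathbf{I}_7$, $\mathbf{I}_8$ in the uniqueness part of Theorem~\ref{local existence thm}, with $f_1=f_\alpha$ and $f_2=f$.
\begin{itemize}
\item In the first term we integrate by parts and symmetrize. This produces a nonpositive dissipative piece plus commutator terms bounded by $C\|w\|_{L^2}^2$, with $C$ depending only on the $C^{2,\delta}$ bounds.
\item In the second term, the mean value theorem gives $|A_\alpha^{-(3+\alpha)/2}-A_0^{-(3+\alpha)/2}|\lesssim |w(x)-w(x-y)|\,|y|^{-4-\alpha}\cdot|y|\,\|f\|_{C^1}$. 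Pairing this with the factor $|\nabla f(x)-\nabla f(x-y)||y|\lesssim \min(|y|^2,|y|)$ leaves a kernel of order $|y|^{-2-\alpha}|w(x)-w(x-y)|$.
\end{itemize}
This kernel is not integrable by crude bounds. It is handled by the same symmetrization/cancellation as $\mathbf{I}_8$: we expand $w(x)-w(x-y)$ to first order, so that the odd part cancels in the principal value and the remainder gains a factor $|y|^\delta$. The result is $C\|w\|_{L^2}^2$ plus possibly a small multiple of the dissipation.

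\textbf{Step 3 (the forcing term, the crux).} It remains to show $\sup_{t\le\bar T}\|R_\alpha(t)\|_{L^2}\to0$ as $\alpha\to0^+$. Writing $A_0^{-\frac{3+\alpha}{2}}-A_0^{-\frac32}=A_0^{-\frac32}(A_0^{-\alpha/2}-1)$ and using $|A_0^{-\alpha/2}-1|\le \alpha|\log A_0|\max(1,A_0^{-\alpha/2})$, we split the integral into two regions.
\begin{itemize}
\item For $|y|<1$, we Taylor expand $\nabla f(x)-\nabla f(x-y)=\nabla^2f(x)y+O(|y|^{1+\delta})$. The leading term is odd in $y$ up to the even factor $A_0$. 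We symmetrize as in the estimate of $M_\alpha(f)$ in Theorem~\ref{local existence thm}, using $A_0(x,y)-A_0(x,-y)=O(|y|^3)$. The integrand is then bounded by $\alpha |y|^{-2-\alpha+\delta}|\log|y||$ times $C^{2,\delta}$ norms, which is integrable and yields $O(\alpha)$.
\item For $|y|>1$, we bound by $\alpha\log(|y|^2+4\|f\|_{L^\infty}^2)|y|^{-3}(|\nabla f(x)|+|\nabla f(x-y)|)$. Minkowski's inequality then gives an $L^2$ bound of order $C\alpha\|\nabla f\|_{L^2}$.
\end{itemize}
Hence $\|R_\alpha\|_{L^2}\le C\alpha$ uniformly in $t$.

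\textbf{Conclusion.} Combining the steps,
\begin{align*}
\frac{d}{dt}\|w\|_{L^2}^2\le C\|w\|_{L^2}^2+2\|w\|_{L^2}\|R_\alpha\|_{L^2}\le (C+1)\|w\|_{L^2}^2+C\alpha^2.
\end{align*}
Since $w(0)=0$, Gr\"onwall's inequality gives $\|w\|_{L^2}^2(t)\le C\alpha^2 e^{(C+1)\bar T}$, which tends to $0$ as $\alpha\to0^+$. This proves \eqref{L2 convergence}.

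The main obstacle is the second term of Step 2 together with the small-$|y|$ part of Step 3. Both require the principal-value cancellation, and the constants must be checked to be uniform in $\alpha\in[0,\frac12)$.
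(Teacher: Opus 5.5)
Your strategy matches the paper's: an $L^2$ energy estimate for the difference, the $A_\alpha$-kernel term handled as in the uniqueness proof, an $O(\alpha)$ forcing term, and Gr\"onwall. The only difference is that you telescope in the opposite order. You first swap $A_\alpha\to A_0$ at exponent $\frac{3+\alpha}{2}$ and then change the exponent at $A_0$, so your forcing term $R_\alpha$ involves only the fixed limit $f$.

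\textbf{The gap: Step 3 on $\{|y|>1\}$.} You treated this as the easy region, but you dropped the factor $y$ in $(\nabla f(x)-\nabla f(x-y))\cdot y$. The correct pointwise bound is
\[
\tfrac{\alpha}{2}\log\big(|y|^2+4\|f\|_{L^\infty}^2\big)\,|y|^{-2}\big(|\nabla f(x)|+|\nabla f(x-y)|\big).
\]
Since $|y|^{-2}\log|y|$ is not integrable on $\{|y|>1\}\subset\R^2$, Minkowski's inequality gives nothing. This is not just a lossy estimate. For fixed $\alpha>0$ one has $A_0^{-\frac{3+\alpha}{2}}-A_0^{-\frac32}\sim-|y|^{-3}$ as $|y|\to\infty$. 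Hence $\int_{|y|>1}\nabla f(x)\cdot y\,(A_0^{-\frac{3+\alpha}{2}}-A_0^{-\frac32})\,dy$ converges only conditionally, as a principal value at infinity. Your plan locates the difficulty near $y=0$ and misses that the real one is at infinity.

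\textbf{A repair.} Set $\Phi(A)=A^{-\frac{3+\alpha}{2}}-A^{-\frac32}$. For $A\ge1$ one has $|\Phi(A)|+A|\Phi'(A)|\le C\alpha A^{-\frac32}(1+\log A)$.

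In the $\nabla f(x)\cdot y$ piece, subtract $\Phi(|y|^2+f(x)^2)$. This is radial in $y$, so it integrates to zero against $y$ over $\{|y|>1\}$. The remainder is bounded by $C\alpha\|f\|_{L^\infty}|\nabla f(x)|\,|y|^{-4}\log(2+|y|)\,|f(x-y)|$, which is integrable.

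In the $\nabla f(x-y)\cdot y$ piece, write $\nabla f(x-y)=-\nabla_y[f(x-y)]$ and integrate by parts in $y$, as the paper does for $\mathbf{I}_2$ and $\mathbf{I}_3$. The new kernel $\nabla_y\cdot(y\,\Phi(A_0))$ is $O(\alpha|y|^{-3}\log(2+|y|))$, and there is a boundary term on $|y|=1$. Minkowski now gives $O(\alpha\|f\|_{L^2})$.

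The paper's one-line bound for $\mathbf{E}_2$ skips the same point: it also treats $\log A_\alpha$ as bounded. So this repair is needed in either ordering.

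\textbf{Two smaller corrections.}

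For $|y|<1$ in Step 3, the leading term $y\cdot\nabla^2 f(x)\,y$ is even, not odd. No cancellation is needed, since the integrand is already $O(\alpha|y|^{-1-\alpha}(1+|\log|y||))$. What you do need is an $L^2_x$ bound rather than a sup bound. Get it by writing $\nabla f(x)-\nabla f(x-y)=\int_0^1\nabla^2 f(x-sy)\,y\,ds$ and applying Minkowski.

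In Step 2, a first-order expansion of $w$ brings in $\nabla w$, which you do not control. Use Cauchy--Schwarz instead:
\[
\iint_{|y|<1}\frac{|w(x)|\,|w(x)-w(x-y)|}{|y|^{2+\alpha}}\,dy\,dx\le\Big(\iint\frac{|w(x)-w(x-y)|^2}{|y|^{3+\alpha}}\,dy\,dx\Big)^{\frac12}\Big(\frac{2\pi}{1-\alpha}\Big)^{\frac12}\|w\|_{L^2}.
\]
The first factor can be absorbed into the dissipation, uniformly in $\alpha$, because $A_\alpha\le(1+\|\nabla f_\alpha\|_{L^\infty}^2)|y|^2$.
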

\begin{proof}
Throughout this proof, $C_{\bar{M}}$ always denotes a positive constant that depends on $\bar{M}$ given in \eqref{uniform bound on H4 norm} but is independent of $\alpha$. 

Let $g_{\alpha}=f_{\alpha}(x, t)-f(x, t)$, then $g_{\alpha}$ satisfies
\begin{equation}\label{egn for g alpha}
\begin{split}
\frac{\partial g_{\alpha}}{\partial t}(x, t) & = \PV \int_{\mathbb{R}^{2}} \frac{\left(\nabla g_{\alpha}(x, t)-\nabla g_{\alpha}(x-y, t)\right) \cdot y}{A_{\alpha}^{\frac{3+\alpha}{2}}} d y \\
& \qquad+ \PV \int_{\mathbb{R}^{2}} \frac{(\nabla f(x, t)-\nabla f(x-y, t)) \cdot y}{A_{\alpha}^{\frac{3+\alpha}{2}}} d y \\
& \qquad- \PV \int_{\mathbb{R}^{2}} \frac{(\nabla f(x, t)-\nabla f(x-y, t)) \cdot y}{A_{0}^{\frac{3}{2}}} d y. 
\end{split}
\end{equation} 
Multiplying \eqref{egn for g alpha} by $g_{\alpha}$ and integrating with respect to $x$,
\begin{equation*} 
\begin{split}
 \frac{\partial}{\partial t} \int_{\mathbb{R}^{2}}\left|g_{\alpha}(x, t)\right|^{2} d x&=\PV \int_{\mathbb{R}^{2}} 2g_{\alpha}(x, t) \int_{\mathbb{R}^{2}} \frac{\left(\nabla g_{\alpha}(x, t)-\nabla g_{\alpha}(x-y, t)\right) \cdot y}{A_{\alpha}^{\frac{3+\alpha}{2}}} d y d x \\
&\,\,\,+\PV \int_{\mathbb{R}^{2}} 2g_{\alpha} \int_{\mathbb{R}^{2}}(\nabla f(x, t)-\nabla f(x-y, t)) \cdot y \cdot\left(\frac{1}{A_{\alpha}^{\frac{3+\alpha}{2}}}-\frac{1}{A_{\alpha}^\frac{3}{2}}\right) \\
&\,\,\,+\PV \int_{\mathbb{R}^{2}} 2g_{\alpha} \int_{\mathbb{R}^{2}}(\nabla f(x, t)-\nabla f(x-y, t)) \cdot y \cdot\left(\frac{1}{A_{\alpha}^\frac{3}{2}}-\frac{1}{A_{0}^\frac{3}{2}}\right) \\
&\triangleq \mathbf{E}_{1}+\mathbf{E}_{2}+\mathbf{E}_{3}. 
\end{split}
\end{equation*}
We estimate the terms $\mathbf{E}_{1}$, $\mathbf{E}_{2}$ and $\mathbf{E}_{3}$ as follows. First of all, we rewrite $\mathbf{E}_{1}$ as follows (we drop the variable $t$ for simplicity):
\begin{align*}
\mathbf{E}_{1}=\mathbf{E}_{1,1}+\mathbf{E}_{1,2}+\mathbf{E}_{1,3},
\end{align*}
where
\begin{align*}
\mathbf{E}_{1,1}&=\PV\int_{\mathbb{R}^{2}} 2g_{\alpha}(x) \int_{\mathbb{R}^{2}} \frac{\nabla g_{\alpha}(x) \cdot y}{A_{\alpha}^{\frac{3+\alpha}{ 2}} }\\
\mathbf{E}_{1,2}&=-\PV\int_{\mathbb{R}^{2}} 2g_{\alpha}(x) \int_{\mathbb{R}^{2}} \frac{g_{\alpha}(x)-g_{\alpha}(y)}{A_{\alpha}^{\frac{3+\alpha}{2}}} \\
\mathbf{E}_{1,3}&=\PV\int_{\mathbb{R}^{2}} 2(3+\alpha)g_{\alpha}(y) \int_{\mathbb{R}^{2}} (g_{\alpha}(x)-g_{\alpha}(y))\\
&\qquad\qquad\qquad\qquad\times\left[\frac{\left(f_{\alpha}(x)-f_{\alpha}(y)\right) \cdot\left(f_{\alpha}(x)-f_{\alpha}(y)-\nabla f_{\alpha}(y) \cdot(x-y)\right)}{A_{\alpha}^{\frac{5+\alpha}{2}}}\right]. 
\end{align*}
Upon integrating by parts, we have
\begin{align*}
\mathbf{E}_{1,1}&=\PV \int_{\mathbb{R}^{2}} (3+\alpha)|g_{\alpha}(x, t)|^{2}\int_{\mathbb{R}^{2}}\frac{\left(f_{\alpha}(x)-f_{\alpha}(x-y)\right)\left(\nabla f_{\alpha}(x)-\nabla f_{\alpha}(x-y)\right) \cdot y}{A_{\alpha}^{\frac{5+\alpha}{ 2}} }\\
&\le (3+\alpha)\left\|f_{\alpha}\right\|_{ C^{1}}\left\|g_{\alpha}\right\|_{L^{2}}^{2}+(3+\alpha) M_{\alpha}\left(f_{\alpha}\right)\left\|g_{\alpha}\right\|_{L^{2}}^{2}\\
&\le 4\left\|f_{\alpha}\right\|_{ C^{1}}\left\|g_{\alpha}\right\|_{L^{2}}^{2} + 4M_{\alpha}\left(f_{\alpha}\right)\left\|g_{\alpha}\right\|_{L^{2}}^{2}
\end{align*}
where $M_\alpha(\cdot)$ is given by \eqref{def of M alpha (f)}. Following the proof of Theorem~\ref{local existence thm}, Using the uniform bound \eqref{uniform bound on H4 norm}, we readily have
\begin{align*}
4\left\|f_{\alpha}\right\|_{ C^{1}}+4M_{\alpha}(f_{\alpha})\le C_{\bar{M}},
\end{align*}
and hence
\begin{align*}
\mathbf{E}_{1,1}\le C_{\bar{M}}\left\|g_{\alpha}\right\|_{L^{2}}^{2}.
\end{align*}
On the other hand, by making change of variables,
\begin{align*}
\mathbf{E}_{1,2}=-\frac{1}{2}\int_{\R^2}\int_{\R^2}\frac{(g_\alpha(x)-g_\alpha(y))^2}{A_{0}^\frac{3+\alpha}{2}}\le 0,
\end{align*}
while for $\mathbf{E}_{1,3}$, following the similar method for estimating $\mathbf{J}_3$ as in the proof of Theorem~\ref{local existence thm}, we have
\begin{align*}
\mathbf{E}_{1,3}\le 8\left[\left\|f_{\alpha}\right\|_{C^{2, \delta}}^{4}+\left\|f_{\alpha}\right\|_{C^{1}}\|f\|_{C^{2, \delta}}+1\right]\left\|g_{\alpha}\right\|_{L^{2}}^{2}\le C_{\bar{M}}\left\|g_{\alpha}\right\|_{L^{2}}^{2}.
\end{align*} 
Therefore we obtain
\begin{align}\label{bound on E1}
\mathbf{E}_{1}\le C_{\bar{M}}\left\|g_{\alpha}\right\|_{L^{2}}^{2}.
\end{align}
Next for the term $\mathbf{E}_2$, using mean value theorem and the uniform bound \eqref{uniform bound on H4 norm}, we have
\begin{align*}
\left|\frac{1}{A_{\alpha}^{\frac{3+\alpha}{2}}}-\frac{1}{A_{\alpha}^{3 / 2}}\right|&\le\left|-\frac{3+\alpha}{2}+\frac{3}{2}\right|\sup_{r\in(-\frac{3+\alpha}{2},-\frac{3}{2})}|A_{\alpha}^r||\log(A_\alpha)|\\
&\le\frac{\alpha}{2}|y|^{-3}\log(M^2)=\alpha|y|^{-3}\log(M).
\end{align*}
Therefore by applying the above estimate to $\mathbf{E}_2$, it implies
\begin{align}\label{bound on E2}
\mathbf{E}_2\le \alpha C_{\bar{M}}\left\|g_{\alpha}\right\|_{L^{2}}.
\end{align}
For the term $\mathbf{E}_3$, using mean value theorem and the uniform bound \eqref{uniform bound on H4 norm}, we also have
\begin{align*}
|A_{0}^\frac{3}{2}-A_{\alpha}^\frac{3}{2}|\le C_{\bar{M}}|g_{\alpha}(x)-g_{\alpha}(x-y)||y|^2,
\end{align*}
and hence
\begin{align}\label{bound on E3}
\mathbf{E}_3\le C_{\bar{M}}\left\|g_{\alpha}\right\|_{L^{2}}^{2}.
\end{align}
Combining \eqref{bound on E1}, \eqref{bound on E2} and \eqref{bound on E3}, we obtain
\begin{align}\label{full bound on g alpha}
\frac{\partial}{\partial t}\|g_{\alpha}\|^2_{L^2}\le 2C_{\bar{M}}\|g_{\alpha}\|^2_{L^2}+\alpha C_{\bar{M}}\|g_{\alpha}\|_{L^2}.
\end{align}
Applying Gr\"{o}nwall's inequality to \eqref{full bound on g alpha} and recalling that $g_{\alpha}(x,0)=0$, the results \eqref{L2 convergence} follows immediately by taking $\alpha\to0^{+}$.
\end{proof}
\begin{rem}\label{rem on Hk convergence}
For $1\le k\le 4$, by performing similar analysis on $\partial^k_{x_1} g_\alpha$ and $\partial^k_{x_2} g_\alpha$, for all $t\in[0,\bar{T}]$, we can also obtain
\begin{align*}
\lim_{\alpha\to0^+}\|\partial^k_{x_1}f_\alpha - \partial^k_{x_1}f\|_{L^2}(t)=\lim_{\alpha\to0^+}\|\partial^k_{x_2}f_\alpha - \partial^k_{x_2}f\|_{L^2}(t)=0.
\end{align*}
Hence we conclude that
\begin{align}\label{Hk convergence}
\lim_{\alpha\to0^+}\|f_\alpha - f\|_{H^k}(t)=0.
\end{align}
\end{rem}
Using Theorem~\ref{L2 convergence thm} and Remark~\ref{rem on Hk convergence}, we can further obtain $L^1$-convergence provided that the initial interface satisfies the condition \eqref{positivity assumption on f0}. The results are given in the following theorem.
\begin{thm}\label{L1 convergence thm}
For $\alpha\in[0,\frac{1}{2})$, $k\ge4$ and $f_0\in H^k(\R^2)$, assume that $f_\alpha$ and $f$ are solutions in $C^1([0,\bar{T}];H^k(\R^2))$ as defined above. If $f_0(x)\le0$ or $f_0(x)\ge0$ for all $x\in\R^2$, then for all $t\in[0,\bar{T}]$, we have
\begin{align}\label{L1 convergence}
\lim_{\alpha\to0^+}\|f_\alpha - f\|_{L^1}(t)=0.
\end{align}
\end{thm}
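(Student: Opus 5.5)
The plan is to exploit the sign condition together with conservation of mass, which turns $L^1$ control into control of integrals. We treat the case $f_0\ge0$; the case $f_0\le0$ is symmetric. As shown in the proof of Theorem~\ref{decay of f thm}, the sign of $f_0$ is preserved along the flow for every $\alpha\in[0,\frac12)$. Hence $f_\alpha(\cdot,t)\ge0$ and $f(\cdot,t)\ge0$ for all $t\in[0,\bar T]$. Moreover, integrating the contour equation over $\R^2$ and using the antisymmetry argument from that proof gives conservation of mass:
$$\int_{\R^2} f_\alpha(x,t)\,dx=\int_{\R^2} f(x,t)\,dx=\|f_0\|_{L^1}.$$
Consequently $\|f_\alpha\|_{L^1}(t)=\|f\|_{L^1}(t)=\|f_0\|_{L^1}$, uniformly in $\alpha$.

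Next, we would use the Brezis--Lieb/Scheffé type identity valid for nonnegative functions:
$$\|f_\alpha-f\|_{L^1}=2\int_{\R^2}(f-f_\alpha)_+\,dx+\int_{\R^2}(f_\alpha-f)\,dx=2\int_{\R^2}(f-f_\alpha)_+\,dx,$$
where the last integral vanishes because the masses agree. It therefore suffices to show $\int (f-f_\alpha)_+\to0$. Since $0\le (f-f_\alpha)_+\le f$ and $f(\cdot,t)\in L^1$, the tails are uniformly small. Given $\epsilon>0$, we choose $R$ with $\int_{|x|>R} f(x,t)\,dx<\epsilon$. On the ball $B_R$, Cauchy--Schwarz gives
$$\int_{B_R}(f-f_\alpha)_+\le \sqrt{\pi}\,R\,\|f_\alpha-f\|_{L^2}(t),$$
which tends to zero by Theorem~\ref{L2 convergence thm}. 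Hence $\limsup_{\alpha\to0^+}\|f_\alpha-f\|_{L^1}(t)\le2\epsilon$, and letting $\epsilon\to0$ gives \eqref{L1 convergence}. Remark~\ref{rem on Hk convergence} is not strictly needed here. If desired, it can be used to upgrade the argument to pointwise convergence, so that dominated convergence applies directly to $(f-f_\alpha)_+\le f$.

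The main obstacle is verifying rigorously that $f_\alpha$ lies in $L^1$ and that the mass identity holds. The hypotheses only give $f_0\in H^k$, so we implicitly need $f_0\in L^1$; otherwise the statement should be read with $\|f_0\|_{L^1}<\infty$ assumed, as in Theorem~\ref{decay of f thm}. For nonnegative $f_\alpha$, monotone convergence with cutoffs $\chi(x/R)$ will handle this. Concretely, we would differentiate $\int f_\alpha\chi(x/R)\,dx$ in time and symmetrize in the PV integral. The commutator term is bounded by $C_{\bar M}R^{-1}$ times integrable quantities, thanks to the uniform $H^4$ bound \eqref{uniform bound on H4 norm} and the decay $|y|^{-2-\alpha}$ of the kernel at infinity. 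Letting $R\to\infty$ then gives both finiteness and conservation of the mass.
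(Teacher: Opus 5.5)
Your argument is correct, and it reaches the conclusion by a different and more economical route than the paper. Both proofs use the same two structural facts, preservation of sign and conservation of mass, and both are in essence Scheff\'e's lemma. The difference is which mode of convergence each one feeds in.

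\begin{itemize}
\item The paper invokes Remark~\ref{rem on Hk convergence}, whose $H^k$ convergence is only sketched, together with Sobolev embedding, to get $\|f_\alpha-f\|_{L^\infty}\to0$ and hence a.e.\ convergence. It then applies the generalized dominated convergence theorem with dominating functions $|f_\alpha|+|f|$. Their integrals converge to $2\int|f|$ precisely because the masses agree.
\item You instead use three ingredients: the identity $\|f_\alpha-f\|_{L^1}=2\int_{\R^2}(f-f_\alpha)_+\,dx$ (valid once the masses coincide); the bound $(f-f_\alpha)_+\le f$, which makes the tails small uniformly in $\alpha$; and Cauchy--Schwarz on $B_R$. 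So only Theorem~\ref{L2 convergence thm} is needed.
\end{itemize}

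Your route removes the dependence on the only-sketched higher-order convergence. The paper's route gives the stronger intermediate fact of uniform convergence. That fact could also be obtained without the remark, by interpolating the $L^2$ convergence against the uniform bound \eqref{uniform bound on H4 norm}.

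You are right that $f_0\in L^1(\R^2)$ is needed. The paper uses it only tacitly, as in Theorem~\ref{decay of f thm}. Your cutoff argument for mass conservation is also more careful than the paper's appeal to symmetry. One clean way to run it is to write the right-hand side of \eqref{contour eqn} as $\nabla_x\cdot\PV\int_{\R^2}\frac{y}{|y|^{2+\alpha}}\,G(D_yf(x))\,dy$, with $G(0)=0$ and $G'(z)=(1+z^2)^{-\frac{3+\alpha}{2}}$. Then testing against $\chi(x/R)$ only sees this flux on the annulus $R<|x|<2R$.
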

\begin{proof}
We give the proof for the case when $f_0\ge0$, and the case for $f_0\le0$ is just similar. By Theorem~\ref{max principle thm} and the assumption that $f_0\ge0$, for all $\alpha\in[0,\frac{1}{2})$, we have $f_{\alpha}\ge0$ and $f\ge0$. Also from the proof of Theorem~\ref{decay of f thm}, we know that
\begin{align*}
\int_{\R^2}\frac{\partial f_\alpha}{\partial t}(x,t)dx = \int_{\R^2}\frac{\partial f}{\partial t}(x,t)dx = 0,
\end{align*}
and hence $\|f_\alpha\|_{L^1}(t)=\|f\|_{L^1}(t)=\|f_0\|_{L^1}(t) $ for all $\alpha\in[0,\frac{1}{2})$ and $t\ge0$.

By applying the Sobolev embedding theorem, together with the uniform bound \eqref{uniform bound on H4 norm}, the convergence result \eqref{Hk convergence} implies
\begin{align*}
\lim_{\alpha\to0^+}\|f_\alpha - f\|_{L^\infty}(t)=0,
\end{align*}
which gives $f_\alpha(\cdot,t)\to f(\cdot,t)$ a.e. as $\alpha\to0^+$ for all $t\ge0$. 

Define $h_{\alpha}=|f_\alpha - f|$, then $h_{\alpha}\in L^1$ and we have $h_{\alpha}\to0$ a.e. as $\alpha\to0^+$. Observe that $|h_{\alpha}|\le|f_\alpha| + |f|$ and $|f_\alpha| + |f|\to 2|f|$ a.e. as $\alpha\to0^+$ with 
\begin{align*}
\lim_{\alpha\to0^+}\int_{\R^2}(|f_\alpha| + |f|)(x,t)dx=2\int_{\R^2}|f|(x,t)dx,\qquad\text{for all $t\ge0$.}
\end{align*}
Therefore using dominated convergence theorem (see for example \cite{Fo99}), it implies
\begin{align*}
\lim_{\alpha\to0^+}\int_{\R^2}h_{\alpha}(x,t)dx=0,\qquad\text{for all $t\ge0$,}
\end{align*}
and we conclude that \eqref{L1 convergence} holds for all $t\ge0$.
\end{proof}

\appendix

\section{Existence of $k_0(\alpha)$}\label{existence of k0 and estimates on principal values sec}

In this appendix, we show the existence of $k_0(\alpha)$ which appears in Theorem~\ref{global existence thm}. To begin with, we give an estimate for the principal value of integral involving $\frac{\exp(irS)}{r^{1+\alpha}}$ with $S\in\R$. More precisely, we prove the following lemma.

\begin{lemma}\label{estimates on PV lem}
Let $\mathbf{S}\in\R$ and $\alpha\in(0,1)$. There exists a positive constant $C(\alpha)>0$ which depends on $\alpha$ but is independent of $\mathbf{S}$ such that
\begin{align}\label{bound on PV} 
 \left|\PV\int_{\R}\frac{\exp(i r \mathbf{S})}{r|r|^{\alpha}} \right|\leq C(\alpha)|\mathbf{S}|^{\alpha}.
\end{align}
\end{lemma}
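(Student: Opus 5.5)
The plan is to compute the principal value explicitly, using the parity of the integrand together with a scaling substitution, so that the constant $C(\alpha)$ comes out in closed form.

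First I split $\exp(ir\mathbf{S})=\cos(r\mathbf{S})+i\sin(r\mathbf{S})$.

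\emph{The cosine part.} The function $r\mapsto \cos(r\mathbf{S})/(r|r|^\alpha)$ is odd in $r$. Its principal value over symmetric sets $\{\varepsilon<|r|<R\}$ therefore vanishes identically. This includes the case $\mathbf{S}=0$, where the whole integrand reduces to $1/(r|r|^\alpha)$, so the PV is $0$ and \eqref{bound on PV} holds trivially.

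\emph{The sine part.} The function $r\mapsto \sin(r\mathbf{S})/(r|r|^\alpha)$ is even. Hence
\begin{align*}
\PV\int_{\R}\frac{\exp(ir\mathbf{S})}{r|r|^\alpha}\,dr = 2i\int_0^\infty \frac{\sin(r\mathbf{S})}{r^{1+\alpha}}\,dr .
\end{align*}
This integral converges as an improper integral. Near $r=0$ the integrand is $O(|\mathbf{S}|\,r^{-\alpha})$, which is integrable since $\alpha<1$. Near $r=\infty$ the Dirichlet test applies, because $r^{-1-\alpha}$ decreases monotonically to $0$ and $\sin(r\mathbf{S})$ has bounded primitives. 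The same estimates show that the symmetric truncations converge, so the PV is well defined.

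For $\mathbf{S}\neq0$ I substitute $u=r|\mathbf{S}|$ and use the oddness of $\sin$ to obtain
\begin{align*}
\int_0^\infty \frac{\sin(r\mathbf{S})}{r^{1+\alpha}}\,dr=\sgn(\mathbf{S})\,|\mathbf{S}|^{\alpha}\int_0^\infty \frac{\sin u}{u^{1+\alpha}}\,du .
\end{align*}
It remains to check that $K(\alpha):=\int_0^\infty u^{-1-\alpha}\sin u\,du$ is a finite number depending only on $\alpha$. It is finite by the same convergence argument, with the bound near $0$ from $|\sin u|\le u$, and at infinity by integrating by parts once:
\begin{align*}
\int_1^\infty u^{-1-\alpha}\sin u\,du=\cos 1-(1+\alpha)\int_1^\infty u^{-2-\alpha}\cos u\,du .
\end{align*}
This gives $|K(\alpha)|\le \frac{1}{1-\alpha}+2$. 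One can also identify $K(\alpha)=\Gamma(-\alpha)\sin(-\pi\alpha/2)=-\Gamma(-\alpha)\sin(\pi\alpha/2)>0$ via the classical Mellin formula $\int_0^\infty u^{s-1}\sin u\,du=\Gamma(s)\sin(\pi s/2)$, valid for $-1<s<1$ and here applied with $s=-\alpha$. This links the constant to the $|\Gamma(-\alpha)|$ appearing in Lemma~\ref{estimate on non-linear N lem}.

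Taking $C(\alpha)=2|K(\alpha)|$ then gives \eqref{bound on PV}, in fact with equality. The only delicate step is justifying convergence at both ends, especially exchanging the PV limit with the substitution. I would handle this by working with the truncations $\int_{\varepsilon}^{R}$ and passing $\varepsilon\to0$, $R\to\infty$ after scaling, with the integration by parts above giving the uniform control at infinity.
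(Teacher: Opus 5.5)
Your proof is correct and follows the same route as the paper's. Both arguments use parity to reduce the principal value to $2i\int_0^\infty \sin(r\mathbf{S})\,r^{-1-\alpha}\,dr$, and then extract the factor $|\mathbf{S}|^{\alpha}$ through the scaling $u=r|\mathbf{S}|$ with the cutoff at $r|\mathbf{S}|=1$. The only difference is that the paper splits at $\varepsilon=\mathbf{S}^{-1}$ before rescaling the tail, which leads it to $C(\alpha)=6/(1-\alpha)$.

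Rescaling first, as you do, gives the exact identity with $C(\alpha)=2K(\alpha)=2|\Gamma(-\alpha)|\sin(\pi\alpha/2)\le 2\bigl(\tfrac{1}{1-\alpha}+2\bigr)\le\tfrac{6}{1-\alpha}$. Your integration by parts on $[1,\infty)$ also justifies the $\alpha$-uniform tail bound, which the paper asserts without derivation, and you handle the sign of $\mathbf{S}$ explicitly.
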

\begin{proof}
Notice that since $\cos(r\mathbf{S})/{r|r|^{\alpha}}$ is an odd function $r$ while $\sin(r\mathbf{S})/{r|r|^{\alpha}}$ is an even function in $r$, we readily have
\begin{align*}
 \PV\int_{\R}\frac{\exp(i r \mathbf{S})}{r|r|^{\alpha}}= 2\PV\int_{0}^{+\infty}\frac{\sin(r \mathbf{S})}{r|r|^{\alpha}}=2\PV\int_{0}^{+\infty}\frac{\sin(r \mathbf{S})}{r^{\alpha+1}}.
\end{align*}
Observe that the bound \eqref{bound on PV} holds trivially for $\mathbf{S}=0$, without loss of generality we assume that $\mathbf{S}\neq0$. Now for any $\varepsilon>0$, we split the integral as follows.
\begin{align}\label{approx integral PV}
\int_{0}^{+\infty}\frac{\sin(r \mathbf{S})}{r^{\alpha+1}}=\int_{0}^{\varepsilon}\frac{\sin(r \mathbf{S})}{r^{\alpha+1}}dr+\int_{\varepsilon}^{+\infty}\frac{\sin(r \mathbf{S})}{r^{\alpha+1}}dr.
\end{align}
For the first integral on the right side of \eqref{approx integral PV},
\begin{align*}
\left|\int_{0}^{\varepsilon}\frac{\sin(r \mathbf{S})}{r^{\alpha+1}}dr\right|&\le 2|S|\int_{0}^{\varepsilon}\frac{dr}{r^\alpha}\\
&\le 2|S|\frac{\varepsilon^{1-\alpha}}{1-\alpha},
\end{align*}
and for the second integral on the right side of \eqref{approx integral PV}, we perform a change of variable $u=r\mathbf{S}$ to obtain
\begin{align*}
\left|\int_{\varepsilon}^{+\infty}\frac{\sin(r \mathbf{S})}{r^{\alpha+1}}dr\right|=\left|\int_{\varepsilon S}^{+\infty}\frac{\sin(u)}{u^{\alpha+1}}\frac{1}{\mathbf{S}^{-(1+\alpha)}\mathbf{S}}du\right|\le |\mathbf{S}|^\alpha\frac{|\varepsilon\mathbf{S}|^\alpha}{1-\alpha}.
\end{align*}
We choose $\varepsilon=\mathbf{S}^{-1}$ and $C(\alpha)=\frac{6}{1-\alpha}$, then the bound \eqref{bound on PV} follows.
\end{proof}
\begin{rem}
As proved in \cite{constantin2016muskat}, for the case $\alpha=0$, we also have
\begin{align*}
\PV\int_{\R}\frac{\exp(i r \mathbf{S})}{r|r|^{\alpha}}=\pi i\sgn(\alpha). 
\end{align*}
Hence by choosing
\begin{align*}
C(\alpha)=\left \{
\begin{array}{lllll}
\frac{6}{1-\alpha},~~~~~&\alpha\in(0,1),\\
\pi,~~~~~&\alpha=0,\\
\end{array} \right.
\end{align*}
we can see that \eqref{bound on PV} is indeed valid for all $\alpha\in[0,1)$.
\end{rem}
Next, we show that the constant $k_0(\alpha)$ in Theorem~\ref{global existence thm} exists, which is stated in the following lemma.
\begin{lemma}\label{existence of k0 alpha lem}
For any $\alpha\in(0,1)$, there exists $k_0(\alpha)>0$ such that for all $0 \leq z < k_{0}(\alpha),$ we always have
\begin{equation}\label{condition for k(alpha) with x}
\left[ \frac{(10 + 4 \alpha) z^2 + (2 + \alpha) z^4}{(1 - z^2)^{\frac{7 + \alpha}{2}}} -1\right] < \frac{1}{2C(\alpha)},
\end{equation}
where $C(\alpha)>0$ is given in Lemma~\ref{estimates on PV lem}.
\end{lemma}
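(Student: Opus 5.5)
The plan is a continuity argument at $z=0$ for the left-hand side of \eqref{condition for k(alpha) with x}. Set
\begin{align*}
F_\alpha(z)=\frac{(10+4\alpha)z^2+(2+\alpha)z^4}{(1-z^2)^{\frac{7+\alpha}{2}}}-1,\qquad z\in[0,1).
\end{align*}
The numerator is continuous on $\R$, and the denominator is continuous and strictly positive on $[0,1)$. Hence $F_\alpha$ is continuous on $[0,1)$. At the origin we get $F_\alpha(0)=0-1=-1<0<\frac{1}{2C(\alpha)}$, where $C(\alpha)=\frac{6}{1-\alpha}>0$ is the constant from Lemma~\ref{estimates on PV lem}.

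I will also show that $F_\alpha$ is nondecreasing on $[0,1)$. This makes the conclusion hold for every $z$ below a threshold, not merely near $0$. Monotonicity follows because:
\begin{itemize}
\item the numerator $(10+4\alpha)z^2+(2+\alpha)z^4$ is nonnegative and nondecreasing in $z\ge0$;
\item the factor $(1-z^2)^{-\frac{7+\alpha}{2}}$ is positive and increasing on $[0,1)$;
\item a product of nonnegative nondecreasing functions is nondecreasing.
\end{itemize}

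Since $F_\alpha(z)\to+\infty$ as $z\to1^-$ while $F_\alpha(0)<\frac{1}{2C(\alpha)}$, the intermediate value theorem gives a smallest point $z_1\in(0,1)$ with $F_\alpha(z_1)=\frac{1}{2C(\alpha)}$. I then set $k_0(\alpha)=z_1$. By minimality of $z_1$ and continuity, every $0\le z<k_0(\alpha)$ satisfies $F_\alpha(z)<\frac{1}{2C(\alpha)}$. Monotonicity is in fact not needed for this step; it only identifies the set where the inequality holds as an interval.

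For an explicit constant, one could instead bound $F_\alpha(z)\le 2^{\frac{7+\alpha}{2}}(12+5\alpha)z^2-1$ for $z^2\le\frac12$. This yields an explicit admissible choice of $k_0(\alpha)$, capped at $\frac{1}{\sqrt2}$.

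No real obstacle is expected. The only point to check is that $C(\alpha)$ is finite and positive for $\alpha\in(0,1)$, so that the target $\frac{1}{2C(\alpha)}$ is a positive number strictly above $F_\alpha(0)$; this is immediate from the formula for $C(\alpha)$ in Lemma~\ref{estimates on PV lem}.
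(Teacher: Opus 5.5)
Your argument is correct for the statement as written, but it takes a different route from the paper.

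\textbf{The paper's route.} The paper never works with the closed form directly. It uses the identity \eqref{power series for an} to replace the left-hand side by $\sum_{n\ge1}a_n(2n+1)^2z^{2n}$. It then proves the series bound \eqref{equiv form of k_0 alpha} by splitting the series into two parts: a finite head, made small through the factor $k_0(\alpha)^2$, and a tail controlled by Stirling's formula. The tail estimate in fact needs $k_0(\alpha)$ kept away from $1$ first, since the tail is not small uniformly for $k_0(\alpha)\in(0,1)$.

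\textbf{Comparison.} Your continuity/intermediate-value argument is shorter and needs no asymptotics. It also gives both the optimal threshold $z_1$ and an explicit one. What the paper's route buys is that it lands directly on \eqref{condition on k0}, which is the form actually used in Lemma~\ref{estimate on non-linear N lem}.

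\textbf{Caveat about $F_\alpha(0)=-1$.} The fact you exploit, $F_\alpha(0)=-1$, is an artifact of a misprint in \eqref{power series for an}. Redoing the two differentiations gives
\begin{align*}
\sum_{n=1}^{\infty}a_n(2n+1)^2z^{2n}=\frac{1+(10+4\alpha)z^2+(2+\alpha)^2z^4}{(1-z^2)^{\frac{7+\alpha}{2}}}-1.
\end{align*}
This expression vanishes at $z=0$ and exceeds your $F_\alpha(z)$ by at least $1$ on $[0,1)$. So the literal inequality in the lemma does not imply \eqref{equiv form of k_0 alpha}, which is what the later argument needs.

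Your method transfers verbatim to the corrected expression. It is continuous on $[0,1)$ and equals $0<\frac{1}{2C(\alpha)}$ at the origin. It is also nondecreasing, since its coefficients are nonnegative. Applied there, your argument yields exactly the condition needed downstream.
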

\begin{proof}
Notice that by the identity \eqref{power series for an}, we have
\begin{align*}
 \frac{(10 + 4 \alpha) z^2 + (2 + \alpha) z^4}{(1 - z^2)^{\frac{7 + \alpha}{2}}} -1 = \sum_{n=1}^{\infty}  \frac{\Gamma\left(\frac{3 + \alpha}{2} + n\right)}{\Gamma\left(\frac{3 + \alpha}{2}\right)} \cdot \frac{1}{n!} (2n + 1)^{2} z^{2n},
\end{align*}
and hence it suffices to show there exists $k_0(\alpha)>0$ such that 
\begin{align}\label{equiv form of k_0 alpha}
2C(\alpha)\sum_{n=1}^{\infty}  \frac{\Gamma\left(\frac{3 + \alpha}{2} + n\right)}{\Gamma\left(\frac{3 + \alpha}{2}\right)} \cdot \frac{1}{n!} (2n + 1)^{2} k_0(\alpha)^{2n}<1.
\end{align}
For each $N\in\mathbb{N}$, we first split the summation as follows.
\begin{align*}
2C(\alpha)\sum_{n=1}^{\infty}  \frac{\Gamma\left(\frac{3 + \alpha}{2} + n\right)}{\Gamma\left(\frac{3 + \alpha}{2}\right)} \cdot \frac{1}{n!} (2n + 1)^{2} k_0(\alpha)^{2n}=S_1+S_2,
\end{align*}
where $S_1$ and $S_2$ are given by
\begin{align*}
S_1&=2C(\alpha)\sum_{n=1}^{N-1}  \frac{\Gamma\left(\frac{3 + \alpha}{2} + n\right)}{\Gamma\left(\frac{3 + \alpha}{2}\right)} \cdot \frac{1}{n!} (2n + 1)^{2} k_0(\alpha)^{2n},\\
S_2&=2C(\alpha)\sum_{n=N}^{\infty}  \frac{\Gamma\left(\frac{3 + \alpha}{2} + n\right)}{\Gamma\left(\frac{3 + \alpha}{2}\right)} \cdot \frac{1}{n!} (2n + 1)^{2} k_0(\alpha)^{2n}.
\end{align*}
Since $\Gamma(x)\approx x^{x-\frac{1}{2}}e^{-x}$, there exists positive constants $c_1$ and $c_2$ such that for all $n\ge N$ with $N\in\mathbb{N}$,
\begin{align*}
\Gamma(\frac{3+\alpha}{2}+n)&\le c_1(\frac{3+\alpha}{2}+n)^{n+1+\frac{\alpha}{2}}e^{-(\frac{3+\alpha}{2}+n)},\\
c_2\sqrt{n}\cdot(\frac{n}{e})^n&\le n!.
\end{align*}
Therefore for each $N\in\mathbb{N}$, we have
\begin{align*}
S_2&\le \frac{2C(\alpha)}{\Gamma(\frac{3 + \alpha}{2})}\frac{c_1}{c_2}\sum_{n=N}^\infty(2n+1)^2(n+\frac{3+\alpha}{2})^{n+1+\frac{\alpha}{2}}e^{-(n+\frac{3+\alpha}{2})}\frac{1}{\sqrt{n}}(\frac{n}{e})^{-n}k_0(\alpha)^{2n}\\
&\le \frac{2C(\alpha)}{\Gamma(\frac{3 + \alpha}{2})}\frac{c_1}{c_2}\sum_{n=N}^\infty(2n+1)^2(n+\frac{3+\alpha}{2})^{1+\frac{\alpha}{2}}\frac{1}{\sqrt{n}}(1+\frac{3+\alpha}{2n})^n e^{-\frac{\alpha+3}{2}}k_0(\alpha)^{2n}.
\end{align*}
For sufficiently large $n$, we have $(1+\frac{3+\alpha}{2n})^n\approx e^{-\frac{\alpha+3}{2}}$, and $\sum_{n=1}^\infty n^p a^n<\infty$ for all $a\in(0,1)$ and $p\ge1$. Therefore, there exists $N\in\mathbb{N}$ such that for all $k_0(\alpha)\in(0,1)$, it implies that
\begin{align}\label{bound on S2}
S_2&\le \tilde C(\alpha)\sum_{n=N}^\infty n^{\frac{5}{2}+\frac{\alpha}{2}}k_0(\alpha)^{2n}<\frac{1}{2},
\end{align}
where $\tilde C(\alpha)$ is a positive constant which depends on $C(\alpha)$, $c_1$, $c_2$ and $\alpha$. We fix $N$ such that \eqref{bound on S2} holds, then there exists $k_0(\alpha)\in (0,1)$ such that $S_1$ can be bounded by
\begin{align}\label{bound on S1}
S_1\le 2k_0(\alpha)^2C(\alpha)\sum_{n=1}^{N-1}  \frac{\Gamma\left(\frac{3 + \alpha}{2} + n\right)}{\Gamma\left(\frac{3 + \alpha}{2}\right)} \cdot \frac{1}{n!} (2n + 1)^{2}<\frac{1}{2}.
\end{align}
Combining the bounds \eqref{bound on S2} and \eqref{bound on S2}, we can see that there exists $k_0(\alpha)\in (0,1)$ such that
\begin{align*}
S_1+S_2<1,
\end{align*}
hence the bound \eqref{equiv form of k_0 alpha} follows as well.
\end{proof}


{\section*{Acknowledgment}
We thank the anonymous referees for their helpful comments. Q. Khan and A. Suen are partially supported by Hong Kong General Research Fund (GRF) grant project number 18300622. B.Q. Tang is supported by NAWI Graz.}

\bibliographystyle{amsalpha}

\bibliography{References_for_active_scalar}

\end{document}